\theoremstyle{definition}
\newtheorem{theorem}{Theorem}[section]
\newtheorem{prop}[theorem]{Proposition}
\newtheorem{lemma}[theorem]{Lemma}
\newtheorem{corollary}[theorem]{Corollary}
\newtheorem{rmk}[theorem]{Remark}
\theoremstyle{definition}
\newtheorem{main}{Theorem}
\newtheorem{definition}[theorem]{Definition}
\numberwithin{equation}{section}
\def\R{\mathbb{R} }
\def\RR{\mathbb{R}}
\def\ZZ{\mathbb{Z}}
\def\vphi{\varphi}
\def\del{\partial}
\def\udel{\overline{\del}}
\def\ldel{\underline{\del}}
\def\vol{\text{vol}}
\DeclareMathOperator{\Int}{Int}
\DeclareMathOperator{\sgn}{sgn}
\DeclareMathOperator{\ra}{{\rightarrow}}
\date{\today}
\subjclass[2010]{}
\title[]{Length averages for codimension one foliations}
\author[Asaoka]{Masayuki Asaoka}
\address[Masayuki Asaoka]{Faculty of Science and Engineering, Doshisha University, Kyoto, 610-0394, JAPAN}
\email{masaoka@mail.doshisha.ac.jp}
\author[Nakano]{Yushi Nakano}
\address[Yushi Nakano]{Graduate School of Science, Hokkaido University, Hokkaido, 060-0810, Japan}
\email{yushi.nakano@math.sci.hokudai.ac.jp}
\author[Varandas]{Paulo Varandas}
\address[Paulo Varandas]{CMUP, Faculdade de Ci\^encias da Universidade do Porto \& Departamento de Matem\'atica e 
Estat\'istica, Universidade Federal da Bahia, BRAZIL}
\email{paulo.varandas@ufba.br}
\author[Yokoyama]{Tomoo Yokoyama}
\address[Tomoo Yokoyama]{Graduate School of Science and Engineering, Saitama University, Saitama, 338-8570, JAPAN}
\email{tyokoyama@rimath.saitama-u.ac.jp}
\keywords{Codimension one foliation; irregular behavior; non-amenable group action;  interval exchange transformations; 3-manifold; manifold with corners; equidistant property}
\begin{document}

\begin{abstract}
In this paper we study geometrical and dynamical properties of codimension one foliations, by exploring a relation between length averages and ball averages of certain group actions.
We introduce a new mechanism, which relies on the group structure itself, to obtain irregular behavior of ball averages for certain non-amenable group actions. 
Several geometric realization results show that any such groups can appear connected with the topology of leaves which are connected sums of plugs with a special geometry, namely nearly equidistant boundary components. 
This is used to produce the first examples of codimension one $\mathcal C^\infty$ regular foliations on a compact Riemannian manifold $M$ for which the length average of some continuous function does not exist on a non-empty open subset of $M$. 
\end{abstract}

\maketitle
\section{Introduction}\label{intro}

The study of foliations has attracted the attention of many mathematicians in the past decades due to the richness of their dynamical features and connections with several areas in mathematics. Several approaches have been taken to fully understand foliations, which use their geometrical, topological, dynamical, and algebraic aspects.
Indeed, a smooth regular foliation on a compact Riemannian manifold has a finitely generated holonomy pseudo-group $G$ (determined by return maps to local transversal sections and unique up to isomorphism) which is in parallel to Poincar\'e maps of a smooth flow. 
In particular, this transversal dynamics suggested the study of recurrence for the orbits of points under holonomy pseudo-groups and led to the concept of resilient leaves (see e.g. \cite{GLW,sacksteder1965foliations}). 
Thus, it is natural to des\-cribe the relation between the topology of the manifold, the algebraic properties of such holonomy pseudo-group and geometric properties of the leaves of the foliation, which are naturally endowed with a complete Riemannian metric.
This program has been pursued since the fifties, inspired by classification problems and questions concerning the growth of volume on leaves of the foliation,  and much is known especially in the codimension one setting. 
For instance, for a $\mathcal C^2$ codimension one foliation on a compact manifold, the existence of a resilient leaf is equivalent to the positivity of the geometric entropy of the foliation (cf.~\cite{GLW}), and ensures the existence of a leaf in which the volume of balls grows exponentially fast with the radius \cite{Du}. 
Yet, the general picture seems incomplete and there are several open problems and conjectures. We refer the reader to  \cite{candel2000foliations,candel2000foliationsb,Hu1,Hu2, Wal} and references therein for very complete expositions on foliation theory.

In this paper, we aim to bridge some ideas from ergodic theory and dynamical systems to foliation theory to characterize the complexity of foliations. 
In opposition to the context of classical dynamical systems, where a one-dimensional foliation is generated by the orbits of a smooth flow, there are two sources of complexity in the dynamics of foliations. 
The first arises from the fact that the transversal dynamics is originated from a holonomy pseudo-group, which often fails to preserve invariant measures.
The second is due to the fact that there exists no time parameterization of the leaves of the foliation, which leads to parameterization of them by the Riemannian metric.
Related to the second, recently the concept of the length average of foliations was introduced in \cite{nakano2019existence} to investigate the complexity of foliations from the viewpoint of ergodic theory. 
Given a singular foliation $\mathcal F$ on a compact Riemannian manifold $M$, 
 the \emph{length average} of a continuous function $\varphi :M\to \mathbb R$ at a non-singular point $x\in M$ for $\mathcal F$ is given by
\begin{equation}\label{eq:maindef}
\lim _{r\to\infty} \frac{1}{\vert B_r^{\mathcal F}(x) \vert } \int _{B_r^{\mathcal F}(x)} \varphi (y) dy,
\end{equation}
where $B_r^{\mathcal F}(x)$ is the ball of radius $r>0$ at $x$ on the leaf $\mathcal F(x)$ of the foliation passing through $x$ and $\vert B_r^{\mathcal F}(x) \vert $ is the volume of $B_r^{\mathcal F}(x)$ with respect to the induced metric on $\mathcal F(x)$. 
This is a natural generalization of the time average of $\varphi$ at $x$ by a flow, that depends on the Riemannian metric, and can be thought as a refinement of the problem of finding leaves with exponential volume growth considered in \cite{Du}.

The non-existence of length averages (called \emph{historic behavior} or \emph{irregular behavior}) is known to be rare for smooth foliations on surfaces. 
Indeed, given a compact surface, length averages exist everywhere for all continuous functions for `almost all'   codimension one $\mathcal C^1$ non-degenerate singular foliations, all $\mathcal C^r$ non-degenerate singular foliations with $r>1$ and all $\mathcal C^r$ regular foliations with $r\ge 1$ (see \cite{nakano2019existence} for the precise statements). 
The underlying mechanism is that the one-dimensionality of the leaves of the foliation allows one to relate the length averages ~\eqref{eq:maindef} with C\`esaro averages of an integrated observable, a fact that does not occur for foliations whose leaves have dimension larger than one.
In particular, the following questions appear naturally:
\begin{itemize}
\item[$\diamond$] Do length averages exist everywhere for any codimension one singular foliation on a compact Riemannian manifold?
\smallskip
\item[$\diamond$] Do length averages exist on a Baire residual set for any codimension one foliation on a compact Riemannian manifold?
\smallskip
\item[$\diamond$] Do length averages exist Lebesgue everywhere for any codimension one foliation on a compact Riemannian manifold?
\end{itemize}
\smallskip

We remark that if we remove the compactness, the non-degeneracy or the codimension one property, every question can be answered negatively in a strong way (cf.~\cite[Problem 3]{nakano2019existence} for a more detailed discussion). 
In dynamical systems, the non-existence of time averages is known to arise mostly from complicated structures of the space of ergodic measures. 
 However, due to the possible non-existence of invariant measures for the holonomy pseudo-group, one may expect the non-existence of length averages to occur as a consequence of rich 
geometric and algebraic structures of foliations.
The purpose of the present paper is to provide partial answers to the previous questions and, as a byproduct,  to shed some light on a new mechanism to produce irregular behavior for group actions.
Indeed, given a surface obtained by gluing pants with nearly equidistant boundaries as in Figure \ref{fig:1} below, one can write the length averages of a special class of continuous observables as a ball average 
$$
\frac{1}{\vert G_n\vert } \sum _{a\in G_n} \psi (f_a(x))
$$
associated to a continuous observable $\psi$ and the group $G$ generated by the finite number of gluing maps on the circle, where $G_n$ denotes the ball of radius $n$ centered at the identity in $G$ (see Section~\ref{sec:main} for precise definition).
This allows us to build a bridge between length averages for foliations and ball (or spherical) averages for group actions, whose existence was considered in the context of pointwise ergodic theorem by Nevo-Stein, Lindenstrauss and  Bufetov among others (see \cite{nevo1994generalization,lindenstrauss2001pointwise,bufetov2002convergence} and references therein).

 \begin{figure}[htb]
\begin{center}
\includegraphics[scale=0.15]{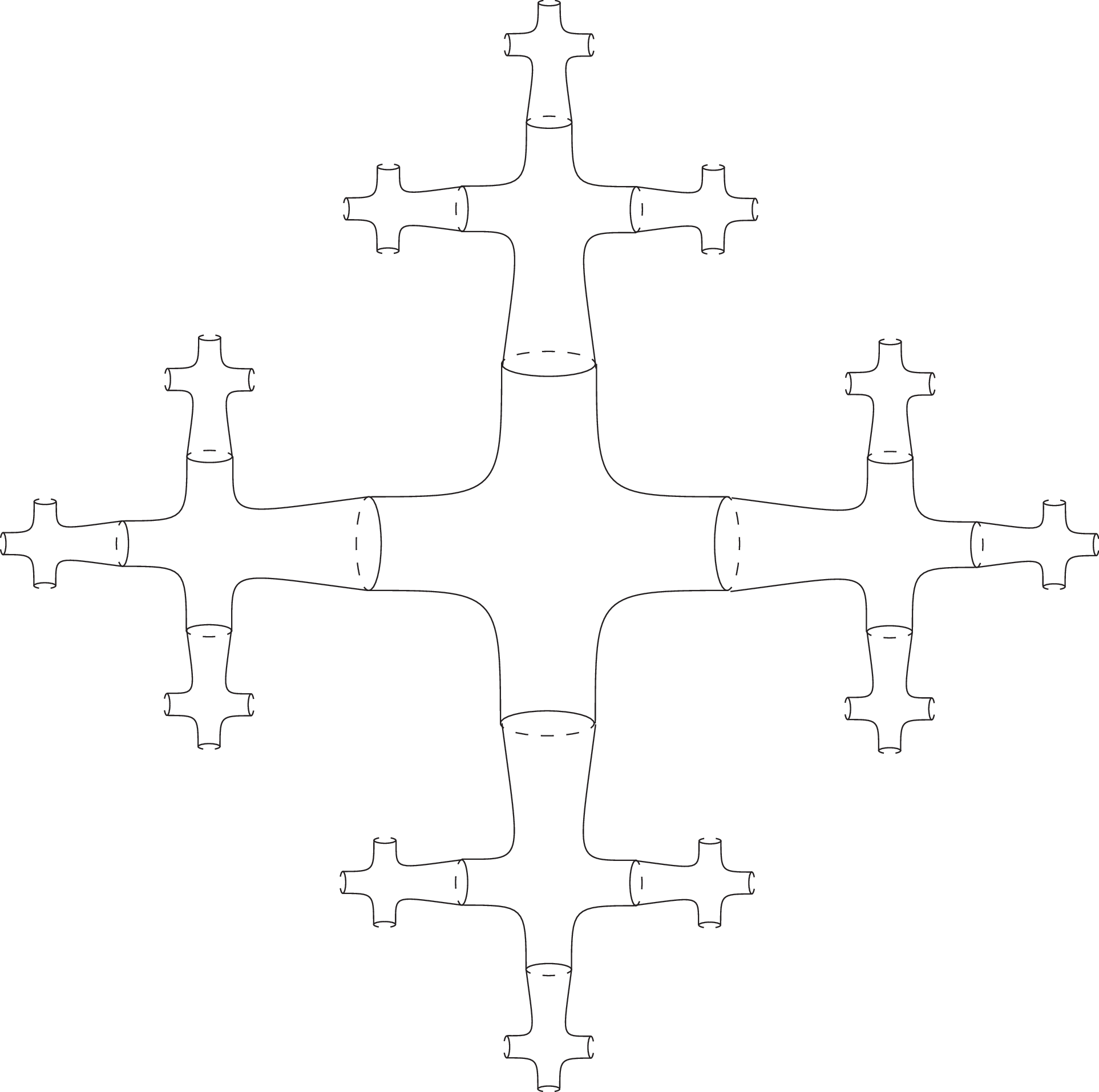}\includegraphics[scale=0.5]{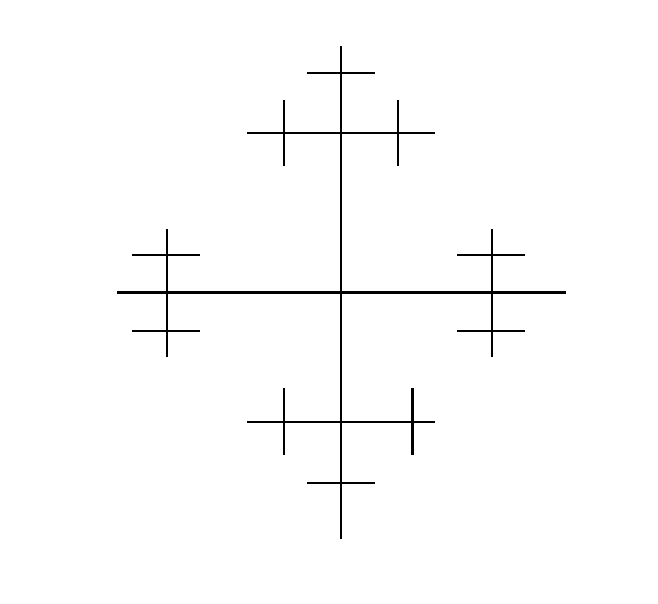}
\caption{A leaf of the foliation formed as a suspension of pants with nearly equidistant boundaries over the group action by the free group $F_2$ (on the left); Cayley graph of $F_2$ (on the right) }
\label{fig:1}
\end{center}
\end{figure}

If, in addition, the group $G$ is non-amenable or has a large boundary property (to be defined later), one can construct an open set (in particular, a positive Lebesgue measure set)  of points for which length averages do not exist. 
However, due to the uniqueness of geodesic rays, there exist no smooth pants with equidistant boundaries (cf.~Proposition~\ref{prop:2.2}).
Notwithstanding, we prove that the previous scheme does work for surfaces with corners and equidistant boundaries (Theorems~\ref{prop:pieces} and item (2) in Theorem~\ref{mainthm:4}) and $\mathcal C^\infty$ foliated manifolds where the leaves have nearly equidistant boundaries (Theorem~\ref{mainthm:2}).
Alternatively, if the group $G$ is amenable or has a small boundary property then length averages exist if and only if the ball averages of integrated observables exist as well (item (1) in Theorem~\ref{mainthm:4}).
Moreover, using suspensions of interval exchange maps, one can build an example of a $\mathcal C^1$ singular foliation on a surface such that the transversal dynamics has two ergodic invariant measures with dense basins of attraction, a condition which ensures that Baire typical orbits have irregular behavior (Theorem~\ref{mainthm:1}).

\section{Main results}\label{sec:main}

\subsection{Irregular behavior of length averages}
In this section, we state our main results, which provide answers to the main questions raised in the introduction. 
 Our first result is the following negative answer to the first and second questions of the introduction in the two-dimensional case.
 Recall that a subset of a topological space $U$ is said to be residual if it contains a countable intersection of open and dense subsets of $U$.
 Refer also to \cite{Stefan74} for the definition of non-degenerate singular foliations.

\begin{main}\label{mainthm:1}
There are a codimension one $\mathcal C^\infty$ non-degenerate singular foliation $\mathcal F$ on a compact surface $M$ and a continuous function on $M$ whose length averages for $\mathcal F$ do not exist on a residual subset of $M$.
\end{main}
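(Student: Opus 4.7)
The plan is to produce $\mathcal F$ as the orbit foliation of a smooth flow on a compact surface whose first-return map on a transversal is a minimal, non-uniquely ergodic interval exchange transformation (IET). The residual set of irregular points will combine two ingredients: a reduction of length averages of a one-dimensional singular foliation to Birkhoff averages of an induced observable on the transversal, together with a Baire-category argument yielding residual irregularity whenever the transversal dynamics admits two ergodic invariant probabilities with dense basins.

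I would fix a minimal IET $T:I\to I$ carrying two distinct ergodic invariant probabilities $\mu_1\ne\mu_2$; such IETs were constructed by Keane and Keynes--Newton and arise as Poincar\'e maps of the vertical translation flow $\Phi_t$ on a translation surface. The associated compact surface $M$ then carries a $\mathcal C^\infty$ vector field with conical saddle singularities, non-degenerate in the sense of Stefan, and I take $\mathcal F$ to be its orbit foliation. For $\varphi\in\mathcal C(M)$ I define the transversal observable
\[
\Psi(y)=\int_0^{\tau(y)}\varphi(\Phi_s y)\,ds,\qquad y\in I,
\]
where $\tau:I\to\mathbb R_{>0}$ is the return time. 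A Kac-type estimate, using that the leaves of $\mathcal F$ are parametrized by arc length proportional to flow time on bounded pieces, shows that the length average of $\varphi$ at a non-singular $x\in M$ exists if and only if the Birkhoff average $A_n(y):=\frac{1}{n}\sum_{k=0}^{n-1}\Psi(T^ky)$ exists at the corresponding transversal hit $y\in I$, and the two limits are then proportional. I would choose $\varphi$ so that $\alpha_i:=\int\Psi\,d\mu_i$ satisfies $\alpha_1<\alpha_2$, which is possible since $\mu_1\ne\mu_2$.

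The $G_\delta$ set
\[
E=\bigl\{y\in I:\liminf_{n\to\infty}A_n(y)\le\alpha_1\ \text{and}\ \limsup_{n\to\infty}A_n(y)\ge\alpha_2\bigr\}
\]
consists of points where $A_n$ diverges, and by minimality of $T$ each basin $B(\mu_i)\subset I$ is a non-empty $T$-invariant set containing a dense orbit, hence is dense. To show $E$ is dense, given $x_0\in I$ and $\epsilon>0$ I would build inductively a nested sequence of non-empty open intervals $U_j\subset U_{j-1}\subset B_\epsilon(x_0)$ and integers $n_j\uparrow\infty$ such that every $y\in U_j$ satisfies $A_{n_j}(y)<\alpha_1+1/j$ for odd $j$ and $A_{n_j}(y)>\alpha_2-1/j$ for even $j$. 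To pass from $U_j$ to $U_{j+1}$ I pick $z\in U_j\cap B(\mu_i)$ in the appropriate basin (non-empty by density), take $n_{j+1}$ so large that $A_{n_{j+1}}(z)$ is within $1/(2(j+1))$ of $\alpha_i$, and shrink to an open neighborhood of $z$ on which $A_{n_{j+1}}$ is nearly constant---possible since $A_{n_{j+1}}$ is continuous outside the finitely many $T^k$-discontinuities with $k<n_{j+1}$. Then $\bigcap_j U_j\ne\emptyset$ is contained in $E\cap B_\epsilon(x_0)$, $E$ is residual, and its pre-image under the continuous hit map of the transversal is a residual subset of $M$ on which $\varphi$ has no length average. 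The main obstacle is the splicing step: one must simultaneously ensure each $U_j$ is non-empty, sits in $U_{j-1}$, avoids the finite set of discontinuity images of $T,\ldots,T^{n_j}$, and is small enough that $A_{n_j}$ stays within the required bound---a delicate but standard countable-avoidance construction relying on the minimality of $T$ and the piecewise-translation structure of IETs.
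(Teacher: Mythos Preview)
Your overall strategy---realize a minimal, non-uniquely ergodic IET as the holonomy of a $\mathcal C^\infty$ singular foliation on a compact surface, reduce length averages along leaves to averages for the induced one-dimensional dynamics, and then run a Baire-category argument using the two dense ergodic basins---is exactly the route the paper takes. The paper's execution differs from yours in two places, and one of those differences exposes a genuine gap in your reduction step.

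First, on the construction of $(M,\mathcal F)$: the paper does \emph{not} appeal to translation surfaces. It builds the foliated surface by hand from copies of an explicit local model with a hyperbolic saddle, smoothing and doubling so that all singularities are standard non-degenerate saddles and the ambient Riemannian metric is smooth. Your translation-surface shortcut is attractive, but you must then choose a \emph{smooth} Riemannian metric on $M$ (the flat metric $|\omega|^2$ is singular at the cone points), and you should say why the prong singularities of the vertical foliation are non-degenerate in Stefan's sense. None of this is fatal, but it is work you are suppressing.

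Second---and this is the real issue---your ``Kac-type'' reduction is not correct as stated. The length average along a one-dimensional leaf is a \emph{two-sided} arc-length average, and in transversal coordinates it is a \emph{ratio}
\[
\frac{\displaystyle\sum_{k=-n}^{n-1}\Psi(T^ky)}{\displaystyle\sum_{k=-n}^{n-1}\tau(T^ky)},
\]
not the one-sided Ces\`aro average $A_n(y)=\tfrac{1}{n}\sum_{k=0}^{n-1}\Psi(T^ky)$. So ``the length average exists iff $A_n$ converges'' fails in general: the $\tau$-average in the denominator may itself oscillate (indeed $\int\tau\,d\mu_1$ and $\int\tau\,d\mu_2$ need not agree), and conversely oscillation of $A_n$ does not automatically force oscillation of the ratio. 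The paper sidesteps this by passing to the suspension \emph{flow} $\{f^t\}$ over the roof $\tau$ and working directly with the two-sided time averages $\tfrac{1}{2r}\int_{-r}^{r}\psi\circ f^t\,dt$: the suspended measures $\hat\mu_1,\hat\mu_2$ are ergodic, non-atomic, fully supported on the flow space, and separated by a suitable $\psi$, so one can invoke the Baire-category criterion (the paper cites \cite{CCSV2021}; your nested-interval argument, transplanted to the flow level, would work equally well) to get a residual set where the two-sided time average diverges. The identification of length averages on $M$ with these two-sided flow averages is then an honest equality, not merely a proportionality.

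In short: keep your strategy, but replace the reduction to $A_n$ on $I$ by a reduction to two-sided time averages of the suspension flow, and either carry out the paper's explicit smooth-surface construction or justify carefully that the translation-surface picture yields a $\mathcal C^\infty$ non-degenerate singular foliation on a smooth compact Riemannian surface.
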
 

This complements  \cite{nakano2019existence}, where it was shown that for any codimension one $\mathcal C^r$ non-degenerate singular foliation on a compact surface with $r\ge 1$, the length average of any continuous function exists Lebesgue almost everywhere. 
In other words, the residual set in Theorem \ref{mainthm:1} is of zero Lebesgue measure, and the answer to the third question of the introduction in the two-dimensional case is positive. 
 In strong contrast to it, when $M$ is a 3-manifold, the answers to all the questions of the introduction are negative even in  the regular  foliations category:
   
\begin{main}\label{mainthm:2}
There are a codimension one $\mathcal C^\infty$ regular foliation on a compact three-dimensional manifold $M$ and a continuous function on $M$ whose length averages do not exist on a non-empty open set of $M$.
 \end{main}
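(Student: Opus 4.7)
The plan is to realize the $\mathcal{C}^\infty$ foliation as a suspension-like construction over the Cayley graph of a non-amenable finitely generated group, arranged so that length averages on leaves translate into ball averages for the group action, and then exploit non-amenability to obtain an open set of points of irregular behavior. Concretely, I would take $G = F_2$ with its standard symmetric generating set $S$ of size four, and attach a copy of a fixed "plug" $P$ at each vertex of the Cayley graph, gluing pairs of boundary components via the generating maps. The plug $P$ will be a compact three-manifold carrying a codimension-one $\mathcal{C}^\infty$ foliation whose generic leaf is a compact surface with several boundary circles. The critical geometric requirement, handed to us by the preparatory Theorem~\ref{mainthm:2} about foliated manifolds with nearly equidistant boundaries, is that the boundary circles of each leaf lie at essentially the same distance from a fixed basepoint, up to a uniformly bounded error.

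The construction has two stages. First, I would invoke the realization result (Theorem~\ref{mainthm:2} in the paper's plug-construction sense, as advertised in the introduction) to produce such a plug $P$: a compact three-dimensional manifold with a $\mathcal{C}^\infty$ codimension-one foliation $\mathcal{F}_P$, with four distinguished boundary transversal circles corresponding to the generators of $F_2$, and with the property that on each leaf the distances from an origin to the four boundary circles agree up to a uniform constant. Second, I would build the ambient closed three-manifold $M$ by a quotient construction: start with the disjoint union of copies $P_a$ indexed by $a \in F_2$ and, for each generator $s \in S$, glue the boundary circle of $P_a$ corresponding to $s$ to the matching boundary circle of $P_{as}$ via the prescribed holonomy diffeomorphism of the circle. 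Doing this equivariantly over $F_2$ and then dividing by a cocompact action (or equivalently realizing this as a suspension of the $F_2$-action on the circle over a finite CW-model), we obtain a compact smooth $M$ whose foliation $\mathcal{F}$ has each leaf isometric to a connected sum of plugs indexed by a Cayley graph of $F_2$, with nearly equidistant boundaries.

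The next step is the analytic one. I would choose a continuous observable $\varphi$ on $M$ that is concentrated on a small transversal region and, on that region, can be pulled back to a function $\psi$ on the circle of the form $\psi = \mathbf{1}_A - \mathbf{1}_B$ for two disjoint arcs whose orbits under the holonomy group have the same sort of dichotomy that drives the non-amenable ball-average failure. Because the leaves have nearly equidistant boundaries, the ball $B_r^\mathcal{F}(x)$ of radius $r$ on a leaf consists, up to a bounded error in $r$, of the union of plugs indexed by the group ball $G_n$ of radius $n = n(r)$. This is exactly the translation mechanism: the length average of $\varphi$ at $x$ equals, up to an asymptotically vanishing error,
\begin{equation*}
\frac{1}{|G_n|}\sum_{a \in G_n} \psi(f_a(x_0)),
\end{equation*}
where $f_a$ denotes the holonomy composition along a word representing $a$ and $x_0$ the transversal base. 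Once this identification is secured, I would invoke the ball-average result for non-amenable groups (formulated in the paper as part of the mechanism stated in the introduction and applied in item (2) of Theorem~\ref{mainthm:4}): for a suitable $\psi$, there is a non-empty open set of $x_0$ on the circle for which the ball averages oscillate between two distinct accumulation values as $n \to \infty$. Pulling this back through the suspension gives a non-empty open subset of $M$ on which the length averages of $\varphi$ diverge.

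The main obstacle will be the plug construction itself: producing, in the smooth category on a three-manifold, a foliation with boundary circles that are simultaneously nearly equidistant on each leaf and compatible with the prescribed circle diffeomorphisms used in the gluings. The two-dimensional analogue fails smoothly (Proposition~\ref{prop:2.2}), which is why corners were required in item (2) of Theorem~\ref{mainthm:4}; the point is that the extra dimension gives enough room to average out the equidistance condition into a smooth one. Granting the plug exists with the advertised properties, the rest of the argument is a careful but routine bookkeeping of how volumes of leaf balls grow in step with word length in $F_2$, and an application of the non-amenable ball-average divergence to produce the desired continuous $\varphi$ and open subset of $M$.
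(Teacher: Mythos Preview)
Your high-level picture---build $M$ as a suspension of a circle action by a non-amenable group so that leaf balls track Cayley-graph balls, then exhibit an observable whose averages oscillate---is the right shape, and it matches the paper's strategy in spirit. But there is a genuine gap in the plan at exactly the point you flag as ``the main obstacle,'' and the resolution you propose is incorrect.

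You write that ``the extra dimension gives enough room to average out the equidistance condition into a smooth one.'' It does not: the leaves of a codimension-one foliation on a $3$-manifold are still $2$-dimensional surfaces, and Proposition~\ref{prop:2.2} is a statement about the intrinsic geometry of the leaf, so it obstructs exact equidistance regardless of the ambient dimension. Consequently you cannot simply invoke the mechanism of Theorem~\ref{mainthm:4}(2), which requires genuinely equidistant boundary components; that route is what forces corners in Theorem~\ref{prop:pieces}. The paper's proof does \emph{not} reduce to ball averages $\frac{1}{|G_n|}\sum_{a\in G_n}\psi(f_a(x_0))$ as you propose. Instead it builds, by hand, a model leaf $\Sigma$ from pairs of pants with long legs so that leaf distance is controlled by a height function $h$ up to a bounded additive error $\Delta_*$ (Proposition~\ref{prop:d h}), and then designs an observable $\Phi$ with a built-in odd symmetry ($\Phi\circ I_\Sigma=-\Phi$) and carefully placed support so that $\int_{B(p_0,2kL)}\Phi$ is large while $\int_{B(p_0,2kL-2\Delta_*)}\Phi=0$ exactly (Proposition~\ref{prop:average 1}). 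The oscillation is produced by this symmetry/support trick, not by a generic non-amenable ball-average lemma.

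A second point you gloss over is the holonomy. The paper does not use a free $F_2$-action with four generators; it uses three circle diffeomorphisms $f_A,f_B,f_C$ satisfying a specific ping-pong pattern (each sends the other two intervals into its own), together with an extra condition $f_A(J)\subset I_A$ on a subinterval $J$. This is what guarantees that the leaf through any point of the open set $h_{A,+}\circ h^{-1}((\Delta_*+1,2\Delta_*))\times J$ is globally isometric to the model surface $\Sigma$, so that the Proposition~\ref{prop:average 1} estimate transfers verbatim. Your ``quotient by a cocompact action'' step is not how compactness is achieved; the base is already a closed genus-three surface, and compactness of $M$ comes from the fibration structure, not from quotienting copies of a plug indexed by $F_2$.
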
 
  
The proof of Theorem \ref{mainthm:2} ensures that there is
a $\mathcal C^0$ open set of continuous functions without length average on the open set of Theorem \ref{mainthm:2}.
In fact, 
it will be seen
that there are constants $\overline L,\underline L$ such that $\underline\ell\varphi(x)\le \underline L<\overline L \le \overline\ell\varphi(x)$ on the open set for the continuous function $\varphi$ of Theorem \ref{mainthm:2}, where $\overline\ell\varphi(x)$ and $\underline\ell\varphi(x)$ are defined by \eqref{eq:maindef} with ``$\limsup$'' and ``$\liminf$'' instead of ``$\lim$'' respectively.
Hence, the stronger statement holds due to 
the general observation that 
$\overline\ell\psi(x)-\underline\ell\psi(x)> \frac{\overline\ell\varphi(x)-\underline\ell\varphi(x)}{3}$ for any continuous function $\psi$ whose $\mathcal C^0$ distance to $\varphi$ is less than $\frac{\overline\ell\varphi(x)-\underline\ell\varphi(x)}{3}$.

By extending the foliation in Theorem \ref{mainthm:2}, one can also get a foliation on a compact manifold $M$ with dimension larger than three for which a length average does not exist on an open set of $M$.
  \begin{corollary}\label{cor:y}
 For any integers $d\ge 3$ and $1\le l \le d-2$, there are a codimension $l$ $\mathcal C^\infty$ regular foliation on a compact $d$-dimensional manifold $M$  and a continuous function on $M$ whose length averages do not exist on a non-empty open set of $M$.
 \end{corollary}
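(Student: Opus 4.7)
The plan is to reduce the statement to Theorem \ref{mainthm:2} via a product construction. Write $(N,\mathcal{F}_0,\varphi_0,U_0)$ for the compact 3-manifold, the codimension one $\mathcal{C}^\infty$ foliation, the continuous observable, and the non-empty open set provided by Theorem \ref{mainthm:2} (so that the length average of $\varphi_0$ with respect to $\mathcal{F}_0$ fails to exist on $U_0$).

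Given $d\ge 3$ and $1\le l\le d-2$, set $T_1:=\mathbb{T}^{l-1}$ and $T_2:=\mathbb{T}^{d-l-2}$, with the convention that $\mathbb{T}^0$ is a single point. Equip each with a flat metric, and endow $M:=N\times T_1\times T_2$ with the product Riemannian metric, so that $\dim M=d$. Declare the leaf of $\mathcal{F}$ through $(x,s,t)$ to be $L_x\times\{s\}\times T_2$, where $L_x$ denotes the leaf of $\mathcal{F}_0$ through $x$; this gives a $\mathcal{C}^\infty$ regular foliation of codimension $1+(l-1)=l$. Set $\varphi(x,s,t):=\varphi_0(x)\in C(M)$.

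The induced metric on each leaf is the Riemannian product of $L_x$ and $T_2$, so for $p=(x,s,t)$ and $r>D:=\diam T_2$,
\[
B_r^{\mathcal{F}}(p)=\{(y,s,t')\in L_x\times\{s\}\times T_2 : d_{L_x}(x,y)^2+d_{T_2}(t,t')^2\le r^2\}.
\]
Writing $V(r):=|B_r^{\mathcal{F}_0}(x)|$, $I(r):=\int_{B_r^{\mathcal{F}_0}(x)}\varphi_0\,dy$, and $v_T:=\vol(T_2)$, a Fubini computation together with $|\varphi_0|\le\|\varphi_0\|_\infty$ yields
\[
V(\sqrt{r^2-D^2})\,v_T\le|B_r^{\mathcal{F}}(p)|\le V(r)\,v_T
\]
and
\[
\Bigl|\int_{B_r^{\mathcal{F}}(p)}\varphi-I(\sqrt{r^2-D^2})\,v_T\Bigr|\le\|\varphi_0\|_\infty\bigl(V(r)-V(\sqrt{r^2-D^2})\bigr)v_T.
\]

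Once one shows that $V(r)/V(\sqrt{r^2-D^2})\to 1$ as $r\to\infty$, these bounds imply that the ball average of $\varphi$ at $p$ and the ball average of $\varphi_0$ at $x$ (at the corresponding radius $\sqrt{r^2-D^2}\to\infty$) differ by $o(1)$; hence the upper and lower length averages of $\varphi$ at $p$ coincide with those of $\varphi_0$ at $x$, and so the length average of $\varphi$ fails to exist on the non-empty open set $U_0\times T_1\times T_2\subset M$, as required. The ratio estimate itself follows from $r-\sqrt{r^2-D^2}=D^2/(r+\sqrt{r^2-D^2})\to 0$ combined with a uniform growth bound $V(r+\epsilon)/V(r)\le e^{c\epsilon}$ for small $\epsilon$ and large $r$; this bound follows from Bishop--Gromov volume comparison, using that the leaves of $\mathcal{F}_0$ have uniformly bounded sectional curvature on the compact manifold $N$ (via the Gauss equation), which in particular provides a uniform lower Ricci bound. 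This volume-comparison step is the sole mild technical point; the rest of the argument is a direct Fubini computation on product manifolds.
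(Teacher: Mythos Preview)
Your argument is correct and takes a genuinely different route from the paper's proof.

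The paper also forms a product $\widetilde M=N\times T_1\times T_2$ and pulls back $\varphi_0$ (this is their Proposition~\ref{prop:Tk}), but it avoids any curvature input. Instead of controlling the volume ratio $V(r)/V(r-\epsilon)$, the authors exploit a special feature of the observable produced in Theorem~\ref{mainthm:2}: along the two sequences $r_n^+=2kL$ and $r_n^-=2kL-2\Delta_*$ at which the ball averages are shown to oscillate, $\varphi_0$ vanishes on the entire annulus $B_{r_n^\pm}^{\mathcal F_0}(x)\setminus B_{r_n^\pm-R_1}^{\mathcal F_0}(x)$ for any sufficiently small $R_1$. Choosing the tangent torus with diameter $R_1$ then forces the \emph{integral} over $B_{r_n}^{\widetilde{\mathcal F}}$ to equal exactly $v_T\int_{B_{r_n}^{\mathcal F_0}}\varphi_0=v_T\int_{B_{r_n-R_1}^{\mathcal F_0}}\varphi_0$, so the product ball average is sandwiched between the original averages at radii $r_n$ and $r_n-R_1$; no asymptotic volume estimate is needed.

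What each approach buys: the paper's argument is completely elementary and self-contained, but it is tied to the specific $\varphi_0$ built in Section~\ref{s:6} (and requires going back to Proposition~\ref{prop:average 1} to check the annulus-vanishing condition). Your Bishop--Gromov argument is a black-box reduction that works for \emph{any} codimension-one $C^\infty$ foliation on a compact manifold and \emph{any} continuous observable with oscillating length averages, at the modest cost of invoking a standard comparison theorem and the fact that leaves of a smooth foliation on a compact manifold are complete with uniformly bounded second fundamental form (hence uniformly bounded Ricci curvature via the Gauss equation). Both proofs are valid; yours is more portable, the paper's more hands-on.
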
 
 
 A codimension $l$ foliation without length averages with $l\ge 2$ was already given in \cite[Example 20]{nakano2019existence},  but the construction here is different from it. 
In fact, the foliation in \cite{nakano2019existence} is a suspension flow of a diffeomorphism without time averages, so that the dimension of the foliation is one, while the dimension of the foliation in Corollary \ref{cor:y} is larger one.
The reduction procedure that we employ to prove  Corollary \ref{cor:y} from Theorem \ref{mainthm:2}  can apply to much more general foliations  (see Proposition \ref{prop:Tk}). 

\subsection{Mechanism: length averages and ball averages}
To systematically understand the mechanism of the existence and non-existence of length averages on a positive Lebesgue measure set,\footnote{Since foliated manifolds with corners naturally appear in natural science, such as Moffatt eddies in fluid mechanics (cf.~\cite{Shankar2005, BBTB2011}), this extended question and results in this subsection would be appealing in itself.} 
including the new mechanism of irregular behavior in Theorem \ref{mainthm:2}, we extend the questions of the introduction to a \emph{smooth manifolds with corners} 
(i.e.~a topological space locally modeled on $[0,\infty )^l \times \mathbb R^{d-l}$ with some $2\le l\le d$, as a smooth manifold is locally modeled on $\mathbb R^d$, see e.g.~\cite{Joyce2012} for its precise definition).
 Notice that even in the case $(M,\mathcal F)$ is a foliated smooth Riemannian manifold with corners, the length average in \eqref{eq:maindef} of any continuous function at any non-singular point of $\mathcal F$ is well-defined. 
 We will investigate the relation between the existence of the length averages for the suspension 
 of a group action and a F{\o}lner type property (a small boundary property)
  of the group action.
 This would also explain the idea of the construction of the smooth foliated manifold in Theorem \ref{mainthm:2}.

 Let $G$ be a group defined by generators $G_1= \{ \alpha_1^\pm , \alpha_2^\pm , \ldots , \alpha_k^\pm \}$  (possibly with relations  besides $\alpha _i\alpha _i^{-1} = \alpha _i^{-1}\alpha _i=1$), and let $f$ be a $G$-action on a topological space $Y$ such that $ f(a):Y\to Y$ is a homeomorphism for all $a\in G$. 
  Throughout this paper, we identify an element $a$ in $G$ and the corresponding homeomorphism $f(a)$. 
     
 Let $G_n$ be the ball of radius $n$ at $1$ in $G$ (i.e.~the subset of $G$ consisting of reduced and non-empty words $a=a_1a_2 \cdots a_m$ with $1\leq m\leq n$ and $a_i\in G_1$).
Given a continuous function $\varphi$ on $Y$, the \emph{ball averages}  $\beta  _n\varphi$ of $\varphi$ by $G$ are defined as 
\[
\beta _n\varphi (x) = \frac{1}{\vert G_n\vert } \sum _{a\in G_n} \varphi (f_a(x)),
\]
where we simply write $f_a$ for $f(a)$.
See e.g.~\cite{nevo1994generalization,lindenstrauss2001pointwise,bufetov2002convergence,CCSV2021} and references therein for the study on the (non-)existence of ball averages (although their approaches are based on the existence of invariant measures while ours are not).

In what follows, we additionally assume that $f=(f_a)_{a\in G}$ is a finitely generated group action by diffeomorphisms on a smooth Riemannian manifold $Y$ with generators $ \alpha_1^\pm , \alpha_2^\pm , \ldots , \alpha_k^\pm$. 
Let $X$ be a compact smooth Riemannian manifold with corners and boundary including pairwise disjoint smooth submanifolds $ \partial _1^\pm , \partial _2^\pm , \ldots , \partial _k^\pm$ of $X$ such that each pair $(\partial _j^+,\partial _j^-)$ can be glued smoothly.
Precisely, we consider a compact smooth Riemannian manifold $X_0$ with corners and boundary, as well as smooth submanifolds $ \partial _1, \partial _2, \ldots , \partial _k$ of $X_0$,  such that $X:=(X_0\times \{+,-\})/\sim $, where $(x,\sigma )\sim (x',\sigma ')$ if $(x,\sigma) =(x',\sigma ')$ or $x= x' \in X_0 \setminus \bigcup _{j=1}^k \partial _j$, is a smooth Riemannian manifold with corners and boundary, and set $\partial _j^\sigma = \pi (\partial _j \times \{\sigma\})$ where $\pi$ is the canonical projection from $X$ to $X_0$, i.e.~$\pi (x,\sigma )= x$.
Gluing $\partial _i^+ \times Y$ and $\partial _i^- \times Y$ in the direct product $X \times Y$ by $f_{\alpha _i} \colon Y \to Y$, we   get a fibration $M = M(X,Y,f)$.
More precisely,  define the equivalence relation $\sim$ on $X\times Y$ by $(x, y) \sim (x', y')$ if either $(x, y) = (x', y')$ or there is an $i = 1, 2, \ldots , k$ such that $x \in \partial _i^+$, $x' \in \partial _i^-$ with $\pi (x) =\pi (x')$, and $y' = f_{\alpha _i}(y)$. 
Then the fibration $M$ is defined as the quotient space $(X \times Y)/\sim$. 
Let $\mathcal F$ be the partition of $M$ consisting of  manifolds with corners of the form $(\cup _{a\in G} X
 \times \{ f_a (y)\})/\sim $,
so that $(M, \mathcal F)$ is a foliated compact smooth Riemannian manifold with corners, and the dimension (resp.~codimension) of $\mathcal F$ is that of $X$ (resp.~of $Y$).
 As a usual abuse of notation, throughout this paper, we identify each point $(x,y)\in X\times Y$ with its equivalent class in $M=(X \times Y)/\sim$ whenever it makes no confusion. 
For example, we simply write $\bigcup _{a\in G} X\times \{f_a(y)\}$ for a leaf $(\bigcup _{a\in G} X\times \{f_a(y)\})/\sim $.

The key definition to connect the length averages of $(M, \mathcal F)$ over the fibration $M=M(X,Y,f)$ and the ball averages of $f$ is the following:
 \begin{definition}\label{dfn:0427}
Given a compact Riemannian manifold $X$ with corners, pairwise disjoint subsets $\{ A_i\}_{i=1}^k$ of $X$ with $k\ge 2$ are called \emph{pairwise equidistant} if there is an $R>0$ such that $\inf _{y\in A_j}d(x, y)=R$ for any $x\in A_i$ and $1\le i, j\le k$ with $i\neq j$. 

Furthermore, we say that \emph{$X$ has thin legs with respect to $\{ A_i\}_{i=1}^k$} (or, \emph{$\{ A_i\}_{i=1}^k$ is thin}, for short) if 
there is a non-empty open subset $V$ of $X$ such that 
$m_1+(D-R)<2m_0$ and $2m_1<R+m_0$ where $D=\sup\{ d(x,x') : x\in X, \; x'\in \bigcup_{i=1}^k A_i\}$, $m_0=\inf\{ d(x,x') : x\in V, \; x'\in \bigcup_{i=1}^k A_i\}$ and $m_1=\sup\{ d(x,x') : x\in V, \; x'\in \bigcup_{i=1}^k A_i\}$. 
\end{definition}

Notice that any cylinder is a compact smooth surface with boundary consisting of pairwise equidistant boundary components $A_1, A_2$ (and thus is thin whenever the distance between $A_1, A_2$ is sufficiently larger than the diameters of $A_1, A_2$).
One can show that the pairwise equidistant property for any smooth manifold only holds in this trivial case $k=2$: 
\begin{prop}\label{prop:2.2}
For any integer $k\ge 3$, there is no compact smooth Riemannian manifold $X$  with boundary including pairwise equidistant codimension one smooth submanifolds $A_1, \ldots , A_k$ of $X$.
\end{prop}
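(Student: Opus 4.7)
The plan is to derive a contradiction by showing that, at any $x\in A_1$, the minimizing geodesics from $x$ to the other submanifolds $A_2,\ldots,A_k$ are forced to have identical initial data and hence to coincide, which contradicts the pairwise disjointness of the $A_j$'s as soon as $k\geq 3$. Note that pairwise equidistance with $R>0$ immediately implies $A_i\cap A_j=\emptyset$ for $i\neq j$, since any common point $y$ would satisfy $d(y,A_j)=0\neq R$.

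First I would fix $x\in A_1$ and, for each $j\in\{2,\ldots,k\}$, choose a unit-speed minimizing geodesic $\gamma_j\colon[0,R]\to X$ with $\gamma_j(0)=x$ and $\gamma_j(R)\in A_j$; its existence follows from compactness. For any smooth curve $c$ in $A_1$ with $c(0)=x$, consider the smooth family of varied curves joining $c(t)$ to the fixed endpoint $\gamma_j(R)$, of length $L(t)$. Since equidistance gives $d(c(t),A_j)=R$, which upper-bounds $L(t)$, while $L(0)=R$, the smooth function $L$ attains its minimum at $t=0$ and $L'(0)=0$. The first variation formula then forces $\langle c'(0),\gamma_j'(0)\rangle=0$, hence $\gamma_j'(0)\perp T_xA_1$. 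Now I would use that $A_1$ lies in $\partial X$ with $\dim A_1=\dim\partial X$: this gives $T_xA_1=T_x\partial X$, so $(T_xA_1)^\perp\subset T_xX$ is the normal line to $\partial X$ and $\gamma_j'(0)=\pm n(x)$; since $\gamma_j$ must enter $X$ at the boundary point $x$, only the inward choice is admissible and $\gamma_j'(0)$ equals the inward unit normal $n_{\mathrm{in}}(x)$, independently of $j$. By uniqueness of geodesics with prescribed initial data and length, all the $\gamma_j$ coincide, so their common endpoint lies in every $A_j$ with $j\in\{2,\ldots,k\}$, contradicting $A_2\cap A_3=\emptyset$ when $k\geq 3$.

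The main subtle point is justifying the first variation step despite $d(\cdot,A_j)$ not being smooth at $x$ a priori; this is handled by working with the length of the explicit varied curve rather than with the distance function itself, reducing matters to first-order optimality of a smooth function of one variable. A secondary but essential feature is that the argument genuinely uses $A_1\subset\partial X$ to single out a unique admissible initial direction: were $A_1$ instead an interior codimension one submanifold, both normal directions $\pm n(x)$ would be admissible and the same reasoning would only yield $k\leq 3$, a bound which is sharp, as illustrated by three equally spaced parallel circles on a flat torus.
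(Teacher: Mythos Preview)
Your argument is essentially the paper's: establish that each minimizing geodesic from $x\in A_1$ to $A_j$ meets $A_1$ orthogonally at $x$ (the paper does this via the distance function $h(z)=d(x,z)$ and the Gauss Lemma, you via first variation---equivalent mechanisms), then invoke uniqueness of geodesics with given initial data, using that $A_1\subset\partial X$ forces the inward direction (a point you make more explicit than the paper does).

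Two small corrections. First, your inequality is reversed: $R=d(c(t),A_j)$ is a \emph{lower} bound for $L(t)$, not an upper bound, since $L(t)\geq d(c(t),\gamma_j(R))\geq d(c(t),A_j)=R$; your conclusion that $L$ has a minimum at $t=0$ is nonetheless correct. Second, before you can speak of a minimizing \emph{geodesic} and apply first variation, you need to know the minimizer is smooth; the paper handles this in a preliminary lemma by observing that the shortest path cannot touch any $A_l$ except at its endpoints (otherwise some pairwise distance would be strictly less than $R$), hence lies in the interior and is a genuine geodesic. You should include this step.
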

 
In contrast, the following theorem ensures that there is a compact smooth surface with corners satisfying the pairwise equidistant property and having thin legs.

\begin{main}\label{prop:pieces}
For any integer $k\ge 2$, there is a compact smooth Riemannian surface $X$ with corners and boundary including pairwise equidistant thin codimension one smooth submanifolds $A_1, \ldots , A_k$ of $X$ each pair of which can be glued smoothly. 
\end{main}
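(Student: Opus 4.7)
The proof is by explicit construction, treating $k=2$ and $k\ge 3$ separately.

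For $k=2$, take $X$ to be the flat cylinder $[0,R]\times S^1$ with $A_1=\{0\}\times S^1$ and $A_2=\{R\}\times S^1$. Pairwise equidistance at constant $R$ is immediate from the product structure, the $A_i$ are smoothly gluable as isometric copies of $S^1$, and the thin legs condition holds with $V$ a small open neighborhood of $\{R/2\}\times S^1$: one has $m_0=m_1=R/2$ and $D=R$, so both defining inequalities of Definition \ref{dfn:0427} reduce to $0<R$.

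For $k\ge 3$, Proposition \ref{prop:2.2} rules out any corner-free smooth model, so the construction must use the corner structure essentially. Our plan is to build $X$ as a ``thick spider'': a central compact surface-with-corners $K$ carrying $k$ smoothly embedded marked boundary circles $b_1,\dots,b_k$ pairwise equidistant in $K$ at some small distance $\delta>0$, to which we attach $k$ identical flat cylindrical legs $L_i=[0,(R-\delta)/2]\times S^1$ along the $b_i$; the free circular ends of the $L_i$ are declared to be $A_i$. Given such a $K$, equidistance for $X$ follows by length decomposition: for any $x\in A_i$ and $j\ne i$, any path from $x$ to $A_j$ traverses $L_i$ (length $\ge (R-\delta)/2$), crosses $K$ from $b_i$ to $b_j$ (length $\ge \delta$), and traverses $L_j$ (length $\ge(R-\delta)/2$), with all three bounds attained by the obvious concatenated path, yielding $\inf_{y\in A_j} d(x,y)=R$ uniformly in $x$.

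The heart of the argument is the construction of the core $K$. We take $K$ to be a $k$-fold symmetric compact region assembled from $k$ congruent ``petal'' pieces arranged cyclically and joined along boundary arcs meeting at prescribed interior corners of $X$. The rigidity used in Proposition \ref{prop:2.2}---that every minimizing geodesic between $b_i$ and $b_j$ is perpendicular to both and unique in direction---breaks down at corners, where distance-minimizing paths may bend and realize the distance $\delta$ via a range of outgoing directions. By a careful explicit choice of petal shape and corner angles, we arrange that for every pair $i\ne j$ the distance-minimizing path from any $x\in b_i$ to $b_j$ passes through the same central corner configuration with common total length $\delta$; the $k$-fold symmetry yields this simultaneously for all pairs. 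Achieving exact rather than approximate equidistance inside $K$ is the main technical obstacle.

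The remaining verifications are routine. For the thin legs condition take $V$ to be a small open subset contained in the interior of $K$; then $m_0$ and $m_1$ both lie close to $(R-\delta)/2$ while the quantity $D$ of Definition \ref{dfn:0427} is close to $(R-\delta)/2+\diam(K)$, so both defining inequalities reduce to elementary estimates valid whenever $\delta$ and $\diam(K)$ are chosen sufficiently small relative to $R$. Smooth gluability of each pair $(A_i,A_j)$ is immediate since they are isometric copies of $S^1$ with a common flat collar neighborhood inherited from the cylindrical legs.
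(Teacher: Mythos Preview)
Your proposal has a genuine gap. Building the core $K$---a compact surface-with-corners carrying $k$ boundary curves $b_1,\dots,b_k$ pairwise equidistant at some distance $\delta$---is precisely the content of the theorem; attaching long flat cylindrical legs afterwards is routine and only serves the thin-legs estimate. You concede that ``achieving exact rather than approximate equidistance inside $K$ is the main technical obstacle,'' yet you do not resolve it: ``a careful explicit choice of petal shape and corner angles'' is a promise, not a construction. A naive $k$-fold rotationally symmetric petal arrangement does not obviously work for $k\ge 4$ (adjacent and non-adjacent petal boundaries will generally sit at different distances), and letting all petals meet at a single central point produces a cone point, not a corner of a manifold-with-corners; corners must lie on the boundary, so the phrase ``interior corners'' is already problematic.

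The paper's proof is explicit and follows a different route. The basic device is a quarter-disk ``fan'' $\tilde F$: every point on its circular arc lies at the same distance (the radius) from the corner vertex, so routing geodesics through that vertex equalises distances automatically. Three such fans, joined by flat passages of calibrated length, yield a pair of pants $X_0'$ with three pairwise equidistant boundary arcs; a deformation into $\RR^3$ makes those arcs smoothly gluable. For general $k$ the paper does \emph{not} build a $k$-fold symmetric core directly. Instead it glues copies of $X_0'$ in a binary-tree pattern to obtain a piece $X_1$ with one ``root'' arc equidistant from $2^n\ge k-1$ ``leaf'' arcs, and then glues $k$ copies of $X_1$ pairwise along their leaves so that the $k$ root arcs become mutually equidistant at distance $2nr_0$. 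The equidistance thus comes from the uniform root--leaf distance in each copy of $X_1$, not from any global rotational symmetry; the thin-legs condition is then arranged by elongating the root leg of each $X_1$.

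A minor correction to your $k=2$ case: $m_0,m_1,D$ are infima and suprema over \emph{pairs} of points, not point-to-set distances. On $[0,R]\times S^1$ with circumference $\ell$ and $V$ a neighborhood of the middle circle one gets $m_0\approx R/2$ but $m_1\approx\sqrt{(R/2)^2+(\ell/2)^2}$ and $D=\sqrt{R^2+(\ell/2)^2}$, not $m_0=m_1=R/2$ and $D=R$. The thin-legs inequalities do hold once $R$ is large relative to $\ell$, so the conclusion survives, but the computation as written is incorrect.
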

More detailed information on the foliated manifold with corners in Theorem \ref{prop:pieces}, as well as the proof of Proposition \ref{prop:2.2}, will be given in Section \ref{s:thmc}.
We especially refer to Figure \ref{fig:bifurcation}.
Given a finitely generated group action $f=(f_a)_{a\in G}$ by diffeomorphisms on a compact Riemannian manifold $Y$, we introduce
\[
\lambda _G(y) := \limsup _{n\to \infty} \frac{\vert G_n(y) \setminus G_{n-1}(y)  \vert}{\vert G_n(y)  \vert }\qquad (y\in Y),
\]
where $G_n(y):=\{ f_\alpha (y) : \alpha \in G_n\}$ and $A\setminus B$ is the set difference of $A$ and $B$. 
Now we state our final result, which relates the existence of length averages with the underlying group structure.

\begin{main}\label{mainthm:4}
Let $X$ be a compact smooth Riemannian manifold with corners and boundary including pairwise equidistant smooth submanifolds 
$\partial  _1^\pm , \ldots , \partial _k^\pm $ with $k \geq 1$ each of which can be glued smoothly.
Consider a finitely generated group action $f=(f_a)_{a\in G}$ by diffeomorphisms on a compact Riemannian manifold $Y$ with generators $ \alpha_1^\pm , \alpha_2^\pm , \ldots , \alpha_k^\pm$.
 Then, the foliated manifold $(M, \mathcal F)$ with corners over 
the fibration $M=M(X,Y,f)$ 
satisfies the following.
\begin{itemize}
\item[(1)]  (Small boundary case)  If $y\in Y$ is such that $\lambda _G(y)=0$, then for any continuous function $\varphi $ on $M$ and any  $x\in X$, 
  it holds that
\begin{align*}
&\limsup _{r\to\infty} \frac{1}{\vert B_r^{\mathcal F}(x,y) \vert } \int _{B_r^{\mathcal F}(x,y)} \varphi (w) dw  = \frac{1}{\vert X\vert } \limsup _{n\to\infty} \beta _n\widetilde \varphi (y), \\
&\liminf _{r\to\infty} \frac{1}{\vert B_r^{\mathcal F}(x,y) \vert } \int _{B_r^{\mathcal F}(x,y)} \varphi (w) dw  = \frac{1}{\vert X\vert } \liminf _{n\to\infty} \beta _n\widetilde \varphi (y), 
\end{align*}
where $\widetilde \varphi$ is a continuous function on $Y$ given by 
\begin{equation}\label{eq:20240406}
\widetilde \varphi (y) = \int _{X \times \{y\}} \varphi (w )dw.
\end{equation}
\item[(2)]  (Large  boundary case) 
 Assume that $\{\partial _i^\pm\}_{i=1}^k$ is thin. 
Then, there are a non-empty open subset $V$ of $X$ and a continuous function $\varphi $ on $M$ such that the following holds: if $y\in Y$ is such that $\lambda _G(y)>0$, then the length average of $\varphi$ 
 does not exist at $(x,y)$ for every $x\in V$.  
\end{itemize}
\end{main}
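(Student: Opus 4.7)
The plan is to exploit the layered structure of the leaves. The leaf $\mathcal F(x,y)$ consists of copies $X\times\{z\}$ indexed by $z$ in the $G$-orbit of $y$, glued along the identified boundaries $\partial_i^\pm$ via the generators. Using the pairwise equidistant property, I first establish the leaf-distance estimate
\[
d_{\mathcal F}\bigl((x,y),(x',f_a(y))\bigr) = d_X(x,\partial_{\text{in}}) + (n-1)R + d_X(\partial_{\text{out}},x')
\]
for a reduced geodesic representative $a$ of length $n$ of an orbit point and any $x'\in X$, where $\partial_{\text{in}}, \partial_{\text{out}}$ denote the first and last boundary components visited by a minimal leaf path. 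The identity holds because, by pairwise equidistance, any geodesic through an intermediate copy from one boundary component to another has length exactly $R$. Consequently, the ball $B_r^{\mathcal F}(x,y)$ decomposes, modulo a shell of partial copies at its boundary, into a union of full copies indexed by a subset of the orbit with layer depth $n\sim r/R$.

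For part~(1), fix $r$ and take $n=n(r)$ maximal with $(n-1)R + m_1 + D \le r$. Then $B_r^{\mathcal F}(x,y)$ contains $X\times\{z\}$ in full for every $z\in G_n(y)$ and only partial copies for $z\in G_{n+1}(y)\setminus G_n(y)$. Hence the bulk has volume $|G_n(y)||X|$ and $\varphi$-integral $\sum_{z\in G_n(y)}\widetilde\varphi(z)$, while the shell's volume and $\varphi$-integral are each bounded by a constant multiple of $|G_{n+1}(y)\setminus G_n(y)||X|$. The hypothesis $\lambda_G(y)=0$ forces the shell to be asymptotically negligible, giving
\[
\frac{1}{|B_r^{\mathcal F}(x,y)|}\int_{B_r^{\mathcal F}(x,y)}\varphi\,dw = \frac{1}{|X|}\cdot\frac{1}{|G_n(y)|}\sum_{z\in G_n(y)}\widetilde\varphi(z) + o(1).
\]
A counting step then identifies this orbit average with the group-ball average $\beta_n\widetilde\varphi(y)$: the discrepancy coming from stabilizer multiplicities in the sum over $G_n$ is of order $|G_n\setminus G_{n-1}|/|G_n|$, hence again absorbed into the $o(1)$ error. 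Taking $\limsup$ and $\liminf$ in $r$ yields the two stated identities.

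For part~(2), take $\varphi$ to be a continuous bump supported in the thin-legs open set $V\subset X$, pulled back to $M$; then $\widetilde\varphi\equiv c_V>0$ is a positive constant. Fix a basepoint $x\in V$. From the leaf-distance identity, the $V$-region inside a level-$k$ copy first enters the ball at leaf-distance $(k-1)R+2m_0$ and is entirely covered at $(k-1)R+2m_1$. For each $n$ I set
\[
r_n^{(1)} = nR + 2m_0 - \epsilon, \qquad r_n^{(2)} = nR + 2m_1,
\]
with $\epsilon>0$ small. The thin-legs inequalities $2m_1<R+m_0$ and $m_1+(D-R)<2m_0$ are precisely what ensure, first, that at $r_n^{(1)}$ the ball fully covers every $V$-region of levels $\le n$ while no $V$-region of level $n+1$ has yet entered, and second, that at $r_n^{(2)}$ it has additionally captured every $V$-region of level $n+1$ while no $V$-region of level $n+2$ has yet appeared. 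Consequently
\[
\int_{B_{r_n^{(1)}}^{\mathcal F}(x,y)}\varphi\,dw = c_V|G_n(y)|, \qquad \int_{B_{r_n^{(2)}}^{\mathcal F}(x,y)}\varphi\,dw = c_V|G_{n+1}(y)|,
\]
while the two volumes are of the same order, differing only by a controlled amount of partial level-$(n+1)$ coverage. Setting $\mu_n = |G_{n+1}(y)\setminus G_n(y)|/|G_n(y)|$, a direct ratio computation shows that the difference of the length-average values at $r_n^{(2)}$ and $r_n^{(1)}$ is bounded below by a positive constant multiple of $\mu_n/(1+\mu_n)$. Since $\lambda_G(y)>0$, the quantity $\mu_n$ is bounded away from $0$ along a subsequence, so the length average of $\varphi$ cannot exist at $(x,y)$ for any $x\in V$.

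The main obstacle is the radius schedule in part~(2): one must verify that no $V$-region is captured earlier than claimed via a shortcut through an alternative boundary, and that level-$(n+2)$ $V$-regions remain safely excluded at $r_n^{(2)}$. Both checks reduce exactly to the two inequalities in Definition~\ref{dfn:0427}, using the tightness of the leaf-distance formula granted by the pairwise equidistant property. The explicit geometry supplied by Theorem~\ref{prop:pieces} is what allows all these constraints to hold simultaneously, closing the plan.
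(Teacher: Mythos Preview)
Your treatment of part~(1) follows the paper's argument closely: both sandwich $B_r^{\mathcal F}(x,y)$ between unions of full copies indexed by $G_{n-1}(y)$ and $G_{n+1}(y)$ and use $\lambda_G(y)=0$ to kill the shell. Your extra ``counting step'' flags a genuine subtlety (group-ball average versus orbit average) that the paper glosses over.

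For part~(2) your route diverges from the paper's. The paper chooses radii $r_n=R_0+(n-1)R$ and $s_n=R_1+(n-2)R$ so that the $\varphi$-integrals over $B_{r_n}$ and $B_{s_n}$ are \emph{equal} (both $=|X|\,|G_n(y)|$), while the volumes differ: at $r_n$ the volume is at least $|X|\,|G_n(y)|$, whereas at $s_n$ level~$n{+}1$ is entirely absent and only a fraction $K<1$ of each level-$n$ copy is present, giving volume $\le |X|\,|G_n(y)|\bigl(1-(1-K)\tfrac{|G_n(y)\setminus G_{n-1}(y)|}{|G_n(y)|}\bigr)$. The gap between the two averages is then immediate from this explicit constant $K$. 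Your scheme instead compares radii $r_n^{(1)}$, $r_n^{(2)}$ at which the \emph{integrals} differ (by the factor $1+\mu_n$) and asserts the volumes are ``of the same order''. This is where your argument is incomplete: if level-$(n{+}1)$ copies happened to be essentially untouched at $r_n^{(1)}$ and essentially full at $r_n^{(2)}$, the volume would also scale by $1+\mu_n$ and the averages would coincide. What actually saves you is that at $r_n^{(1)}$ the ball already covers a neighbourhood of thickness $2m_0-m_1-\epsilon$ of the entry boundary in each level-$(n{+}1)$ copy (this uses $m_1<2m_0$, which follows from the thin-legs inequality $m_1+(D-R)<2m_0$ together with the elementary fact $D\ge R$ forced by equidistance). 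With that lower bound on the partial coverage in hand, your ``direct ratio computation'' does go through; but this step is the heart of the matter and should be stated, not hidden. Note also that your ``leaf-distance identity'' is not an equality in general---only the lower bound is automatic, and the upper bound needs a short construction (walk backwards from $x'$ to the entry boundary using $d_X(x',\partial_{j})\le m_1$, then hop boundaries at cost $R$, then reach $x$ at cost $\le m_1$); the paper carries out the analogous constructions when verifying its properties (i-a)--(ii-b).
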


Similarly to Theorem \ref{mainthm:2},
the proof of item (2) above ensures that (under the same assumption) if $\lambda_G(y)>0$ then one can find a $\mathcal C^0$ open set $\mathcal U$ of continuous functions such that for any $\varphi\in\mathcal U$ and $x\in V$, the length average of $\varphi$ does not exist at $(x,y)$.
Below, for each $y\in Y$ and $n\in \mathbb N$, we identify $G_n(y)$ with $G_n/\sim_{y}$, where $\sim_{y}$ is an equivalence relation in $G_n$ defined by $a\sim _{y}b$ for $a,b\in G_n$ if and only if $f_a(y)=f_b(y)$.
We further identify each element of $G_n/\sim_{y}$ with its representative. 
 
\begin{rmk}
Let $\lambda_G:=\limsup _{n\to \infty} \frac{\vert G_n \setminus G_{n-1} \vert}{\vert G_n \vert }$.
It is straightforward to see that, given $y\in Y$, we have $G_n(y)=G_n$ for every $n\ge 1$ if and only if $f_\alpha(y)\neq y$ for every non-empty word $\alpha \in \bigcup _{n\ge 1}G_n$. 
These conditions imply $\lambda_G(y)=\lambda _G$ and are satisfied for several examples (e.g.~the group action $f$ given in the proof of Theorem \ref{mainthm:2} and $y$ in a non-empty open subset of $Y$).
\end{rmk}

\begin{rmk}
The condition $\lambda _G =0$ implies that $\{ G_n\}_{n\ge 1}$ is a  F{\o}lner sequence.
Recall that a sequence $\{F_n\}_{n\ge 1}$ of finite subsets of $G$ is 
 said to be \emph{F{\o}lner} if for any $a \in G$,
\[
\lim _{n\to \infty} \frac{\vert a F_n\triangle F_n \vert}{\vert F_n\vert } =0,
\]
where $aF_n =\{ a b : b \in F_n\}$ and $\triangle$ is the symmetric difference operation. 
In fact, for any finite subset $F$ of $G$ and $a,b\in G$, it holds that $\vert ab F\triangle F\vert \le  \vert ab F\triangle a F\vert  +  \vert a F\triangle F\vert \le \vert  b F\triangle F\vert  +  \vert a F\triangle F\vert$ because the (left-)multiplication on $G$ by $a$ is bijective.
Hence, for any $a  \in G$ of the form $a = (a_1a_2\cdots a_m)$ with some $m\ge 1$ and $a_j \in G_1$ for $j=1,\ldots ,m$, by denoting $[a]_j=(a_1a_2\cdots a_j)$, we have 
\begin{align*}
\vert a G_n\triangle G_n\vert &\le \vert a_m G_n\triangle G_n\vert  +  \left\vert [a]_{m-1} G_n\triangle G_n\right\vert \le \cdots \le \sum _{j=1}^m \vert a_j G_n\triangle G_n\vert \\
& \le \sum _{j=1}^m \left(\vert a_j G_n\triangle G_{n+1}\vert +\vert G_{n+1}\triangle G_n\vert \right) \le 2m \vert G_{n+1}\setminus G_n\vert .
\end{align*}
The claim immediately follows from it.

We also note that it was shown in \cite{lindenstrauss2001pointwise} that if $G$ is amenable 
and $\mu$ is an invariant probability measure for $G$ (i.e.~$\mu(f_a^{-1}A) =\mu(A)$ for every $a\in G$ and Borel measurable set $A$), then for any $\mu$-integrable function $\varphi$ and any tempered F{\o}lner sequence $\{F_n\}_{n\ge 1}$,  the average $\lim_{n\to\infty}\frac{1}{\vert F_n\vert } \sum _{a\in F_n} \varphi (f_a(y))$ exists for $\mu$-almost every $y$ (refer to \cite{lindenstrauss2001pointwise} for the precise definition of amenability and tempered F{\o}lner sequences).
This will be an important ingredient to apply the main theorem of \cite{CCSV2021} in the proof of Theorem \ref{mainthm:1}.
\end{rmk}

We close this section by explaining
the idea in the construction of the smooth foliated 3-manifold of Theorem \ref{mainthm:2} (and related examples).
The basic idea
is similar to Theorem \ref{mainthm:4} (2),
that is, to use the non-F{\o}lner property of the holonomy dynamics.  
However, since we cannot construct a smooth model leaf without corners with the equidistant property due to Proposition \ref{prop:2.2},
we need to modify both the model leaf and the gluing of leaves in Theorem \ref{mainthm:4} (2). 
The idea for the modification on the model leaf is to use pants with `long' legs to get sufficiently nearly equidistant boundaries.
The drawback of it is that the nearly equidistant property only holds in the `positive' direction.
This will be overcome by using a special ping-pong dynamics for gluing.

In this connection,
we provide (relatively elementary) examples of smooth foliated 3-manifolds with length averages constructed in the spirit of Theorem \ref{mainthm:4} (1).
One can consider the fibration of a cylinder (a surface with two equidistant boundary components) over a circle diffeomorphism, to which Theorem \ref{mainthm:4} (1) is directly applicable.
Another smooth example whose length averages can be calculated
in a similar manner to
Theorem \ref{mainthm:4} (1) is the following. Let $d\ge 2$, $1\le l\le d-1$ be integers.
Recall that, given an $\mathbb R^{l}$-action $f$ on $\mathbb T^d$ (which is an amenable group action), the orbit foliation by $f$ is a foliation consisting of leaves of the form $\{f(t)(x): t\in\mathbb R^{l}\}$ over $x\in \mathbb T^d$, and the ball average $\beta_r\varphi$ of a continuous function $\varphi$ on $\mathbb T^d$ is given by $\beta_r\varphi(x)=\frac{1}{\vert B_r\vert}\int_{B_r}\varphi(f(t)(x))dt$, where $B_r$ is the $l$-dimensional Euclidean ball of radius $r>0$.
Furthermore, given $\alpha=(\alpha^{(1)},\ldots,\alpha^{(l)})\in (\mathbb R^d)^{l}$, the $\alpha$-rotation $f$ is defined by
$f(t_1,\ldots,t_{l})(x)=x+\sum_{j=1}^{l}t_j\alpha ^{(j)} \mod \mathbb Z^d$.
\begin{prop}\label{prop:2.6}
Let $\alpha=(\alpha^{(1)},\ldots,\alpha^{(l)})\in(\mathbb R^d)^{l}$ be such that 
$\sum_{j=1}^{l}t_j\alpha ^{(j)}
\not\in\mathbb Z^d$ for any $(t_1,\ldots,t_l)\in\mathbb R^l\setminus\{0\}$.
Let $f$ be the $\alpha$-rotation and $\mathcal F$ the orbit foliation by $f$. Then, for any continuous function $\varphi$ on $\mathbb T^d$, $x\in\mathbb T^d$ and $r>0$, it holds that
\[
\frac{1}{\vert B_r^{\mathcal F}(x)\vert } \int _{B_r^{\mathcal F}(x)}\varphi (w) dw = \beta_r\varphi (x)
\]
and
\[
\displaystyle\lim_{s\to\infty}\beta_s\varphi (x) =\int _{\mathbb T^d} \varphi (y)  dy.
\]
In particular, length averages for $\mathcal F$ exist everywhere.
\end{prop}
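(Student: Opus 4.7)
The approach is to reduce both displayed identities to elementary properties of the orbit map $\Phi_x \colon \mathbb{R}^l \to \mathbb{T}^d$, $t \mapsto f(t)(x)$, which the hypothesis on $\alpha$ makes injective.

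For the first identity, I would use that translations are isometries of the flat torus, so the pulled-back metric $\Phi_x^{\ast} g$ on $\mathbb{R}^l$ is translation-invariant and hence a constant flat metric; the definition of $\beta_r$ (in which $B_r$ is the Euclidean $r$-ball in $\mathbb{R}^l$) corresponds to the convention identifying $\Phi_x^\ast g$ with the Euclidean metric. Under this convention, $\Phi_x$ is an isometric parametrization of the leaf $\mathcal{F}(x)$, so that $\Phi_x(B_r) = B_r^{\mathcal{F}}(x)$ and $\vert B_r^{\mathcal{F}}(x)\vert = \vert B_r\vert$. A change of variables then gives
\begin{equation*}
\frac{1}{\vert B_r^{\mathcal{F}}(x)\vert }\int_{B_r^{\mathcal{F}}(x)} \varphi(w)\,dw
= \frac{1}{\vert B_r\vert }\int_{B_r} \varphi(f(t)(x))\,dt = \beta_r\varphi(x).
\end{equation*}

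For the second identity, I would argue by Fourier analysis on $\mathbb{T}^d$. By Stone--Weierstrass and linearity it suffices to check the claim on the characters $\chi_k(y) = e^{2\pi i k\cdot y}$, $k \in \mathbb{Z}^d$. For $k=0$ both sides equal $1$. For $k\neq 0$, a direct computation gives
\begin{equation*}
\beta_s \chi_k(x) = \chi_k(x)\cdot\frac{1}{\vert B_s\vert }\int_{B_s} e^{2\pi i\,t\cdot \xi_k}\,dt,
\qquad \xi_k := \bigl(k\cdot\alpha^{(1)},\ldots,k\cdot\alpha^{(l)}\bigr)\in\mathbb{R}^l,
\end{equation*}
and the integral is the Fourier transform at $\xi_k$ of the normalized indicator of the Euclidean ball in $\mathbb{R}^l$, which decays to zero as $s\to\infty$ whenever $\xi_k\neq 0$. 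The hypothesis on $\alpha$ is used precisely to ensure $\xi_k \neq 0$ for every nonzero $k\in\mathbb{Z}^d$, so that $\beta_s\chi_k(x)\to 0 = \int_{\mathbb{T}^d}\chi_k\,dy$, and the convergence on characters then upgrades to every continuous $\varphi$ by uniform approximation. Combining the two identities yields existence of length averages at every point.

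The step I expect to require the most care is the first one: setting up $\Phi_x$ as an isometry onto $\mathcal{F}(x)$ so that the volume forms match exactly is where the choice of normalization of the generators $\alpha^{(j)}$ and the definition of $\beta_r$ must be kept compatible. The Fourier-analytic part is then a standard Weyl-type equidistribution argument once the non-degeneracy $\xi_k\neq 0$ is in place.
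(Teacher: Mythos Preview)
Your treatment of the first identity matches the paper's: both reduce it to the injectivity of the orbit map $\Phi_x$ (the paper simply notes that non-periodicity ``immediately implies the first assertion''), and you are right to flag the normalization issue between the induced leaf metric and the standard Euclidean metric on $\mathbb{R}^l$, which the paper does not discuss.

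For the second identity you take a genuinely different route. The paper argues via unique ergodicity: it shows that any weak-$*$ accumulation point of the measures $\beta_r\delta_x=\frac{1}{|B_r|}\int_{B_r}\delta_{f(t)(x)}\,dt$ is $f$-invariant (using the F{\o}lner-type estimate $|B_r(t)\triangle B_r|/|B_r|\to 0$), and then invokes the claim that Lebesgue is the unique $f$-invariant probability measure on $\mathbb{T}^d$. Your Fourier/Weyl approach is more hands-on and bypasses the compactness argument on the space of measures; it also makes transparent exactly which arithmetic condition on $\alpha$ is being used.

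That last point exposes a genuine gap in your proposal. You assert that the stated hypothesis ensures $\xi_k\neq 0$ for every nonzero $k\in\mathbb{Z}^d$, but it does not: the hypothesis says the \emph{image} of the linear map $t\mapsto\sum_j t_j\alpha^{(j)}$ meets $\mathbb{Z}^d$ only at $0$, whereas $\xi_k\neq 0$ for all $k\neq 0$ asks that the \emph{kernel of the transpose} meet $\mathbb{Z}^d$ only at $0$. These are independent conditions. For $d=3$, $l=1$, $\alpha^{(1)}=(1,\sqrt{2},0)$ the hypothesis holds, yet $k=(0,0,1)$ gives $\xi_k=0$ and $\beta_s\chi_k(x)=\chi_k(x)\not\to 0$. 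To be fair, the paper's proof shares this gap: in the same example Lebesgue measure on each slice $\mathbb{T}^2\times\{c\}$ is $f$-invariant, so the asserted unique ergodicity also fails. Both arguments go through once one adds the dual condition $\xi_k\neq 0$ for all nonzero $k\in\mathbb{Z}^d$; your Fourier computation in fact isolates this as the precise requirement.
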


\subsection*{Organization of the paper}
Sections \ref{s:3}, \ref{s:thmc}, \ref{s:5}, \ref{s:6} will be devoted to the proof of Theorems \ref{mainthm:1}, \ref{prop:pieces},  \ref{mainthm:4}, \ref{mainthm:2}, respectively. 
The proof of Proposition \ref{prop:2.2} will be found in Section \ref{s:thmc}, and  Corollary \ref{cor:y}, Proposition \ref{prop:2.6} in Section \ref{s:6}.

\section{Proof of Theorem \ref{mainthm:1}}\label{s:3}

\subsection{Suspension of an interval exchange transformation with a residual irregular set}\label{S:2.1}
We start the proof of Theorem \ref{mainthm:1} by remembering that Keynes and Newton \cite{KN1976} showed that there is a minimal but non-uniquely ergodic interval exchange transformation $T: S^1 \to S^1$
with $S^1 =\mathbb R/\mathbb Z$.
In fact, $T$ has two distinct ergodic invariant probability measures $\mu_1, \mu_2$, each of which is the push-forward of the Lebesgue measure on the circle by a homeomorphism, so that 
$\mu _1$, $\mu_2$ are non-atomic and 
the supports of $\mu _1$ and $\mu_2$ are dense in $X$ (cf.~\cite[Section 8.1]{Yoccoz2010}). 

Given a bounded positive function $\rho :S^1\to \mathbb R$, let $X_\rho:=\{(x,y):x\in S^1, \; 0\le y\le \rho (x)\}/\approx$, where the equivalence relation $\approx$ is the symmetrization of the binary relation $\sim$ given by $(x,y)\sim (x',y')$ if and only if $(x,y)=(x',y')$ or $x'=T(x)$, $y=\rho(x)$, $y'=0$. 
We identify each point in $X_\rho$ with its representative in the fundamental domain $\{(x,y):x\in S^1, \; 0\le y<\rho (x)\}$.
Recall that the suspension flow $\{f^t\}_{t\in \mathbb R}$ of $T$ over $\rho$ is given as a flow on $X_\rho$ defined by
\[
f^t(x,y)=(T^n(x),y+t-\tau_n(x))
\qquad \text{if $\tau_n(x)\le y+t<\tau_{n+1}(x)$ for $n\in\mathbb Z$},
\]
where $\tau_n(x)$ is the $n$-th hitting time to the roof of $X_\rho$, that is 
\[
\tau_{n+ 1}(x)=\tau_{n}(x)+\rho(T^n(x)),\qquad  \tau_0(x)=0
\]
(recall that $T$ is a bijection).
Let $\mu_{\rho,j}$, $j=1,2$, be the probability measure given by $\mu_{\rho,j}(A)=\frac{(\mu_j \times \mathrm{Leb})(A)}{(\mu_j \times \mathrm{Leb})(X_\rho)}$ for each Borel set $A\subset X_\rho$.
It is straightforward to see that $\mu_{\rho,1}$, $\mu_{\rho,2}$ are also ergodic invariant probability measures for the amenable group action $\{ f^t : t\in \mathbb R\}$. 

Assume that $\rho$ is continuous except the set of all discontinuity points  $x_1,\ldots,x_r$ of $T$, denoted by $D$.
Consider $X_{\rho,0}:= X_\rho \setminus \bigcup _{t\in \mathbb R}f^t (D\times \{0\})$.
Since $D$ is a finite set, $\bigcup _{n\in \mathbb Z}T^n (D)$ is a countable set. 
Hence, each $\mu _{\rho,j}$ cannot be supported on $\bigcup _{t\in \mathbb R}f^t (D\times \{0\})$ because $\mu_j$ is non-atomic.
Therefore, the restriction of $\{ f^t : t\in \mathbb R\}$ on $X_{\rho,0}$ is continuous and it has at least two ergodic invariant measures $\mu_{\rho,1},\mu _{\rho,2}$ whose supports are dense in $X_{\rho,0}$.
Let $\psi _\rho:X_\rho\to \mathbb R$ be a continuous function such that $\int \psi _\rho\, d\mu _{\rho,1}\neq \int \psi _\rho\, d\mu_{\rho,2}$. 
By Birkhoff's ergodic theorem,
there are dense subsets $D_1, D_2$ of $X_\rho$ (containing the supports of $\mu_{\rho,1},\mu_{\rho,2}$) such that
$\lim _{n\to \infty} \psi _n(z)=\int \psi _\rho\, d\mu _{\rho,j}$ for all $z\in D_j$, $j=1,2$, where $ \psi _n(z)=\frac{1}{2n} \int^n_{-n} \psi _\rho\circ f^{t} (z) \, dt$.
Therefore, the sequence $\{\psi_n\}_{n\ge 1}$ of bounded and continuous functions on the Baire space $X_{\rho,0}$ satisfies the hypothesis of 
\cite[Theorem A]{CCSV2021}.
Thus, it follows from \cite[Theorem A]{CCSV2021} that there is a residual subset $R_\rho \subset X_{\rho,0}$ 
 such that 
$\lim _{n\to \infty} \psi_n (z)$ does not exist for all $z\in R_\rho$. Consequently,
    \begin{equation}\label{eq:0819}
\text{$\displaystyle \lim _{T\to \infty} \frac{1}{2T} \int^T_{-T} \psi _\rho\circ f^t (z) \, dt$ does not exist}
 \end{equation}
 for all $z\in R_\rho$.
Notice that $R_\rho$ is also a residual subset of $X_{\rho}$.

\subsection{A basic foliated surface}\label{s:bfs}
We will consider a compact surface induced by $f$ in Section \ref{S:2.1}, which is a key part of Theorem \ref{mainthm:1}. 
The idea is similar to the suspension of $f$, but we need a modification because the suspension of $f$ is not a smooth manifold due to the discontinuity set $D$. 
For the modification, 
we will construct the desired foliated surface by gluing finitely many scalings of a basic foliated compact surface (with boundary). 
We first prepare the basic foliated surface.

Assume that $x_0<x_1<\cdots <x_r$. 
Set $I_j:=[x_j,x_{(j+1 \mod r)}] $ and  
\[
c _0:=  \min _{1\le j\le r} \vert I_j\vert, 
\]
where $\vert I \vert $ is the length of an interval $I$ in $S^1$.
Fix $c \in [\frac{c_0}{2},1)$. 
Let $\mathcal Y_0$ be the linear vector field on $[0,c]\times [-1,2]$ of the form 
\[
\mathcal Y_0(x,y)=(x,-y),
\]
for which a part $\{0\}\times [-1,0)$ of the boundary of $[0,c]\times [-1,2]$ is a leaf, and
let $\mathcal Y_1$ be the constant vector field on $[0,c]\times [-1,2]$ 
\[
 \mathcal Y_1(x,y)=(0,1),
\]
which generates a trivial foliation consisting of vertical leaves. 
Let $B: \mathbb R^2 \to[0,1]$ be a $\mathcal C^\infty$ bump function such that $B(x,y) =0$ if $(x,y) \in [-\frac{c_0}{4}, \frac{c_0}{4}]^2$ and $B(x,y)=1$ if $(x,y)\not\in (-\frac{c_0}{3}, \frac{c_0}{3})^2$ (notice that $[-\frac{c_0}{3}, \frac{c_0}{3}]^2 \subset [-c,c] \times [-1,2]$ because $c  \ge  \frac{c_0}{2}$ by assumption and $c_0=\min _{0\le j\le r} \vert I_j\vert \le 1$). 
Let $\mathcal Y$ be  the $\mathcal C^\infty$ vector field on $[0,c]\times [-1,2]$ given by 
\[
\mathcal Y= (1-B)\mathcal Y_0 +B \mathcal Y_1.
\]
Let $\mathcal F_c'$ be a 
 foliation consisting of integral curves
 generated by $\mathcal Y$ starting from points in  $[0,1]\times \{-1\}$, the singular leaf $\{(0,0)\}$, and the separatrix $\partial_c$ (i.e.~the integral curve including the line segment $(0,\frac{c_0}{4}]\times \{0\}$), 
  see Figure \ref{fig:temp}.
 Let $M_c':=\bigsqcup _{L\in \mathcal F_c'}  L$.
 Notice that  each leaf of $\mathcal F_c'$ intersecting $[0, \frac{c_0}{3}] \times [-1, 2]$ is independent of the choice of $c$ (but depending on $c_0$).

\begin{figure}[h]
\begin{center}
\includegraphics[scale=0.5]{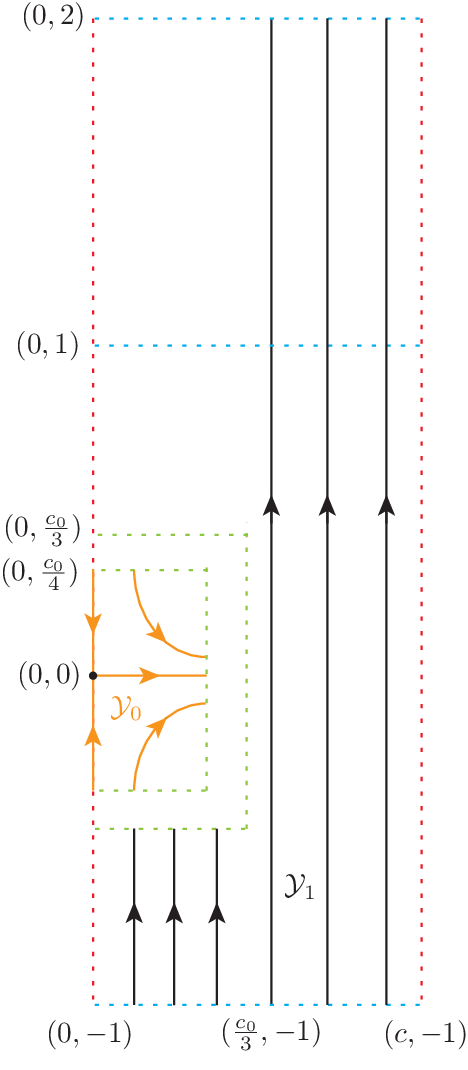}
\includegraphics[scale=0.5]{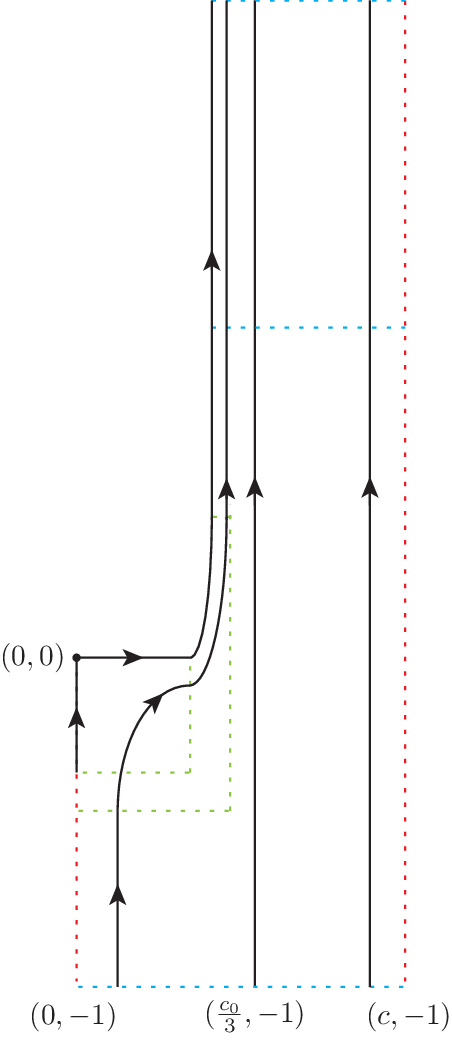}
\mbox{\raisebox{0mm}{\includegraphics[scale=0.265]{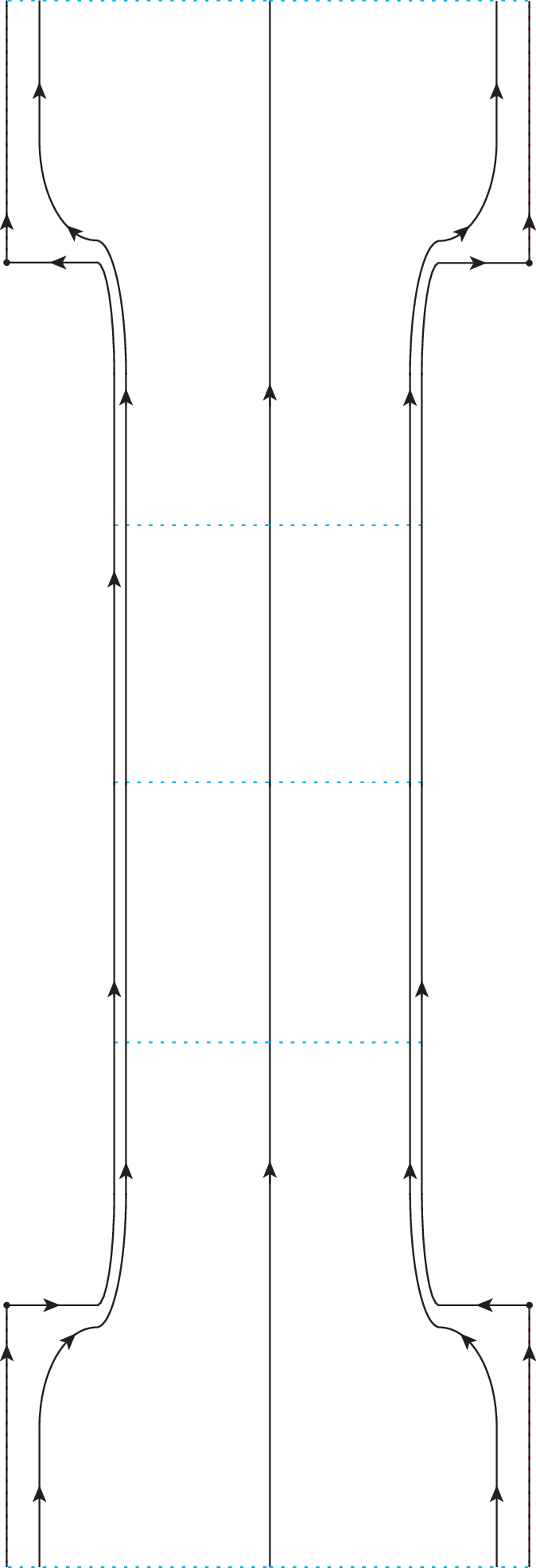}}}
\end{center}
\caption{(a) $\mathcal Y_0$, $\mathcal Y_1$; (b) $(M_c',\mathcal F_c')$; (c) $xy$-projection of $(M_{[a,b]},\mathcal F_{[a,c]})$}
\label{fig:temp}
\end{figure}

\subsection{Removal of auxiliary boundary}\label{sec:3.3}

We will glue the boundary components $[0,c]\times \{-2\}$ and $[0,c]\times \{2\}$ of $M_c'$ by the circle map $T$ in Section \ref{S:2.1},
and use the boundary components $\{0\}\times [-1,0]$ and $\{c\}\times [-1,1]$ 
to glue $M_c'$ with another basic foliated manifold $M_{c'}'$.
However, the resulting foliated manifold will have boundary components corresponding to the separatrices $\partial_c, \partial_{c'}$ of $M_c'$, $M_{c'}'$. 
To remove it, we take the doubling of $M_c'$ along the separatrix before the gluing.
The problem is that the doubling is not smooth, hence
we first need to deform $M_c'$ near the separatrix, as performed below.  

Let $\beta_1 \colon \R \to [0,1]$ be a bump function with $\beta_1^{-1}(1) = \R_{\geq 1}$ and $\beta_1^{-1}(0) =  \R_{\leq \frac{1}{2}}$ such that the restriction $\beta_1 \vert_{[\frac{1}{2},1]}$ is strictly increasing, where $\mathbb R_{\ge r}:=\{x\in\mathbb R: x\ge r\}$.
Define a smooth function $\chi_1 \colon \R \to \R_{\leq 1}$ by $\chi_1(r) := (1- \beta_1(r))(r +\frac{1}{3}) + \beta_1(r)$. 
Then, $\chi_1^{-1}(1) = \R_{\geq 1}$ and $\chi_1\vert_{\R_{\leq \frac{1}{2}}}(r) = r + \frac{1}{3}$ as depicted in the upper left on Figure~\ref{fig:smooth_corners}. 
\begin{figure}[t]
\begin{center}
\includegraphics[scale=0.3]{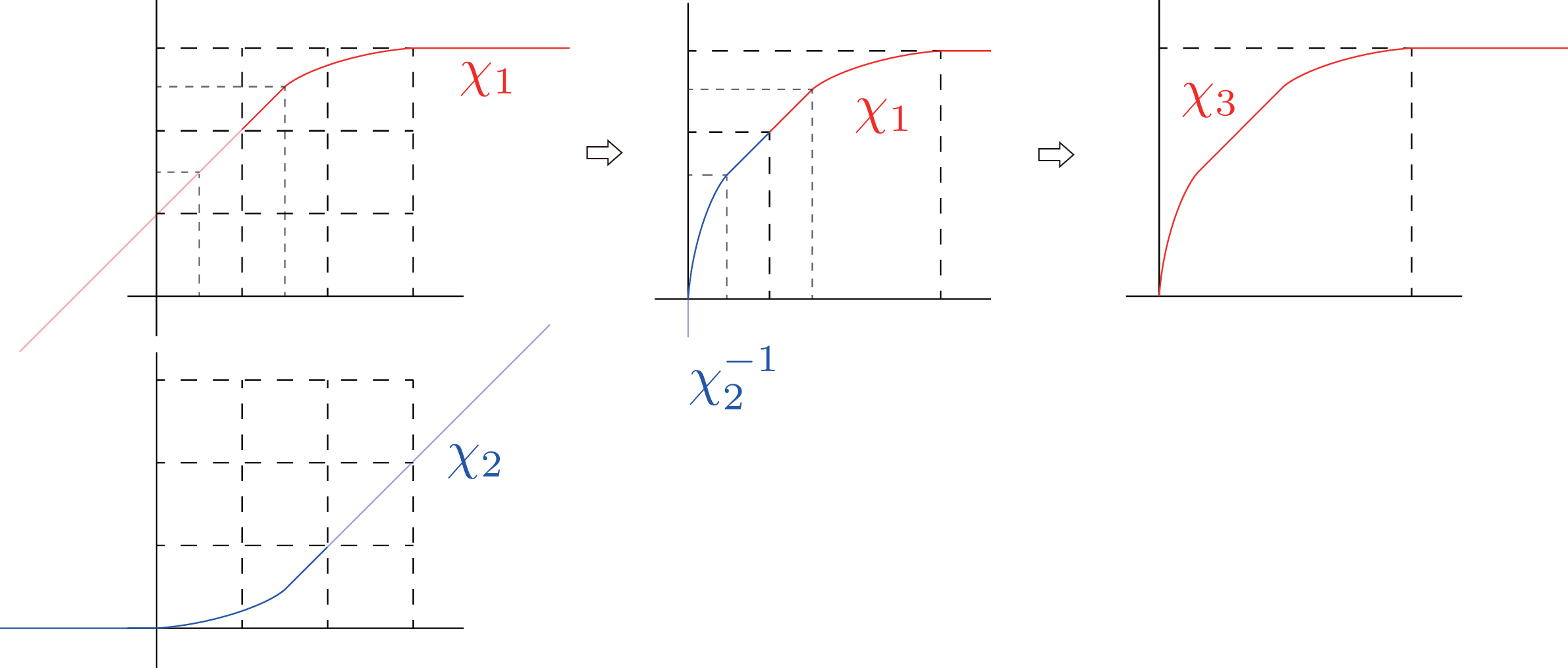}
\end{center}
\caption{Smooth curves for a smooth doubling of $M_c'$}
\label{fig:smooth_corners}
\end{figure}
Similarly, let $\beta_2 \colon \R \to [0,1]$ be a bump function with $\beta_2^{-1}(1) = \R_{\geq \frac{1}{2}}$ and $\beta_2^{-1}(0) =  \R_{\leq 0}$ such that the restriction $\beta_2 \vert_{[0,\frac{1}{2}]}$ is strictly increasing. 
Define a smooth function $\chi_2 \colon \R \to \R_{\geq 0}$ by $\chi_2(r) :=  \beta_2(r)(r -\frac{1}{3})$. 
Then $\chi_2^{-1}(0) = \R_{\leq 0}$ and $\chi_2\vert_{\R_{\geq \frac{1}{2}}}(r) = r - \frac{1}{3}$ as depicted in the lower left on Figure~\ref{fig:smooth_corners}. 
Define a smooth function $\chi_3 \colon [0,2] \to [0,1]$ as 
\[
\chi_3 (r) := 
\begin{cases}
\chi_2 \vert_{\R_{\geq 0}}^{-1}(r) & r \in [0,\frac{1}{2}) \\
\chi_1(r) & r \in [\frac{1}{2}, 2]
\end{cases}.
\]

Recall the separatrix $\partial_c$ of $M_c'$ given in Section \ref{s:bfs}.
Let $U_c$ be a tubular neighborhood of $\partial_c$ in $M_c'$ which is identified with $\partial_c\times [0,2]$ by a smooth diffeomorphism $\alpha :\partial_c\times [0,2] \to U_c$ such that $\alpha(\partial_c\times \{0\})=\partial_c$ and $\alpha(\{ (s,0)\}\times [0,1])$ is a vertical line through $(s,0)$ for any small $s\ge 0$.
Moreover, define a smooth function $\widetilde{\chi} \colon M_c' \to [0,1]$ by 
\[
\widetilde{\chi} \vert_{M_c' \setminus U_c} = 1\quad \text{and}\quad \widetilde{\chi} \vert_{U_c}(\alpha (p,r)) = \chi_3(r) \quad \text{for $(p,r)\in \partial_c\times [0,2]$}.
\]
Then, the graph of $\widetilde{\chi}$, denoted by $\mathrm{graph}(\widetilde{\chi})$, is a smooth compact surface with boundary, and so is $\mathrm{graph}(-\widetilde{\chi})$.
In addition, the union 
\[
 M _c:= \mathrm{graph}(\widetilde{\chi}) \cup \mathrm{graph}(-\widetilde{\chi})
 \]
 is a smooth compact surface with boundary because $\mathrm{graph}(\widetilde{\chi}) $ and $ \mathrm{graph}(-\widetilde{\chi})$ are flat on $\partial_c\times\{0\}=\mathrm{graph}(\widetilde{\chi}) \cap \mathrm{graph}(-\widetilde{\chi})$.
Since the projection $\pi$ given by $\pi(x,y,z)=(x,y)$ for $(x,y,z)\in M_c'\times \mathbb R$ diffeomorphically maps $\mathrm{graph}(\widetilde{\chi}) $ and  $\mathrm{graph}(-\widetilde{\chi})$ to $M_c'$, the surfaces $\mathrm{graph}(\widetilde{\chi})$ and $\mathrm{graph}(-\widetilde{\chi})$ have the induced smooth foliations $\widetilde{\mathcal{F}}_{+}$ and $\widetilde{\mathcal{F}}_{-}$ from $\mathcal{F}_c'$, respectively. 
Since $\partial _c\times \{0\}$ is a leaf of both $\widetilde{\mathcal{F}}_{+} $ and  $\widetilde{\mathcal{F}}_{-} $, the compact surface $M_c$ has the induced smooth foliation $\mathcal{F} _c:= \widetilde{\mathcal{F}}_{+} \cup \widetilde{\mathcal{F}}_{-}$.

\subsection{The foliated compact surface of Theorem \ref{mainthm:1}}

Given a closed interval $[a,b]$ of $S^1$ with length larger than or equal to $c_0$, 
we define a foliated compact surface with boundary $(M_{[a,b]},\mathcal F_{[a,b]})$ in $[a,b]\times [-3,3]\times \mathbb R$ 
\begin{align*}
M_{[a,b]}'&:=T_{(a,-2)}(M_c) \cup T_{(b,-2)}(R_1(M_c)),\\
M_{[a,b]}&:= M_{[a,b]}' \cup R_2(M_{[a,b]}'),\\
 \mathcal F_{[a,b]}&=\{ \text{connected component of } T_{(a,-2)}(L) \cup R_2(T_{(a,-2)}(L)) : L \in \mathcal F_c\} \\
&\cup \{ \text{connected component of } T_{(b,-2)}(R_1(L)) \cup R_2(T_{(b,-2)}(R_1(L))) : L \in \mathcal F_c\},
\end{align*}
with $c= \frac{b-a}{2}$, where $T_{(s,t)}(x,y,z) =(x+s,y+t,z)$ and $R_1(x,y,z)=(-x,y,z)$, $R_2(x,y,z)=(x,-y,z)$, see Figure \ref{fig:temp}.

Note that, the leaves of $\mathcal F_c$ passing through points in $\{(x,-1,\pm 1) : 0\le x \le \frac{c_0}{3}\}$
 do not depend on $c$.
Moreover, the vertical boundary component $\{x=0\} \cap M_c$ of $M_c$ is a union of leaves of $\mathcal F_c$, and any horizontal line $\{y=s, z\ge 0\} \cap M_c$ or $\{y=s, z\le 0\} \cap M_c$ is flat at the boundary $\{x=0\}\cap M_c$ of $M_c$ for each $-1\le s\le 0$ by construction (recall the properties of $\alpha$ in Section \ref{sec:3.3}).
Hence,  for every  $[a,b], [a',b'] \subset S^1$ satisfying that $\vert [a,b] \cap [a',b'] \vert = 1$ and $\min \{ \vert [a,b] \vert , \vert [a',b'] \vert \} \ge  c _0$, one can smoothly glue $(M_{[a,b]},\mathcal F_{[a,b]})$ and $(M_{[a',b']},\mathcal F_{[a',b']})$ along the leaves contained in $\{x \in [a,b] \cap [a',b'], y < 0\}$. 
Similarly, for every  $[a,b], [a',b'] \subset S^1$ satisfying that $\vert \overline{T((a,b))} \cap \overline{T((a',b'))} \vert = 1$ and $\min \{ \vert [a,b] \vert , \vert [a',b'] \vert \} \ge  c _0$, one can smoothly glue $(M_{[a,b]},\mathcal F_{[a,b]})$ and $(M_{[a',b']},\mathcal F_{[a',b']})$ along the leaves contained in $\{x \in \overline{T((a,b))} \cap \overline{T((a',b'))}, y > 0\}$, where $T$ is the circle map of Section \ref{S:2.1}. 

Let $(M_0,\mathcal F_0)$ be the foliated compact surface with boundary obtained by smoothly gluing  $(M_{I_j},\mathcal F_{I_j}) $ with $(M_{I_{(j+1 \mod r)}},\mathcal F_{I_{(j+1 \mod r)}})$ along the boundary components in $\{ y<0 \}$, and gluing $(M_{T(I_j)},\mathcal F_{T(I_j)}) $ with $(M_{T(I_k)}, F_{T(I_k)})$ along the boundary components in $\{ y>0 \}$ if $\vert \overline{T(I_j)} \cap \overline{T(I_k)} \vert =1$ for $j=1,\ldots ,r$. 
Let $M$ be the compact surface obtained by gluing the boundary of $M_0$ by the identity map on $S^1$, 
that is, $M$ is the quotient space of $M_0$ by the symmetrization of the relation $\sim$ given by $(x,y,z)\sim (x',y',z')$ if and only if $(x,y,z)= (x',y',z')$ or $x'=x$, $y=3$, $y'=-3$, $z=z'$.
We identify $(x,y,z) \in M_0$ with its equivalent class in $M$.


\subsection{Non-existence of length averages}
For each $x\in S^1$, let $\ell (x)$ be the length of the leaf of $\mathcal F_0$ through the point $(x,-3,1)$.
Then, $\ell$ is a bounded and positive function on $S^1$, and continuous except $D$, so we can apply the result in Section \ref{S:2.1} to the case $\rho=\ell$.
Let $\alpha_+:X_\ell \to \alpha_+(X_\ell)$
be the diffeomorphism defined as $\alpha_+(x,y)$ is the point in the leaf $\mathcal F_0((x,-3,1))$
and the distance between $\alpha_+(x,y)$ and $(x,-3,1)$ in $\mathcal F_0((x,-3,1))$ is $y$. 
Similarly, define $\alpha_-:X_\ell \to \alpha_-(X_\ell)$ with $(x,-3,-1)$ instead of $(x,-3,1)$.
Then, it is straightforward to see that
for any $t\in \mathbb R$ such that $0\le y+t<\ell (x)$,
the distance between $\alpha_\pm(x,y)$ and $\alpha_\pm(f^t(x,y))$ in $\mathcal F_0((x,-3,z))$ is $t$.
Therefore, it follows from the construction of $\{f^t\}_{t\in\mathbb R}$ that
\begin{equation}\label{eq:20240809}
\frac{1}{\vert B_r^{\mathcal F}(\alpha_\sigma(x,y)) \vert } \int _{B_r^{\mathcal F}(\alpha_\sigma(x,y))} \varphi (w) dw=\frac{1}{2r} \int^r_{-r} (\varphi\circ \alpha_\sigma) \circ f^t (x,y) \, dt
\end{equation}
for any $\sigma\in\{+,-\}$, $r>0$ and continuous function $\varphi :M\to\mathbb R$ (notice that $(x,y)$ is not a periodic point of $\{f^t\}_{t\in\mathbb R}$ because $T$ is minimal).

Let $\psi _\ell: X_{\ell}\to \mathbb R$ be the continuous function and let $R_\ell$ be the residual subset of $X_{\ell}$ given in Section \ref{S:2.1}.
Let $\varphi :M\to \mathbb R$ be the continuous function that is uniquely determined by
\[
\varphi (\alpha_\sigma(x,y))= \psi _\ell(x,y) \quad \text{for $(x,y)\in X_\ell$, $\sigma\in\{+,-\}$}.  
\]
Then, for any $(x,y,z)$  in the residual set
\[
R:=\left\{ \alpha_+(x,y)  : (x,y)\in R_\ell\right\}\cup\left\{ \alpha_-(x,y)  : (x,y)\in R_\ell\right\}
\]
of $M$,
the length average of $\varphi$ along the leaf $\mathcal F(x,y,z)$  does not exist: 
it follows from \eqref{eq:20240809} that if $(x,y,z)\in R$, then
\[
\frac{1}{\vert B_r^{\mathcal F}(x,y,z) \vert } \int _{B_r^{\mathcal F}(x,y,z)} \varphi (w) dw=
\frac{1}{2r} \int^r_{-r} \psi_\ell \circ f^t (x,y) \, dt,
\]
which does not converge in the limit $r\to\infty$ due to \eqref{eq:0819}.

\section{Proof of Proposition \ref{prop:2.2} and Theorem \ref{prop:pieces}}\label{s:thmc}

\subsection{Proof of Proposition \ref{prop:2.2}}

Let $X$ be a compact Riemannian manifold with pairwise equidistant boundary components $A_1,\ldots , A_k$ with $k \ge 2$. 
We will show that $k=2$. 
Denote by $R$ the distance between each pair $(A_i, A_j)$. 
We first show the following.
\begin{lemma}\label{lem:2.3}
Let $x\in A_i$, $y\in A_j$ satisfying $d(x,y)=R$ and let $\gamma :[0,R]\to X$ be
a parametrization of a shortest path from $x$ to $y$ with unit speed. 
Then, $\gamma$ is a smooth geodesic which
does not intersect with 
$\bigcup _{l=1}^k A_l$ except $x, y$, and orthogonal to $A_i$ at $x$ and to $A_j$ at $y$.
\end{lemma}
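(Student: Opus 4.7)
My plan is to establish the three conclusions in the order: (i) the image $\gamma((0,R))$ avoids $\bigcup_{l} A_l$, (ii) $\gamma$ is a smooth geodesic, (iii) $\gamma$ meets $A_i$ and $A_j$ orthogonally. The key tool throughout is the equidistant hypothesis $d(A_l, A_m) = R$ for $l \neq m$, combined with the triangle inequality and the first variation formula for arc length. Note that since $d(x,y)=R>0$, we necessarily have $i\neq j$, so the equidistant hypothesis applies to the pair $(A_i,A_j)$.

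\textbf{Step (i): Avoidance of the boundary.} Suppose, for contradiction, that $\gamma(t)\in A_l$ for some $t\in(0,R)$. Since $\gamma$ has unit speed, $d(x,\gamma(t))\le t$ and $d(\gamma(t),y)\le R-t$, and since their sum is at least $d(x,y)=R$, both inequalities must be equalities. If $l\neq i$, then $x\in A_i$ and $\gamma(t)\in A_l$ force $d(x,\gamma(t))\ge R$, contradicting $d(x,\gamma(t))=t<R$. If $l=i$, then $\gamma(t)\in A_i$ and $y\in A_j$ force $d(\gamma(t),y)\ge R$, contradicting $d(\gamma(t),y)=R-t<R$. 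Either way we reach a contradiction. A symmetric argument using $l=j$ is already covered by the case $l\neq i$.

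\textbf{Step (ii): Smoothness and the geodesic equation.} By (i), the restriction of $\gamma$ to the open interval $(0,R)$ takes values in the interior $X\setminus\partial X$. A unit-speed length-minimizing curve between two points in a Riemannian manifold is a smooth geodesic on any subinterval where it lies in the interior; this is a standard consequence of the local minimization property and the fact that smooth minimizers satisfy the geodesic equation. Since the geodesic equation is a smooth second-order ODE and the one-sided limits of $\gamma'$ exist at $0$ and $R$ (again by minimality), $\gamma$ extends as a smooth solution of the geodesic ODE on all of $[0,R]$.

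\textbf{Step (iii): Orthogonality via the first variation.} Let $\alpha:(-\varepsilon,\varepsilon)\to A_i$ be a smooth curve with $\alpha(0)=x$ and prescribed tangent vector $v:=\alpha'(0)\in T_xA_i$, and construct a smooth variation $\gamma_s$ of $\gamma$ with $\gamma_s(0)=\alpha(s)$ and $\gamma_s(R)=y$. Because $\gamma$ is a unit-speed geodesic by Step (ii), the first variation formula gives
\[
\left.\frac{d}{ds}\right|_{s=0} L(\gamma_s) \;=\; -\langle v,\gamma'(0)\rangle.
\]
On the other hand, $L(\gamma_s)\ge d(\alpha(s),y)\ge R=L(\gamma)$ since $\alpha(s)\in A_i$ and $y\in A_j$ with $i\neq j$, so $s=0$ minimizes $L(\gamma_s)$ and the derivative above vanishes. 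Since $v\in T_xA_i$ was arbitrary, $\gamma'(0)\perp T_xA_i$. The analogous argument at the endpoint $y$ (varying the endpoint in $A_j$ and keeping $x$ fixed) yields $\gamma'(R)\perp T_yA_j$.

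\textbf{Expected obstacle.} The only subtle point is the smoothness of $\gamma$ up to the boundary in Step (ii): a priori $\gamma$ is only a Lipschitz length-minimizer, and one must justify that the interior regularity extends to the closed interval so that the first variation formula can be applied with boundary terms as stated. This is handled by invoking existence and uniqueness of the geodesic ODE together with the fact that one-sided velocity vectors of a length-minimizing curve exist at the endpoints, allowing $\gamma$ to be identified with the unique geodesic having these initial data.
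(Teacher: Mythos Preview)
Your proof is correct. Steps (i) and (ii) match the paper's argument essentially line for line (the paper just says ``otherwise, the distance between some pair $(A_{i'},A_{j'})$ should be strictly less than $R$'' and then concludes smoothness immediately). The only genuine difference is in Step (iii): the paper obtains orthogonality by considering the distance function $h(z)=d(x,z)$, observing that $h|_{A_j}$ attains its minimum $R$ at $y$ so that $T_yA_j\subset\ker D_yh$, and then invoking the Gauss Lemma to conclude that $\ker D_yh\perp\gamma'(R)$. You instead vary the endpoint inside $A_i$ (resp.\ $A_j$) and read off orthogonality from the boundary term of the first variation formula. These are two standard and essentially equivalent routes to the same conclusion; the Gauss Lemma is itself a consequence of the first variation formula, so the paper's argument is a slight repackaging of yours. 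Your approach has the minor advantage of not needing to assert smoothness of the distance function near $A_j$, at the cost of having to justify smoothness of $\gamma$ up to the endpoints in order to apply the first variation with boundary terms (which you flag explicitly).
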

\begin{proof}
Note that the shortest path does not intersect with $\bigcup _{l=1}^k A_l$ except $x=\gamma (0)$ and $y=\gamma (R)$.  (Otherwise, the distance between some pair $(A_{i'}, A_{j'})$ should be strictly less than $R$.) 
Thus, $\gamma$ is a smooth geodesic.

Define $h:X\to \mathbb R$ by $h(z)=d(x,z)$. 
Then, $h$ is a smooth function around $A_j$ and  the restriction $h\vert _{A_j}$ takes the minimal value $R$ at $y$. Thus, $y$ is a critical point of $h\vert _{A_j}$. 
In particular, 
 $T_y A_j$ belongs to 
  $\mathrm{Ker} \, D_yh$.
On the other hand, since $\gamma $ is a geodesic, Gauss Lemma (\cite{MR182537,klingenberg1995riemannian}) implies that $\mathrm{Ker}\, D_yh$ is orthogonal to $\frac{d\gamma }{dt}$ at $y$.
Hence, $A_j$ and $\gamma$ are orthogonal at $y$. Similarly, $A_i$ and $\gamma$ are orthogonal at $x$.
\end{proof}

Let us prove Proposition \ref{prop:2.2}. 
By the compactness, for each $x\in A_1$ and $j\ge 2$, there is $y_j\in A_j$ such that $d(x,y_j)=R$.
Denote by $\gamma _j$ the geodesic given in Lemma \ref{lem:2.3} connecting $x$ and $y_j$.
Then, $A_1$ and $\frac{d\gamma _j}{dt}$ are orthogonal at $x$ for each $j$ by Lemma \ref{lem:2.3}.
Hence, it follows from the uniqueness of the geodesic starting at a given point with a given speed that $\gamma _j = \gamma _{j'}$ for any $j, j'$.
That is, $k=2$.

\subsection{A pair of pants with equidistant boundary components}\label{ss:4.2}
The rest of Section \ref{s:thmc} is devoted to the proof of  Theorem \ref{prop:pieces}. 
We first consider the compact smooth surface $X_0$ with corners, contained in $\mathbb R^2$, given in Figure \ref{fig:bifurcation}.
Here, the solid line means the boundary (with corners) of $X_0$, and the distance between the two closest corners is $1$.

   \begin{figure}[h]
\begin{center}
\includegraphics[scale=0.4]{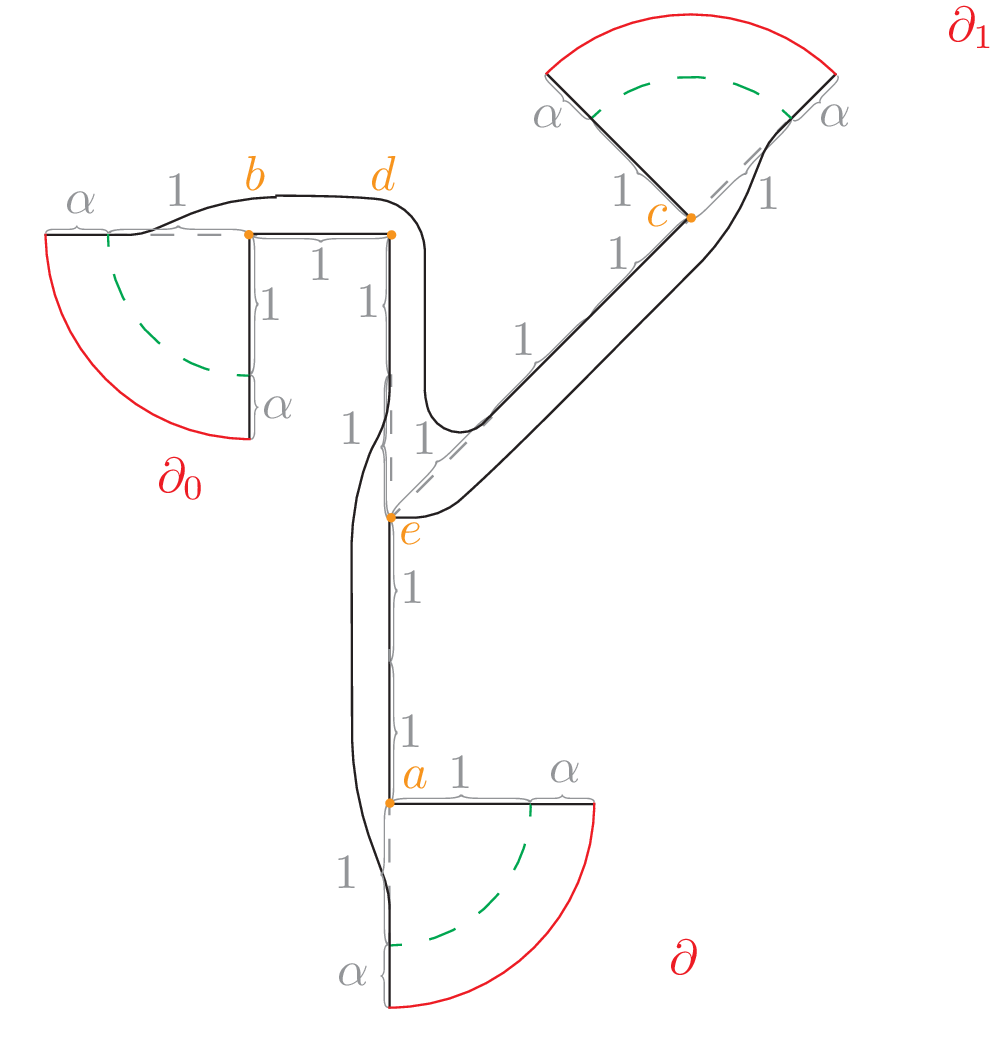}
\end{center}
\caption{Compact surface $X_0$ with corners with equidistant property}
\label{fig:bifurcation}
\end{figure}

 More precisely, let $T_a$ be the translation by a vector $a\in \mathbb R^2$ (i.e.~$T_a(z) =z+a$) and $R_\theta$ the rotation of angle $\theta$, $\tilde F=\{ (x,y)\in \mathbb R^2 : x^2 +y^2 \le (1+\alpha )^2, \; x, y \ge 0\}$ with some $\alpha >0$, and
 \[\label{pass20241017}
 \tilde P=\left\{(x,y)\in \mathbb R^2 : -1\le y \le 3, \; \chi  (y) \le x \le 0\right\},
 \]
where $\chi : \mathbb R\to [-\frac{1}{3},0]$ is a $\mathcal C^\infty$ function such that $\chi  (y)<0$ if and only if $y\in (-1, 3)$, and $\chi  ^{(k)}(-1)=\chi  ^{(k)}(3)=0$ for all $k\ge 1$.  
 Let
 \begin{itemize}
 \item $e=(0,0)$, $ a=(0,-2)$, $d=(0,2)$, $b=(-1,2)$,  $ c=(\frac{3}{\sqrt 2},\frac{3}{\sqrt 2})$;
 \item $ F=T_{ a} \circ R_{\frac{3\pi}{2}} (\tilde F)$, $ F_0=T_{b}\circ R_\pi (\tilde F)$, $ F_1=T_{ c} \circ R_{\frac{\pi}{4}} (\tilde F)$;
 \item $ P=T_{ a} (\tilde P)$, $ P_1=T_{ c}\circ R_{\frac{3}{4}\pi}(\tilde P) $.
 \end{itemize}
 Furthermore, let $\chi_1 :\mathbb R\to \mathbb R$ be a $\mathcal C^\infty$ function such that $\chi_1 (x) >2$ on $(-2,0]$, $\chi_1 (x) > x$ on $(0,\frac{3}{\sqrt 2})$, $\chi_1  ^{(k)}(-2)=\chi_1  ^{(k)}(\frac{3}{\sqrt 2})=0$ for all $k\ge 1$, and the distance  between $d$ and $c$ in 
 \[
 Q:=\left\{ -2 \le x \le 0, \; 2\le y \le \chi_1 (x)\right\} \cup \Big\{  0 \le x \le \frac{3}{\sqrt 2}, \; x\le y \le \chi_1 (x)\Big\} 
 \]
 is $4$.
Set 
  $X_0 =  F \cup  F_0 \cup  F_1 \cup  P \cup   P_1 \cup Q$. 
 Then, by construction, it is straightforward to see that
the distances between 
\begin{itemize}
\item any point in $ \partial :=T_{ a} \circ R_{\frac{3\pi}{2}} (\tilde \partial )$ and any point in $ \partial _0:=T_{ b} \circ R_\pi (\tilde \partial )$, where $\tilde \partial = \{ (x,y)\in \mathbb R^2 : x^2 +y^2 = (1+\alpha )^2, \; x, y \ge 0\}$;
\item any point in $ \partial $ and any point in $ \partial _1:=T_{ c} \circ R_{\frac{\pi}{4}} (\tilde \partial )$;
\item any point in $ \partial _0$ and any point in $ \partial _1$
\end{itemize}
are $7+2\alpha$. In particular, $X_0$ has pairwise equidistant boundary components. 
 
The obstruction to make a compact surface with pairwise equidistant boundary components from copies of $X_0$ by gluing the copies of the boundary components $ \partial ,  \partial _0,   \partial _1$ of $X_0$ is that 
 the boundary components cannot be glued smoothly as a subset of $\mathbb R^2$ (with the Euclidean distance).
 Thus, we modify the construction of $X_0$ as a compact surface in $\mathbb R^3$ as follows.
Instead of $\tilde F$, consider
\[
\tilde F' =\left\{ (\rho(r,\theta ) , \chi _2(r)) \in \mathbb R^3 :  0\le \theta \le \frac{\pi}{2}, \; 0\le r\le (1+\alpha )\right\}
\]
where $\rho (r,\theta )=(r\cos \theta,r\sin \theta )$ and $\chi _2: [0,2) \to [0,-1]$ is a non-increasing $\mathcal C^\infty$ function such that $\chi _2(r) =0$ if and only if $r\in [0,1]$, $\lim _{r\to 2-0}\chi _2(r) =-1$ and the inverse map $\psi: [0,-1 )\to [1,2)$ of the restriction of $\chi _2$ on $[1,2)$ satisfies that $\lim _{z\to -1-0}\psi ^{(k)}(z)=0$ for all $k\ge 1$, as same as the smooth function $\chi_3$ in \S\ref{sec:3.3}. 
See  Figure \ref{fig:bifurcation2}.

 \begin{figure}[h]
\begin{center}
\includegraphics[scale=0.4]{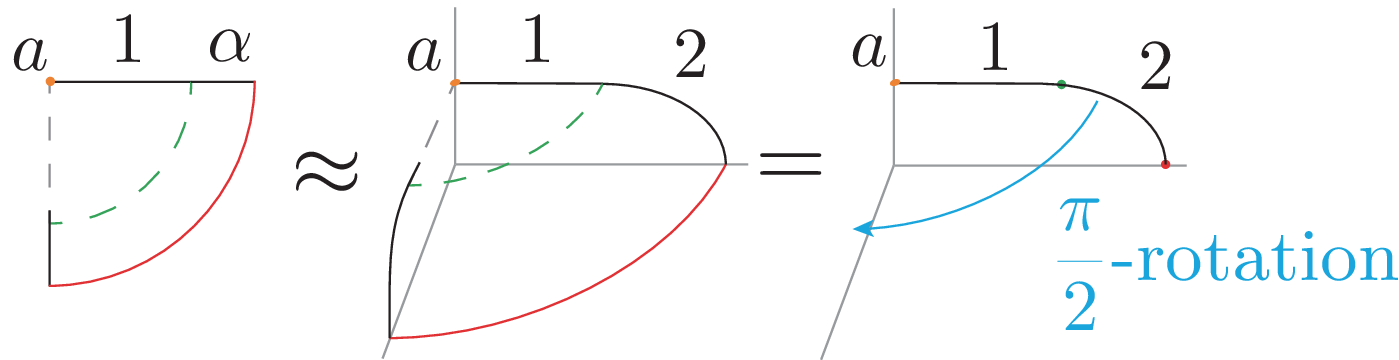}
\end{center}
\caption{Deformation of $X_0$ to obtain $X_0'$}
\label{fig:bifurcation2}
\end{figure}

Let $T_a'(x,y,z)= (T_a(x,y),z)$ and $R_\theta '(x,y,z) =(R_\theta (x,y),z)$, and define $X_0'$ as in the construction of $X_0$ with $F', T_a', R_\theta '$ instead of $F, T_a, R_\theta$. 
Similarly, define boundary components $ \partial ',  \partial _0',   \partial _1'$ of $X_0'$ as in the construction of $ \partial ,  \partial _0,   \partial _1$. 
By construction, the boundary components $ \partial ',  \partial _0',   \partial _1'$ are pairwise equidistant and can be glued smoothly.

\subsection{A compact surface with equidistant boundary components}

For  each $k\ge 2$, we will make a compact smooth Riemannian surface $X$ with corners and with boundary including 
pairwise equidistant codimension one smooth submanifolds $A_1, \ldots , A_k$ of $X$ each pair of which can be glued smoothly.
In the case when $k=2$, one can just take $X$ as the cylinder $S^1 \times [0,1]$, so we may assume $k\ge 3$.

Let $n$ be a positive integer  satisfying that $k\le 2^n+1$, and set  $N=1+2+\dots +2^{n}$.
Consider $N$-copies $X^{m,\ell}$ of $X_{0}'$ with $0\le m\le n$ and $0\le \ell \le 2^{m}-1$. 
Let $ \partial ^{m,\ell },  \partial _{0}^{m,\ell },   \partial _{1}^{m,\ell }$ be the boundary components of $X^{m,\ell}$ corresponding to $ \partial ',  \partial _0',   \partial _1'$.
We glue $\partial _s^{m,\ell }$ with $\partial ^{m+1, 2\ell +s}$ with $s\in \{0,1\}$
by the canonical equivalent relation.
For example, glue $\partial _0^{m,\ell }$ with $\partial ^{m+1, 2\ell}$ by  
\[
T_{a}^{m,\ell}\circ R_{\frac{3\pi}{2}} ^{m,\ell } (x,y,z) \sim T_{ b} ^{m+1,2\ell }\circ R_{\pi} ^{m+1,2\ell } (x,y,z)  \quad \text{for $(x,y,z) \in \tilde \partial '$},
\]
where $T_{\alpha}^{m,\ell }$, $R_{\theta }^{m,\ell}$ are the $(m,\ell)$-th copy of $T_\alpha'$,  $R_\theta '$ and $\tilde \partial '$ is a boundary component of $\tilde F'$ corresponding to $\tilde \partial$.
Denote the resulting compact surface by $X_1$.
By construction, the boundary of $X_1$ includes $2^n+1$ codimension one smooth submanifolds $\partial ^0, \partial ^1, \ldots , \partial ^{2^n}$ such that the distance between any point in $\partial ^0$ and any point in $\partial ^j$ with $j\in \{1,\ldots ,2^n\}$ is $nr_0$, where $r_0$ is the distance betweeb $\partial '$ and $\partial '_0$.
See Figure \ref{fig:plug}.

 \begin{figure}[h]
\begin{center}
\includegraphics[scale=0.26]{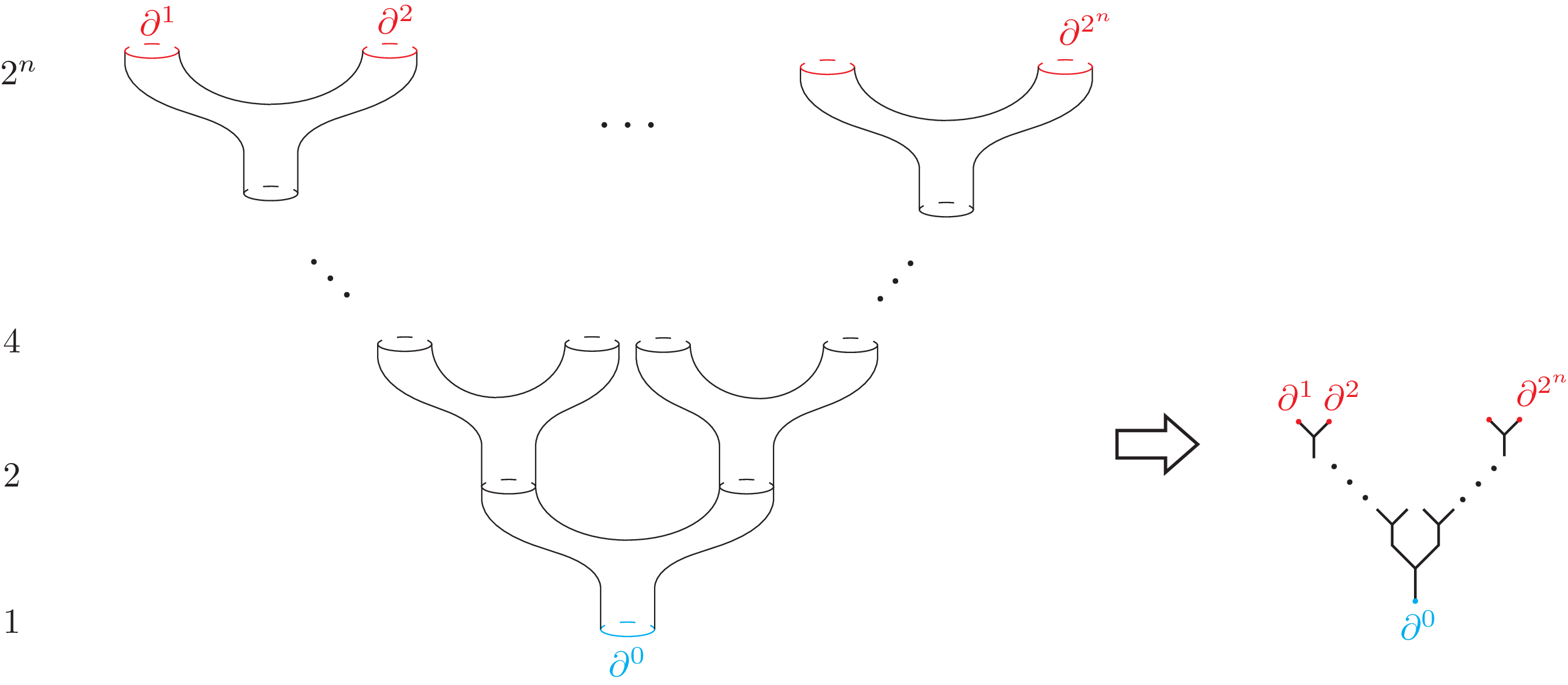}
\end{center}
\caption{Gluing $N$-copies of $X_0'$ to obtain $X_1$}
\label{fig:plug}
\end{figure}

Finally, take $k$-copies $X_{(0)}, \ldots , X_{(k-1)}$ of $X_1$ and denote the boundary components of $X_{(j)}$ corresponding to $\partial ^0, \ldots , \partial ^{k-1}$ (recall $k-1\le 2^n$)   by
$\partial ^0_{(i)},  \ldots , \partial ^{k-1}_{(i)}$.
Gluing $\partial ^{i}_{(j)}$ with $\partial ^{j+1}_{(i)}$ for every $i, j \in \{0,\ldots ,k-1\}$ with $i>j$, we obtain a compact surface $X$ with the desired equidistant property, as depicted in Figure \ref{fig:plug_comb}.
In fact, for every $i, j \in \{0,\ldots ,k-1\}$, the distance between any point in $\partial ^0_{(j)}$ and any point in $\partial ^0_{(i)}$ is $2nr_0$, and one can smoothly glue $\partial ^0_{(j)}$ and $\partial ^0_{(i)}$. 

 \begin{figure}[h]
\begin{center}
\includegraphics[scale=0.4]{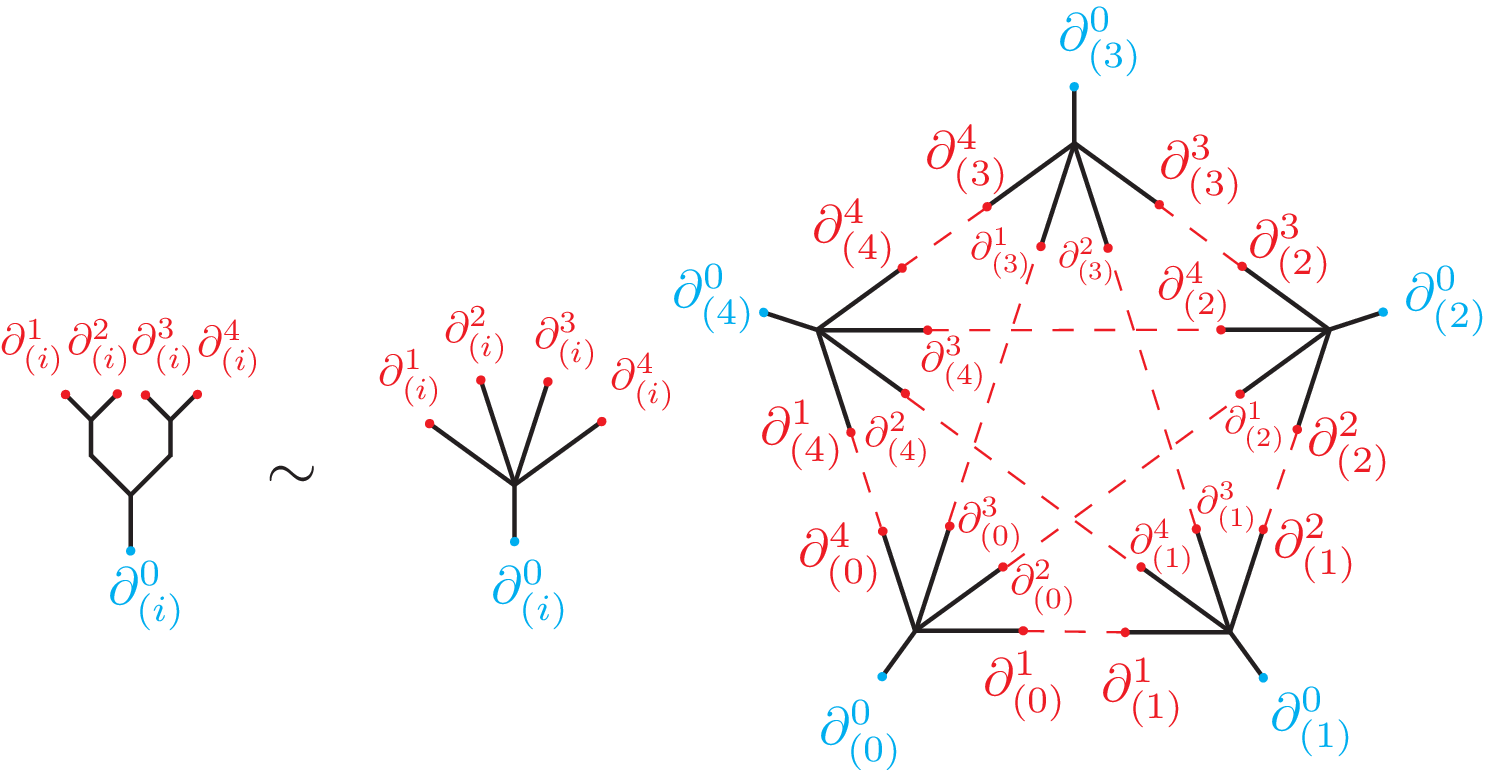}
\end{center}
\caption{Gluing $k$-copies of $X_1$ to obtain $X$ (in the case $k=5$)}
\label{fig:plug_comb}
\end{figure}

\subsection{Thin legs}
The compact surface $X$ in the previous section does not necessarily have thin legs with respect to $\{ A_i\}_{i=1}^k$, but one can easily modify it to get the thin property.
In fact, consider the modification $\widetilde X_0'$ of $X_0'$ by making the distance between the points $a$ and $e$ of $X_0$ sufficiently larger 
(recall page \pageref{pass20241017} and notice that the distance of $\partial$, $\partial _0$ and the distance of $\partial$, $\partial _0$ are still same), and in the construction of $X_1$, glue a copy of $\widetilde X_0'$ (instead of that of $X_0'$) in the step of gluing $X^{0,0}$.
Then, by taking the interior of the union of all elements except the $X_{0,0}$ part in every $X_{(j)}$'s (copy of $X_1$) as $V$ of Definition \ref{dfn:0427} (for the example $k=5$ in Figure \ref{fig:plug_comb}, $V$ is the area inside the pentagon),  the desired estimates $m_1+(D-R)<2m_0$ and $2m_1<R+m_0$ in Definition \ref{dfn:0427} immediately follow. 

\section{Proof of Theorem \ref{mainthm:4}}\label{s:5}

Let $X$ be a compact smooth Riemannian manifold $X$ with corners and boundary consisting of
pairwise equidistant codimension one submanifolds 
$\partial  _1^\pm , \ldots , \partial _k^\pm $ with $k\ge 1$ with distance $R$.
Let $f=(f_a)_{a\in G}$  be a finitely generated group action by diffeomorphisms on a compact Riemannian manifold $Y$ with generators $ \alpha_1^\pm , \alpha_2^\pm , \ldots , \alpha_k^\pm$, 
and $(M, \mathcal F)$ the foliated smooth manifold 
with corners 
over the fibration $M=M(X,Y,f)$. 
Recall that we identified an equivalent class of $M$ with its representative (e.g.~$X\times \{y\}$ for $(X\times \{y\})/\sim$).

\subsection{Small boundary case}

Given $r>0$,  let $n$ be the positive integer satisfying $nR\le r <(n+1)R$.
Then, it holds that
\[
 \bigsqcup _{a \in G_{n-1}(y)} X \times \{ f_a (y)\} \subset B_r^{\mathcal F}(x,y) \subset \bigsqcup _{a \in G_{n+1}(y)} X \times \{ f_a (y)\}.
\]
Therefore,
\[
\vert X\vert \, \vert G_{n-1}(y)\vert  \le \vert  B_r^{\mathcal F}(x,y) \vert \le \vert X\vert \, \vert G_{n+1}(y)\vert 
\]
and
\[
 \sum _{a \in G_{n-1}(y)} \widetilde \varphi \circ f_a (y)  \le \int _{  B_r^{\mathcal F}(x,y) }\varphi (w) dw \le  \sum _{a \in G_{n+1}(y)} \widetilde \varphi \circ f_a (y),
\]
where $\widetilde \varphi$ is defined as in \eqref{eq:20240406}.
Hence, 
\begin{multline*}
\frac{1}{\vert B_r^{\mathcal F}(x,y) \vert } \int _{B_r^{\mathcal F}(x,y)} \varphi (w) dw \le \frac{1}{\vert X\vert \, \vert G_{n-1}(y)\vert } \sum _{a \in G_{n+1}(y)} \widetilde \varphi \circ f_a (y)\\
 = \frac{1}{\vert X\vert }  \left( 1 - \frac{\vert G_{n+1}(y)\vert - \vert G_{n-1}(y) \vert }{\vert G_{n+1}(y)\vert } \right)^{-1} \frac{1}{\vert G_{n+1}(y)\vert } \sum _{a \in G_{n+1}(y)} \widetilde \varphi \circ f_a (y),
\end{multline*}
and similarly
\begin{multline*}
\frac{1}{\vert B_r^{\mathcal F}(x,y) \vert } \int _{B_r^{\mathcal F}(x,y)} \varphi (w) dw \\
\ge\frac{1}{\vert X\vert }  \left( 1 - \frac{\vert G_{n+1}(y)\vert - \vert G_{n-1}(y) \vert }{\vert G_{n+1}(y)\vert } \right) \frac{1}{\vert G_{n-1}(y)\vert } \sum _{a \in G_{n-1}(y)} \widetilde \varphi \circ f_a (y).
\end{multline*}
Notice that, since $\vert G_{n+1}(y)\vert \ge \vert G_n(y)\vert$, 
\[
\frac{\vert G_{n+1}(y)\vert -\vert G_{n-1}(y)\vert}{\vert G_{n+1}(y)\vert } \le \frac{\vert G_{n+1}(y)\vert -\vert G_{n}(y)\vert}{\vert G_{n+1}(y)\vert } +\frac{\vert G_{n}(y)\vert -\vert G_{n-1}(y)\vert}{\vert G_{n}(y)\vert }, 
\]
so that   $\lambda_G(y)=0$  implies that  $\lim _{n\to \infty} \frac{\vert G_{n+1}(y)\vert -\vert G_{n-1}(y)\vert}{\vert G_{n+1}(y)\vert } =0$.
From these observations, we immediately get the conclusion of Theorem \ref{mainthm:4}~(1).

\subsection{ Large  boundary case}\label{S:41}

Assume that $\{\partial ^\pm_i\}_{i=1}^k$ is thin, that is, 
 one can find  a non-empty open set $V\subset X$ such that $m_1+(D-R)<2m_0$ and $2m_1<R+m_0$ with $D=\sup\{ d(x,x') : x\in X, \; x'\in \partial\}$, $m_0=\inf\{ d(x,x') : x\in V, \; x'\in \partial \}$ and $m_1=\sup\{ d(x,x') : x\in V, \; x'\in \partial\}$,
 where $\partial :=\bigcup_{i=1}^k (\partial ^+_i\cup \partial ^-_i)$.
Take $R_0\in (m_1+(D-R), 2m_0)$ and $R_1 \in (2m_1, R+m_0)$. 
Set
\begin{align*}
r_n:=R_0+(n-1)R , \quad 
 s_n:=R_1 +(n-2)R,
\end{align*}
so that
\begin{equation}\label{eq:0407d}
\begin{split}
& m_1+(n-2)R+D< r_n < m_0+(n-1)R+m_0, \\
&  m_1+(n-2)R+m_1 < s_n < (n-1)R+m_0.
\end{split}
\end{equation}
Then, for any $x\in V$, $y\in Y$, we have
\begin{itemize}
\item[(i-a)] $X\times \{ f_a(y)\}\subset B_{r_n}^{\mathcal F}(x,y)$ for any $a\in G_n(y)$;
\item[(i-b)] $(V\times \{ f_a(y)\})\cap B_{r_n}^{\mathcal F}(x,y)=\emptyset$ for any $a\in G_{n+1}(y)\setminus G_n(y)$;
\item[(ii-a)] $V\times \{ f_a(y)\}\subset B_{s_n}^{\mathcal F}(x,y)$ for any $a\in G_n(y)$;
\item[(ii-b)] $(X\times \{ f_a(y)\})\cap B_{s_n}^{\mathcal F}(x,y)=\emptyset$ for any $a\in G_{n+1}(y)\setminus G_n(y)$.
\end{itemize}
In fact, $B_{m_1}(x,y)$ contains $X\times \{y\}$, implying $B_{m_1+(n-2)R}(x,y)$ includes $\partial \times \{f_a(y)\}$ for each $a\in G_{n-1}(y)$. Thus one gets $X\times \{f_a(y)\} \subset B_{m_1+(n-2)R+D}(x,y)\subset B_{r_n}(x,y)$  for each $a\in G_{n}(y)$, 
which implies (i-a).
For (i-b), notice that for each $a\in G_{n+1}(y)\setminus G_n(y)$ and $x'\in V$, by denoting minimizers of $\min_{x''\in\partial}d_X(x',x'')$ and  $\min_{x''\in\partial}d_X(x,x'')$ by $x_0$ and $x_1$, respectively, one has
\begin{align*}
&d_{(x,y)}
((x',f_a(y)),(x,y))\\
& = 
 d_{(x,y)}((x',f_a(y)), (x_0,f_a(y))) +  d_{(x,y)}((x_0,f_a(y)),(x_1,y)) +  d_{(x,y)}((x_1,y),(x,y)) \\
 &\ge m_0+(n-1)R+m_0>r_n,
\end{align*}
where $d_{(x,y)}$ is the distance on $\mathcal F(x,y)$.
(ii-a) and (ii-b) can be shown in similar manners (recall \eqref{eq:0407d}).

Given a point $(x',y')$ in the boundary of $B_{s_n}^{\mathcal F}(x,y)$, it follows from (ii-a), (ii-b) that there is $a\in G_{n}(y)\setminus G_{n-1}(y)$ such that $y'=f_a(y)$, and one can find $x_0, x_1\in \partial$ such that 
\begin{align*}
s_n&=d_{(x,y)}
((x',y'),(x,y))\\
& = 
 d_{(x,y)}((x',y'), (x_0,y')) +  d_{(x,y)}((x_0,y'),(x_1,y)) +  d_{(x,y)}((x_1,y),(x,y)).
\end{align*}
For such a $x_1$, we have
\begin{align}\label{eq:0407d2}
d_{(x,y)}((x_1,y),(x,y)) \le s_n - m_0 -(n-2)R =R_1-m_0 <R.
\end{align}
Hence, we introduce
\begin{align*}
K:= \vert X\vert ^{-1}\inf_{x\in \partial} \left\vert  B_{R_1-m_0}^X(x)\right\vert ,
\end{align*}
which is strictly smaller than $1$ due to the strict inequality of \eqref{eq:0407d2}, where $B_r^X(x)$ is the ball in $X$ of radius $r$ at $x$.

Let $\widetilde{\varphi}$ be a continuous non-negative function on $X $ such that 
\begin{equation}\label{eq:0304c}
\widetilde{\varphi} (x)=0 \quad \text{if  $x\not\in V$}
\end{equation}
and
\begin{equation}\label{eq:0304d}
\int _{X } \widetilde{\varphi} (x) dx= \vert X\vert .
\end{equation}
 Define a continuous function $\varphi : M\to \mathbb R$ by 
\begin{equation}\label{eq:1026p}
\varphi(x,y) = \widetilde{\varphi}(x) \quad \text{ for $(x,y)\in X \times Y$}.
\end{equation}

Fix $x\in V$ and $y\in Y$. Assume that $\lambda _G(y)>0$.
For each $n\in \mathbb N$, it follows from (i-a), (i-b), (ii-a), (ii-b), \eqref{eq:0304c} and \eqref{eq:0304d} that
\begin{align*}
 \int _{B_{r_n}^{\mathcal F}(x,y)} \varphi (w) dw = \int _{B_{s_n }^{\mathcal F}(x,y)} \varphi (w) dw  =\sum _{a \in G_n(y)} \vert X\vert
=\vert X\vert \left\vert G_{n}(y)   \right\vert .
 \end{align*}
On the other hand, it follows from (i-a) that
\[
\vert B_{r_n}^{\mathcal F}(x,y) \vert \ge  \sum _{a \in G_{n}(y) } \vert X\vert =   \vert X\vert \vert G_{n}(y)\vert .
\]
Furthermore, 
it follows from (ii-a), (ii-b), and \eqref{eq:0407d2} that
\begin{align*}
\vert B_{s_n}^{\mathcal F}(x,y) \vert 
&\le \sum _{a \in G_{n-1}(y) } \vert X\vert + \sum _{a \in G_n(y)\setminus G_{n-1}(y) }   K\vert X\vert\\
&= \vert X\vert\left\vert G_{n}(y)     \right\vert - (1 -K)\vert X\vert   \left\vert  G_{n}(y)  \setminus G_{n-1}(y)\right\vert.
\end{align*}
Therefore,
\begin{align*}
 \limsup _{r\to\infty} \frac{1}{\vert B_{r}^{\mathcal F}(x,y) \vert } \int _{B_{r}^{\mathcal F}(x,y)} \varphi (w) dw 
 \le\limsup _{n\to\infty} \frac{1}{\vert B_{r_n}^{\mathcal F}(x,y) \vert } \int _{B_{r_n}^{\mathcal F}(x,y)} \varphi (w) dw  \le 1,
\end{align*}
and by fixing an increasing sequence $\{ n_k\}_{k\ge 1}$ such that $\lim _{k\to \infty}\frac{ \left\vert  G_{n_k}(y)  \setminus G_{n_k-1}(y)\right\vert}{ \left\vert  G_{n_k}(y)   \right\vert} =\lambda _G(y)$, we get
\begin{align*}
 \liminf _{r\to\infty} \frac{1}{\vert B_{r}^{\mathcal F}(x,y) \vert } \int _{B_{r}^{\mathcal F}(x,y)} \varphi (w) dw &\ge \liminf _{k\to\infty} \frac{1}{\vert B_{s_{n_k}}^{\mathcal F}(x,y) \vert } \int _{B_{s_{n_k}}^{\mathcal F}(x,y)} \varphi (w) dw \\
& \ge \liminf _{k\to\infty} \left( 1 - (1 -K)\frac{ \left\vert  G_{n_k}(y)  \setminus G_{n_k-1}(y)\right\vert}{ \left\vert  G_{n_k}(y)   \right\vert} \right)^{-1}\\
& =\left(  \limsup _{k\to\infty} \left( 1 - (1 -K) \frac{ \left\vert  G_{n_k}(y)  \setminus G_{n_k-1}(y)\right\vert}{ \left\vert  G_{n_k}(y)   \right\vert} \right)\right)^{-1}\\
& =\frac{1}{  1 - (1 -K) \lambda_G(y)}.
\end{align*}
Finally, notice that
\[
1< \frac{1}{1 -(1- K)  \lambda _G (y)}
\]
because of the big boundary property $\lambda _G(y) >0$.
These estimates complete the proof of Theorem \ref{mainthm:4}~(2).

\section{Proof of Theorem \ref{mainthm:2} and Corollary \ref{cor:y}}\label{s:6}

\subsection{A pair of pants}
In this subsection,
 we construct a pair of pants with a good Riemannian metric.
This is essential in the construction of a model of leaves of the foliation of the theorem, which will be done in the next subsection.

For a smooth manifold $M$ and a Riemannian metric $g$
 with possible singularities of cone type,
 we write $\|v\|_g$ for the length of a tangent vector $v$
 in the tangent bundle $TM$
 and $|\gamma|_g$ for the length of a piecewise smooth curve $\gamma$ on $M$.
We also write $g_1 \leq g_2$ for two metrics $g_1$ and $g_2$ on $M$
 if $\|v\|_{g_1} \leq \|v\|_{g_2}$ for any $v \in TM$.
In this case, we have
 $|\gamma|_{g_1} \leq |\gamma|_{g_2}$ for any piecewise smooth curve $\gamma$.

First, we define a building block $P$ of our construction.
We define a closed subset $D$ of $\RR^2$ by
\begin{align*}
 D & =([-2,2] \times (-\infty,0]) \cup
  \left(\left[-\frac{3}{2},-\frac{1}{2} \right] \times [0,+\infty)\right)
  \cup
  \left(\left[\frac{1}{2},\frac{3}{2} \right] \times [0,+\infty)\right)
\end{align*}
Let $\sim$ be an equivalence relation on $D$ generated
 by the following relations:
\begin{itemize}
 \item $(-2,y) \sim (2,y)$ for any $y \leq 0$.
 \item $(-\frac{3}{2},y) \sim (-\frac{1}{2},y)$
 and $(\frac{1}{2},y) \sim (\frac{3}{2},y)$ for $y \geq 0$.
 \item $(x,0) \sim (-2-x,0)$ for $x \in [-\frac{1}{2},0]$
 and  $(x,0) \sim (2-x,0)$ for $x \in [0,\frac{1}{2}]$
\end{itemize}
We denote the quotient space $D/{\sim}$ by $P$
 and write $\pi$ for the quotient map from $D$ to $P$.
The space $P$ is an open pair of pants.
See Figure \ref{fig:pants}.
\begin{figure}[ht]
\begin{center}
\includegraphics[scale=0.7]{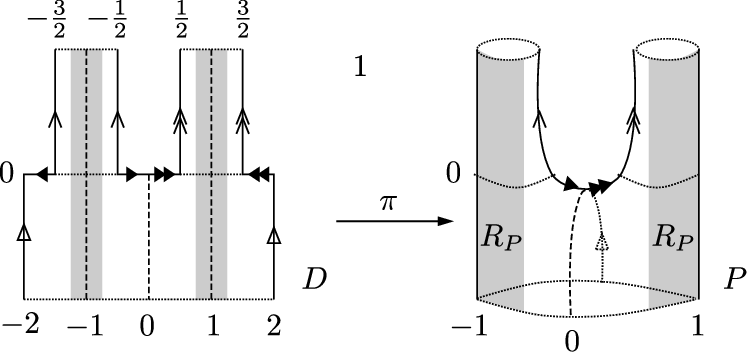}
\end{center}
\caption{A pair of pants $P$}
\label{fig:pants}
\end{figure}
Fix a smooth function $\rho$ on $\RR$ such that
 $\frac{1}{4} \leq \rho(y) \leq 1$ for any $y \in \RR$,
 $\rho(y) = 1$ if $y \geq -\frac{1}{2}$,
 and $\rho(y)=\frac{1}{4}$ if $y \leq -1$.
We define a Riemannian metric $g_D$ on $\RR^2$ by
$g_D=\rho(y)^2 dx^2+dy^2$.
This metric is invariant under the horizontal translation
 $(x,y) \mapsto (x+a,y)$ on $\RR^2$ for any $a \in \RR$ 
 and the rotation $(x,y) \mapsto (-x,-y)$
 on $\RR \times (-\frac{1}{2},\frac{1}{2})$.
This implies that the metric $g_D$ induces a Riemannian metric $g^*_P$
 on $P$ with two singularities at
$\pi(-\frac{1}{2},0)=\pi(-\frac{3}{2},0)$ and
 $\pi(\frac{1}{2},0)=\pi(\frac{3}{2},0)$.
For $r>0$, we put
\begin{equation*}
 Z(r)=\bigcup_{a=\pm \frac{1}{2},\pm\frac{3}{2}} \pi\left(
  \left\{(x,y) \in D \mid (x-a)^2+y^2\leq r^2 \right\}
 \right).
\end{equation*}
By using the partition of unity, we smoothen the cone singularities
 and take a smooth Riemannian metric $\tilde{g}_P$ on $P$
 such that $\tilde{g}_P=g_P^*$ in $P \setminus Z(\frac{1}{10})$.
 By the compactness of $Z(r)$, we may assume that
 there exist a non-negative smooth function $\chi$ on $P$
 and a positive number $\Delta$ such that
 the support of $\chi$ is contained in $Z(\frac{1}{9})$
 and $g_P^* \leq (1+\chi)\tilde{g}_P \leq (1+\Delta) g_P^*$.
We define the smooth metric $g_P$ on $P$ by $g_P=(1+\chi)\tilde{g}_P$, so that
\begin{equation*}
 g_P^* \leq g_P \leq (1+\Delta) g_P^*.
\end{equation*}
Let $d_P$ be the distance on $P$ induced by the metric $g_P$.
We define a function $h_P$ on $P$ by 
\begin{equation}\label{eq:0819b}
h_P(\pi(x,y))=y.
\end{equation}
It is easy to see that $h$ is well-defined and continuous.
\begin{lemma}
\label{lemma:length P} 
$d_P(p,q) \geq |h_P(p)-h_P(q)|$ for any $p,q \in P$.
\end{lemma}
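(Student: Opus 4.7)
\smallskip
\noindent\textbf{Proof proposal.}
The plan is to show that the coordinate function $h_P$ is $1$-Lipschitz with respect to $d_P$, by lifting curves to the fundamental domain $D$ and using that the metric $g_D = \rho(y)^2 dx^2 + dy^2$ dominates $dy^2$.

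First, I would check that $h_P$ is well-defined, which reduces to checking that the map $h_D\colon D \to \RR$, $h_D(x,y)=y$, is constant on each equivalence class under $\sim$. This is immediate from the defining relations: the two horizontal identifications at $y \leq 0$ and $y\geq 0$ only glue points sharing the same $y$-coordinate, and the two arc identifications at $y = 0$ glue points whose $y$-coordinates are both $0$. Hence $h_P$ descends and satisfies $h_P \circ \pi = h_D$; continuity follows since $h_D$ is continuous and $\pi$ is a quotient map.

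Next, I would prove the pointwise Lipschitz bound on $D$: for every $(x,y) \in D$ and $v=(v_x,v_y) \in T_{(x,y)}\RR^2$,
\begin{equation*}
 |dh_D(v)|^2 = v_y^2 \leq \rho(y)^2 v_x^2 + v_y^2 = \|v\|_{g_D}^2,
\end{equation*}
so $h_D$ is $1$-Lipschitz with respect to $g_D$. Since $g_P^* \leq g_P$, any piecewise smooth curve $\gamma$ in $P$ satisfies $|\gamma|_{g_P} \geq |\gamma|_{g_P^*}$, so it suffices to control $|\gamma|_{g_P^*}$.

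The main (mild) obstacle is globalizing the Lipschitz estimate across the identifications. Given a piecewise smooth curve $\gamma\colon [0,1] \to P$ from $p$ to $q$, I would partition $[0,1]$ into finitely many subintervals $[t_{i-1},t_i]$ so that each restriction $\gamma|_{[t_{i-1},t_i]}$ stays in the image of a small simply-connected open subset of $D$ on which $\pi$ is a diffeomorphism onto its image; this is possible because $P$ is locally Euclidean away from the two cone points, and near each cone point the identifications still admit local charts from $D$ (one may first perturb $\gamma$ to avoid the singularities, since the singular set has measure zero and the estimate we want is closed under uniform limits). Lifting piece by piece to curves $\tilde{\gamma}_i$ in $D$, we get
\begin{equation*}
 |h_P(\gamma(t_i)) - h_P(\gamma(t_{i-1}))| = |h_D(\tilde{\gamma}_i(t_i)) - h_D(\tilde{\gamma}_i(t_{i-1}))| \leq |\tilde{\gamma}_i|_{g_D} = |\gamma|_{g_P^*,[t_{i-1},t_i]},
\end{equation*}
where compatibility of the endpoints across the $\sim$-identifications is exactly the consistency of $h_D$ checked above. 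Summing over $i$ and using the triangle inequality yields
\begin{equation*}
 |h_P(p) - h_P(q)| \leq |\gamma|_{g_P^*} \leq |\gamma|_{g_P},
\end{equation*}
and taking the infimum over all such $\gamma$ gives $|h_P(p)-h_P(q)| \leq d_P(p,q)$, as desired.
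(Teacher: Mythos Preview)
Your proof is correct and follows essentially the same route as the paper: both use $g_P \geq g_P^* = \rho(y)^2 dx^2 + dy^2 \geq dy^2$ to bound the length of any curve below by the variation of the $y$-coordinate. The paper streamlines your local-lifting step by observing directly that for $\gamma(t)=\pi(x(t),y(t))$ the function $y(t)=h_P(\gamma(t))$ is globally well-defined and piecewise smooth (precisely because all $\sim$-identifications preserve $y$), so no partition or perturbation near the cone points is needed.
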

\begin{proof}
Take $(x_0,y_0),(x_1,y_1) \in D$ such that
 $\pi(x_0,y_0)=p$ and $\pi(x_1,y_1)=q$.
Let $\gamma:[0,1] \ra P$
 be the piecewise smooth curve connecting $p$ and $q$.
Put $\gamma(t)=\pi(x(t),y(t))$. 
Then, $y(t)$ is uniquely determined and piecewise smooth with respect to $t$.
Since $g_P \geq g_P^*=\rho(y)^2dx^2+dy^2$, we have
\begin{equation*}
 |\gamma|_{g_P} \geq |\gamma|_{g_P^*} \geq
 \int_0^1 |y'(t)|dt \geq |y(1)-y(0)|= |y_1-y_0|=|h_P(p)-h_P(q)|.
\end{equation*}
Therefore, $d_P(p,q) \geq |h_P(p)-h_P(q)|$.
\end{proof}

We put
\begin{align*}
R_P & = 
 \pi\left(\left[-\frac{11}{8},-\frac{5}{8}\right]\times \RR \right)
 \cup 
  \pi\left(\left[\frac{5}{8},\frac{11}{8}\right]\times \RR \right)
\end{align*}
(recall Figure \ref{fig:pants}).
Remark that $g_P=g_P^*$ on $R_P$ 
 since $R_P$ is disjoint from $Z(\frac{1}{9})$.
This implies that for any $(x,y) \in D$ 
 with $\pi(x,y) \in R_P$ and any $y_0<y$,
 the length of the vertical curve connecting $\pi(x,y)$ and $\pi(x,y_0)$
 with respect to $g_P$ is equal to
 $y-y_0=h_P(\pi(x,y))-h_P(\pi(x,y_0))$.
By Lemma \ref{lemma:length P},
 this vertical curve
 is the shortest curve connecting $\pi(x,y)$ and $\pi(x,y_0)$,
 and hence,
\begin{equation*}
 d_P(\pi(x,y),\pi(x,y_0))=h_P(\pi(x,y))-h_P(\pi(x,y_0)).
\end{equation*}

\subsection{A non-compact surface}\label{s:5.2}
In this subsection,
 we construct a non-compact surface $\Sigma$
 with good metrical properties by gluing copies of
 a compact part of the pair of pants $P$
 described in the previous subsection.

For $a>1$, we put 
\[
D^a=\{(x,y) \in D : |y| \leq a\}, \quad P^a=\pi(D^a).
\]
The space $P^a$ is a compact pair of pants
 with three boundary components
\begin{align}\label{eq:0819d}
\begin{aligned}
 \udel_0 P^a&:=\pi\left(\left[-\frac{3}{2},-\frac{1}{2}\right] \times \{a\}\right), \quad
 \udel_1 P^a:=\pi\left(\left[\frac{1}{2},\frac{3}{2}\right] \times \{a\}\right), \\
\ldel P^a&:= \pi\left([-2,2] \times \{-a\}\right).
\end{aligned}
\end{align}

Fix $L>5(2+\Delta)$ and put
\begin{align*}
 U^{+,0} & =
  \pi\left(\left[-\frac{3}{2},-\frac{1}{2}\right] \times [1,2L-1]\right), &
 U^{+,1} & =
  \pi\left(\left[\frac{1}{2},\frac{3}{2}\right] \times [1,2L-1]\right), \\
 U^- & = \pi([-2,2] \times [-2L+1,-1]).
\end{align*}
See Figure \ref{fig:glue} and notice that
 $P^{2L-1}$ is the disjoint union of $\Int P^1$, $U^{+,0}$, $U^{+,1}$, and $U^-$.
We define diffeomorphisms 
 $\psi^{+,0}:U^{+,0} \ra U^-$ and $\psi^{+,1}:U^{+,1} \ra U^-$ by
\begin{align*}
 \psi^{+,0}(\pi(x,y)) & = \pi(4(x+1),y-2L), &
 \psi^{+,1}(\pi(x,y)) & = \pi(4(x-1),y-2L)    
\end{align*}
 for $\pi(x,y) \in U^{+,0}$ or $U^{+,1}$.
Remark that $\psi^{+,\sigma}$ preserves the metric $g_P$.

Put
\begin{align}\label{eq:0820b}
 \Lambda & =\{(k,\ell) : k\in \ZZ \setminus \{0\},
 \ell=0,1,\dots,2^{|k|-1}-1\}.
\end{align}
Let $\{P_{k,\ell}\}_{(k,\ell) \in \Lambda}$
 be the family of copies of $P$ indexed by $\Lambda$
 and $\vphi_{k,\ell}:P \ra P_{k,\ell}$ the identification map
 for each $(k,\ell) \in \Lambda$.
Define $\pi_{k,\ell}:D \ra P_{k,\ell}$ by
 $\pi_{k,\ell}=\vphi_{k,\ell} \circ \pi$ and put
\begin{align*}
 P^a_{k,\ell} & =\vphi_{k,\ell}(P^a), &
 U^{+,\sigma}_{k,\ell} & =\vphi_{k,\ell}(U^{+,\sigma}), &
 U^-_{k,\ell} & =\vphi_{k,\ell}(U^-). &
\end{align*}
For $(k,\ell)\in \Lambda$, $\sigma \in \{0,1\}$,
 we define a diffeomorphism
 $\psi_{\sigma,(k,\ell)}: U^{+,\sigma}_{k,\ell} \ra U^-_{k',\ell '}$ by
\begin{equation*}
 \psi_{\sigma,(k,\ell)}=\vphi_{k',\ell'}
 \circ \psi^{+,\sigma}
 \circ \vphi_{k,\ell}^{-1},
\end{equation*}
 where 
$ (k',\ell') \in \Lambda$
 with $k'=k +\sgn(k)$ ($\sgn(k)=1$ if $k \geq 1$ and $\sgn(k)=-1$ if $k \leq -1$) and $\ell'=2\ell+\sigma$.
We also define $\psi_0:U^-_{1,0} \ra U^-_{-1,0}$ by
\begin{equation*}
 \psi_0(\pi_{1,0}(x,y))=\pi_{-1,0}(-x,-y-2L).
\end{equation*}
We glue $P^{2L-1}_{k,\ell}$ with $P^{2L-1}_{k',\ell'}$
 by $\psi_{\sigma,(k,\ell)}$
 for all $(k,\ell),(k',\ell') \in \Sigma$
 and $\sigma=0,1$ with $k'=k+\sgn(k)$ and $\ell'=2\ell+\sigma$,
 and glue $P^{2L-1}_{1,0}$ and $P^{2L-1}_{-1,0}$ by $\psi_0$.
Then, we obtain a non-compact surface $\Sigma$ without boundary.
See Figure \ref{fig:glue}.
\begin{figure}[ht]
\begin{center}
\includegraphics[scale=0.7]{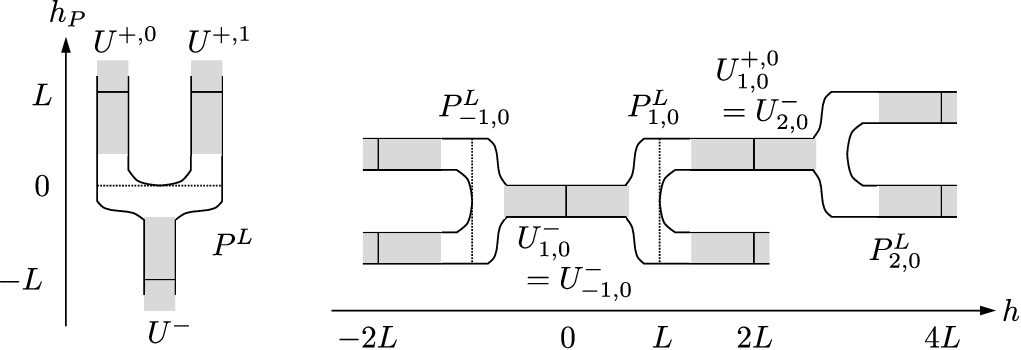}
\end{center}
\caption{The pair of pants $P^L$ and the non-compact surface $\Sigma$}
\label{fig:glue}
\end{figure}
The surface $\Sigma$ can be regarded as
 the union of compact pairs of pants $P^L_{k,\ell}$,
 where the upper boundary $\udel_\sigma P^L_{k,\ell}$ of $P^L_{k,\ell}$
 is glued to
 the lower boundary $\ldel_\sigma P^L_{k',\ell'}$ of $P^L_{k',\ell'}$
 by the gluing map $\psi_{\sigma,(k,\ell)}$
 for all $(k,\ell),(k',\ell') \in \Sigma$ and $\sigma=0,1$  
 with $k'=k+\sgn(k)$ and $\ell'=2\ell+\sigma$,
 and the lower boundaries $\ldel P^L_{1,0}$ and $\ldel P^L_{-1,0}$
 of $P^L_{1,0}$ and $P^L_{-1,0}$ are glued by the gluing map $\psi_0$.
The smooth metric $(\vphi_{k,\ell})_*g_P$
 is preserved by $\psi_{\sigma,(k,\ell)}$
 since $\psi^{+,\sigma}$ preserves the metric $g_P$
 on $P^{2L-1}$.
Hence, the family $((\vphi_{k,\ell})_*g_P)_{(k,\ell) \in \Lambda}$
 of the metric induces a smooth metric $g_\Sigma$ on $\Sigma$.
Let $d_\Sigma$ be the distance on $\Sigma$ induced by the metric $g_\Sigma$.

Put $\Lambda_+=\{(k,\ell) \in \Lambda \mid k \geq 1\}$,
 $\Lambda_-=\{(k,\ell) \in \Lambda \mid k \leq -1\}$,
 and
\begin{align*}
 \Sigma_+ & =\bigcup_{(k,\ell) \in \Lambda_+} P^L_{k,\ell}, &
 \Sigma_- & =\bigcup_{(k,\ell) \in \Lambda_-} P^L_{k,\ell}.
\end{align*}
Note that $ \Sigma_+  \cap  \Sigma_- = \{ \pi _{1,0}(x,-L)  : x\in [-2,2]\} =\{ \pi _{-1,0}(x,L) : x\in [-2,2]\}$. 
We define an involution $I_\Sigma:\Sigma \ra \Sigma$ by
$I_\Sigma(\vphi_{k,\ell}(p))=\vphi_{-k,\ell}(p)$
 for any $(k,\ell) \in \Lambda$ and any $p \in P^L$.
This is an isometry with respect to the metric $g_\Sigma$
 such that $I_\Sigma(\Sigma_+) =\Sigma_-$ and  $I_\Sigma(\Sigma_-) =\Sigma_+$.
We also define $h: \bigcup _{(k,\ell )\in \Lambda } P^{2L-1}_{k,\ell} \to \mathbb R$ by
\begin{equation*}
 h(p)=
\begin{cases}
 (2k-1)L+h_P(\vphi_{k,\ell}^{-1}(p)) & (k \geq 1),\\
 -h(I_\Sigma(p)) & (k \leq -1)
\end{cases}
\end{equation*}
 for $p \in P^{2L-1}_{k,\ell}$. 
 Recall \eqref{eq:0819b} for $h_P$, so that we have
\begin{equation*}
 h(\pi_{k,\ell}(x,y)) =
\begin{cases}
  (2k-1)L+y & (k \geq 1)\\
  (2k+1)L-y & (k \leq -1)
\end{cases}
\end{equation*}
 for any $(k,\ell) \in \Lambda$ and $(x,y) \in D^{2L-1}$.
In particular, 
the map $h: \Sigma \to \mathbb R$ is continuous and satisfies
\begin{align*}
 h(P^L_{k,\ell}) & =
\begin{cases}
 [(2k-2)L,2kL]  & (k \geq 1), \\
 [2kL,(2k+2)L]  & (k \leq -1).
\end{cases} 
\end{align*}
That is, the value of $h(p)$ on $\Sigma$ plays the role of the ``height'' of $p$ (see Figure \ref{fig:glue}).
It holds that
\begin{align}\label{eq:0819c}
 \Sigma_+ =\{p \in \Sigma : h(p) \ge 0\} , \quad \Sigma_- =\{p \in \Sigma : h(p) \le 0\}. 
\end{align}
Note also that 
\begin{align*}
 h(U^-_{k,\ell}) & =
\begin{cases}
 [(2k-3)L+1,(2k-1)L-1]  & (k \geq 1), \\
 [(2k+1)L+1,(2k+3)L-1]  & (k \leq -1).
\end{cases}
\end{align*}

The following is an analog of Lemma \ref{lemma:length P}.
\begin{lemma}
\label{lemma:length Sigma} 
 $d_\Sigma(p,q) \geq |h(p)-h(q)|$ for any $p, q \in \Sigma$.
\end{lemma}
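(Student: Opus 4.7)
The plan is to reduce the inequality to a finite sum of local bounds, each obtained from Lemma \ref{lemma:length P} applied on a single building block $P^L_{k,\ell}$. Fix $p,q\in\Sigma$ and let $\gamma:[0,1]\to\Sigma$ be an arbitrary piecewise smooth curve from $p$ to $q$; once $|\gamma|_{g_\Sigma}\geq|h(p)-h(q)|$ is established, taking the infimum over $\gamma$ yields the claim.

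First I would exploit compactness: since $\gamma([0,1])$ is compact and $\Sigma=\bigcup_{(k,\ell)\in\Lambda}P^L_{k,\ell}$, a standard Lebesgue-number argument produces a partition $0=t_0<t_1<\cdots<t_N=1$ together with indices $(k_i,\ell_i)\in\Lambda$ such that each segment $\gamma_i:=\gamma|_{[t_{i-1},t_i]}$ lies in a single piece $P^L_{k_i,\ell_i}$. Next, because the metric $g_\Sigma$ restricted to $P^L_{k_i,\ell_i}$ is by construction $(\vphi_{k_i,\ell_i})_*g_P$, the map $\vphi_{k_i,\ell_i}$ is a Riemannian isometry, and Lemma \ref{lemma:length P} applied to the pullback $\vphi_{k_i,\ell_i}^{-1}\circ\gamma_i$ in $(P^L,g_P)$ gives
\[
|\gamma_i|_{g_\Sigma}\;\geq\;\bigl|h_P(\vphi_{k_i,\ell_i}^{-1}(\gamma(t_i)))-h_P(\vphi_{k_i,\ell_i}^{-1}(\gamma(t_{i-1})))\bigr|.
\]
Using the explicit formulas $h(\pi_{k,\ell}(x,y))=(2k-1)L+y$ for $k\geq 1$ and $h(\pi_{k,\ell}(x,y))=(2k+1)L-y$ for $k\leq -1$, one checks that $h|_{P^L_{k,\ell}}$ equals $\pm h_P\circ\vphi_{k,\ell}^{-1}$ plus an additive constant, so the right-hand side above is exactly $|h(\gamma(t_i))-h(\gamma(t_{i-1}))|$ in both sign cases.

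Summing and applying the triangle inequality then yields
\[
|\gamma|_{g_\Sigma}=\sum_{i=1}^N|\gamma_i|_{g_\Sigma}\;\geq\;\sum_{i=1}^N|h(\gamma(t_i))-h(\gamma(t_{i-1}))|\;\geq\;|h(q)-h(p)|,
\]
and infimizing over $\gamma$ concludes the argument. The only point that demands a direct check is that $h$ is genuinely well-defined and continuous on all of $\Sigma$, i.e.~that its values match under each gluing $\psi_{\sigma,(k,\ell)}$ and under $\psi_0$; for instance, on $\udel_\sigma P^L_{k,\ell}$ with $k\geq 1$ one has $h=2kL$, which must agree with $h=(2(k+1)-1)L-L=2kL$ on the corresponding $\ldel P^L_{k+1,2\ell+\sigma}$, and similarly $h=0$ on both sides of $\psi_0$. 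This compatibility is the sole technical content beyond Lemma \ref{lemma:length P}; everything else is routine bookkeeping.
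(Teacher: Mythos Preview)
Your argument is essentially the paper's own proof: partition the curve, apply Lemma~\ref{lemma:length P} on each piece via the isometry $\vphi_{k,\ell}$, and sum. The one technical point to fix is the covering: the closed pieces $P^L_{k,\ell}$ do not have interiors that cover $\Sigma$, so a Lebesgue-number argument is not available for them as stated; the paper instead uses the genuine open cover $\{\Int P^{2L-1}_{k,\ell}:(k,\ell)\in\Lambda\}$ (the overlapping extended pieces), after which the partition and the rest of your argument go through verbatim.
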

\begin{proof}
Take a piecewise smooth curve $\gamma:[0,1] \ra \Sigma$ such that
 $\gamma(0)=p$ and $\gamma(1)=q$.
Since $\{\Int P^{2L-1}_{k,\ell} : (k,\ell) \in \Lambda\}$
 is an open cover of $\Sigma$,
 there exist sequences $0=t_0<t_1<\dots<t_m=1$
 and $(k_0,\ell_0),\dots,(k_{m-1},\ell_{m-1}) \in \Lambda$
 such that $\gamma([t_j,t_{j+1}]) \subset \Int P^{2L-1}_{k_j,\ell_j}$.
By Lemma \ref{lemma:length P}, we have
 $|(\gamma|_{[t_j,t_{j+1}]})|_g \geq |h(\gamma(t_j))-h(\gamma(t_{j+1}))|$.
Hence,
\begin{equation*}
 |\gamma|_g=\sum_{j=0}^{k-1}|(\gamma|_{[t_j,t_{j+1}]})|_g
 \geq |h(\gamma(1))-h(\gamma(0))|=|h(p)-h(q)|.
\qedhere
\end{equation*}
\end{proof}
We put
$R=\bigcup_{(k,\ell) \in \Lambda}\vphi_{k,\ell}(R_P \cap P^L)$.
\begin{lemma}
\label{lemma:geodesic Sigma}
For any $p \in R \cap \Sigma_+$ and any $\eta_0 \in [-L+1,L-1]$
 with $h(p) \geq \eta_0$, 
 there exists $p_0 \in R \cap h^{-1}(\eta_0)$ such that
 $d(p,p_0)=h(p)-h(p_0)=h(p)-\eta_0$.
\end{lemma}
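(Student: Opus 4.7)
The plan is to construct an explicit piecewise smooth curve $\gamma$ in $R$ descending from $p$ to some point $p_0$ at height $\eta_0$, with total $g_\Sigma$-length equal to $h(p)-\eta_0$. Once such a $\gamma$ is built, Lemma~\ref{lemma:length Sigma} gives $d_\Sigma(p,p_0) \geq h(p)-h(p_0) = h(p)-\eta_0$, so $\gamma$ is length-minimizing and the desired equalities follow. The curve will be a concatenation of vertical segments of the form $t \mapsto \pi_{k_j,\ell_j}(x_j,t)$, one per pants, glued across the identifications $\psi_{\sigma,(k,\ell)}$ and (if needed) $\psi_0$. On the strips $R_P$ the metric $g_P$ coincides with the flat metric $\rho(y)^2 dx^2+dy^2$, so each such vertical segment lies in $R$ (its horizontal coordinate is constant and in an $R_P$-strip), and its length equals the drop in the local $y$, hence the drop in $h$. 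Summing, the total length will be exactly $h(p)-\eta_0$, \emph{provided} the vertical lines in $R_P$ match up across the gluings.

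This last point is the crux of the argument. Write $p=\pi_{k,\ell}(x_0,y_0)$ with $(k,\ell)\in\Lambda_+$ and $x_0\in[-\tfrac{11}{8},-\tfrac{5}{8}]\cup[\tfrac{5}{8},\tfrac{11}{8}]$, and first descend inside $P^L_{k,\ell}$ along $x=x_0$. On reaching the bottom $\pi_{k,\ell}(x_0,-L)$, decompose $\ell=2\ell''+\sigma$ with $\sigma\in\{0,1\}$ and apply $\psi_{\sigma,(k-1,\ell'')}^{-1}$. From the explicit forms $\psi^{+,0}(x,y)=(4(x+1),y-2L)$ and $\psi^{+,1}(x,y)=(4(x-1),y-2L)$, the point $\pi_{k,\ell}(x_0,-L)$ is identified with $\pi_{k-1,\ell''}(x_0/4\mp 1,L)$. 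A direct arithmetic check shows $x_0/4-1 \in[-\tfrac{43}{32},-\tfrac{21}{32}]\subset[-\tfrac{11}{8},-\tfrac{5}{8}]$ and $x_0/4+1\in[\tfrac{21}{32},\tfrac{43}{32}]\subset[\tfrac{5}{8},\tfrac{11}{8}]$ for any $x_0$ in either $R_P$-strip: the contraction factor $4$ shrinks each strip of width $\tfrac{3}{4}$ strictly inside a target strip after the shift. Thus the new horizontal coordinate again lies in an $R_P$-strip and the descent continues in $R$; iterating pants-by-pants brings us into the pants containing $\eta_0$. This arithmetic is the main technical obstacle and is the reason the specific widths in the definition of $R_P$ are chosen as they are.

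If $\eta_0\geq 0$ we simply halt inside $P^L_{1,0}$ at local $y$-value $\eta_0-L$. If $\eta_0<0$, upon arriving at height $0$ at $\pi_{1,0}(x_\ast,-L)$, the neck identification $\psi_0(\pi_{1,0}(x,-L))=\pi_{-1,0}(-x,-L)$ sends us to $\pi_{-1,0}(-x_\ast,-L) \in P^L_{-1,0}$, and the symmetry $x\mapsto -x$ preserves the $R_P$-strips, so $-x_\ast$ again lies in an $R_P$-strip. One then descends in $P^L_{-1,0}$ with local $y$ increasing from $-L$ to $-L-\eta_0$ (since $h=-L-y$ on the $\Sigma_-$ side); the hypothesis $\eta_0\geq -L+1$ ensures $-L-\eta_0 \leq L-1 < L$, keeping the endpoint strictly inside $P^L_{-1,0}$. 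This yields $p_0 \in R\cap h^{-1}(\eta_0)$, and the total length of $\gamma$ is $h(p)-\eta_0$. Combined with Lemma~\ref{lemma:length Sigma}, the equalities $d_\Sigma(p,p_0)=h(p)-h(p_0)=h(p)-\eta_0$ follow.
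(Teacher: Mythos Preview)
Your argument is correct and follows essentially the same route as the paper's proof: descend from $p$ along vertical segments in the $R_P$-strips, use the contraction $x\mapsto x/4+(2\sigma-1)$ across each gluing $\psi_{\sigma,(k-1,\ell'')}$ to stay inside the strips, and invoke Lemma~\ref{lemma:length Sigma} for the matching lower bound. The paper organizes this as an induction on $k$ rather than an explicit iteration, but the content is identical, including the key arithmetic check that $x/4\pm 1$ lands back in $[-\tfrac{11}{8},-\tfrac{5}{8}]\cup[\tfrac{5}{8},\tfrac{11}{8}]$. One minor remark: your bound ``$-L-\eta_0\le L-1$'' in the $\eta_0<0$ case is weaker than what actually holds ($-L-\eta_0\le -1$), but it still suffices to place $p_0$ in $P^L_{-1,0}\cap R$, so the conclusion is unaffected.
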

\begin{proof}
Take $(k,\ell) \in \Lambda$ and $(x,y) \in D^L$
 such that $k \geq 1$ and $p = \pi_{k,\ell}(x,y)$.
We prove the lemma by induction of $k$.
By Lemma \ref{lemma:length Sigma},
 we have $d_\Sigma(p,p') \geq |h(p)-h(p')|$ for any $p' \in \Sigma$.
Hence, it is sufficient to show that
 $d_\Sigma(p,p_0) \leq h(p)-h(p_0)$ for some $p_0 \in R \cap h^{-1}(\eta_0)$.

When $k=1$, put $p_0=\pi_{1,0}(x,\eta_0-L)$.
Then, the vertical segment connecting $p$ and $p_0$ is contained in $R$,
 and hence, its length is $y-(\eta_0-L)=h(p)-h(p_0)$
since $h(p_0)=\eta_0$. Hence, the lemma holds if $k=1$.

Suppose $k \geq 2$ and the lemma holds for $k-1$.
Take $\ell'=0,1,\dots,2^{k-2}-1$
 and $\sigma=0,1$ such that $\ell=2\ell'+\sigma$.
Put $q=\vphi_{k,\ell}(x,-L)$.
Since the vertical curve in $P^L_{k,\ell}$
 connecting $p$ and $q$ is contained in $R$
 and its length is $y-(-L)=h(p)-h(q)$.
Hence, $d_\Sigma(p,q) \leq h(p)-h(q)$.
Since $p$ is contained in $R$,
 we have $x \in [-\frac{11}{8},-\frac{5}{8}] \cup [\frac{5}{8},\frac{11}{8}]$.
In particular, $|x| \in (\frac{1}{2},\frac{3}{2})$, and hence,
 $\frac{x}{4}\pm 1 \in 
 [-\frac{11}{8},-\frac{5}{8}] \cup [\frac{5}{8},\frac{11}{8}]$.
The point $q$ in $P^L_{k,\ell}$ is identified with
 $\pi_{k-1,\ell'}(\frac{x}{4}+(2\sigma-1),L)$
 in $P^L_{k-1,\ell'}$ by the gluing map $\psi_{\sigma,(k-1,\ell')}$.
This implies that $q$ is contained in $R \cap P^L_{k-1,\ell'}$.
See Figure \ref{fig:R}.
\begin{figure}[ht]
\begin{center}
\includegraphics[scale=0.7]{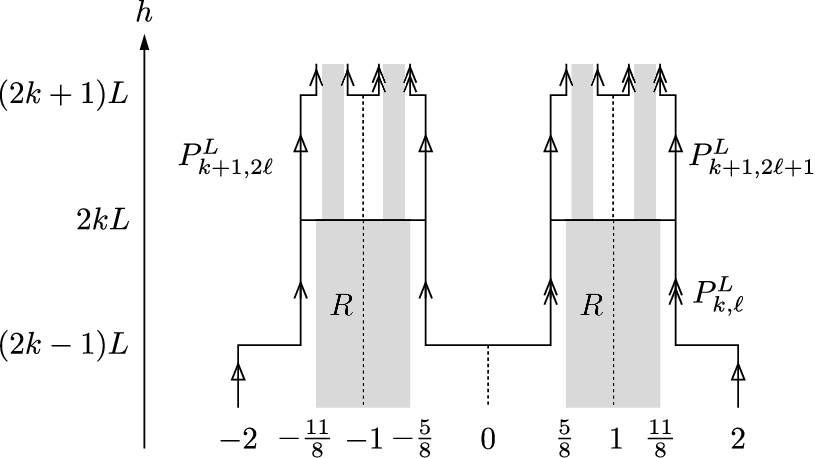}
\end{center}
\caption{Connection between $P_{k+1,2\ell}^L$, $P_{k+1,2\ell+1}^L$ and $P_{k,\ell}^L$}
\label{fig:R}
\end{figure}
By the assumption of induction,
 there exists $p_0 \in R \cap h^{-1}(\eta_0)$
 such that $d_\Sigma(q,p_0) \leq h(q)-h(p_0)$.
Then, we have
\begin{align*}
 d_\Sigma(p,q) \leq d_\Sigma(p,q)+d_\Sigma(q,p_0)\leq h(p)-h(q)+h(q)-h(p_0)=h(p)-h(p_0).
\end{align*}
Therefore, the lemma holds for $k$.
\end{proof}

Recall that the constants $\Delta$ and $L$
 satisfy $g_P \leq (1+\Delta) g_P^*$ and $L>5(2+\Delta)$.
The following is the keystone to control the geometry of a  ball in $\Sigma$.
\begin{prop}
\label{prop:d h}
For $p \in \Sigma_+$ and $p_0 \in U^-_{1,0}$ with $h(p)>h(p_0)$,
 we have
\begin{equation*}
h(p)-h(p_0) \leq d_\Sigma(p,p_0) < h(p)-h(p_0)+ 2+\Delta.
\end{equation*}
\end{prop}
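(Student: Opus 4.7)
The lower bound $d_\Sigma(p,p_0) \ge h(p) - h(p_0)$ is immediate from Lemma \ref{lemma:length Sigma}. For the strict upper bound, my plan is to construct an explicit piecewise-smooth path from $p$ to $p_0$ in three stages: (i) a horizontal move at constant height $h(p)$, inside the pair of pants $P^L_{k,\ell}$ containing $p$, to a nearby point $p_1 \in R$; (ii) a vertical descent from $p_1$ to a point $p_2 \in R \cap h^{-1}(h(p_0))$ supplied by Lemma \ref{lemma:geodesic Sigma}, of $g_\Sigma$-length exactly $h(p)-h(p_0)$; (iii) a horizontal move at constant height $h(p_0)$ from $p_2$ to $p_0$.

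Stage (ii) is a direct application of Lemma \ref{lemma:geodesic Sigma}, legitimate because $h(p_0) \in h(U^-_{1,0}) = [-L+1,L-1]$; an inspection of its inductive proof shows that $p_2$ may be taken of the form $\pi_{1,0}(x_2,\, h(p_0)-L)$ with $x_2 \in R_P$, i.e.\ in the same $\pi_{1,0}$-coordinate slice as $p_0 = \pi_{1,0}(x_0,\, h(p_0)-L)$. Stage (iii) is therefore a horizontal move at $y$-coordinate $y_0 = h(p_0)-L \le -1$, where $\rho \equiv 1/4$ and $\chi \equiv 0$; hence $g_P = g_P^*$ there, the circle at $y_0$ has $g_P^*$-circumference $4\rho(y_0) = 1$, and the stage (iii) length is at most $1/2$. (If $h(p_0) < 0$ so that this slice lies outside $D^L$ in $\pi_{1,0}$-coordinates, the isometric gluing $\psi_0$ identifies it with the corresponding slice in $P^L_{-1,0}$, so the curve is well-defined in $\Sigma$ with the same length.) For stage (i), a direct inspection of the $x$-ranges of $D$ against those of $R_P = \pi([-11/8,-5/8]\times \RR) \cup \pi([5/8,11/8]\times \RR)$ shows that the $x$-coordinate distance from any point of $D$ to $R_P$, measured on the horizontal slice at the relevant height, is at most $5/8$; combined with $\rho \le 1$ and $g_P \le (1+\Delta) g_P^*$, this bounds the stage (i) $g_P$-length by $\sqrt{1+\Delta}\cdot 5/8$.

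Adding the three contributions gives
\[
d_\Sigma(p,p_0) \le \sqrt{1+\Delta}\cdot \tfrac{5}{8} + (h(p)-h(p_0)) + \tfrac{1}{2},
\]
and the elementary inequality $\sqrt{1+\Delta}\cdot \tfrac{5}{8} + \tfrac{1}{2} < 2 + \Delta$ for all $\Delta \ge 0$ (which holds at $\Delta=0$ with $\tfrac{9}{8} < 2$, and whose right-hand side has derivative $1$ while the left-hand side has derivative at most $\tfrac{5}{16}$) yields the strict upper bound. The main technical obstacle is keeping track of where each piece of the path lives after the gluings $\psi^{+,\sigma}$ and $\psi_0$; this is handled by working consistently in the $\pi_{k,\ell}$-coordinate charts and invoking that both gluings preserve $g_P$ in the relevant regions, so that the lengths computed in coordinates coincide with $g_\Sigma$-lengths.
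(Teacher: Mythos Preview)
Your proof is correct and follows essentially the same three-stage strategy as the paper's: a short horizontal move into $R$, the vertical descent supplied by Lemma~\ref{lemma:geodesic Sigma}, and a horizontal move along the level circle $h^{-1}(h(p_0))\cong\RR/\ZZ$ of circumference~$1$. Note only that in the paper's convention $g_1\le g_2$ means $\|\cdot\|_{g_1}\le\|\cdot\|_{g_2}$, so the stage~(i) factor is $1+\Delta$ rather than $\sqrt{1+\Delta}$ (the paper uses the looser horizontal bound $<1$ in place of your $5/8$); your final inequality holds either way.
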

\begin{proof}
By Lemma \ref{lemma:length Sigma},
 we have $h(p)-h(p_0) \leq d_\Sigma(p,p_0)$.
Take $(k,\ell) \in \Lambda$
 and $(x,y) \in D^L$ such that $p=\vphi_{k,\ell}(x,y)$.
There exists
 $x' \in [\frac{1}{8},\frac{7}{8}] \cup [\frac{17}{8},\frac{23}{8}]$
 such that $d_P^*(\pi(x,y),\pi(x',y))<1$, where $d_P^*$ is the distance on $P$ induced by the metric $g_P^*$.
Put $q=\vphi_{k,\ell}(x',y)$.
Then,
\begin{equation*}
 d_\Sigma(p,q) \leq d_P(\pi(x,y),\pi(x',y))
  \leq (1+\Delta) d_P^*(\pi(x,y),\pi(x',y))<1+\Delta. 
\end{equation*}
Since $p_0$ is contained in $U^-_{1,0}$, we have 
$h(p_0) \in [-L+1,L+1]$.
By Lemma \ref{lemma:geodesic Sigma},
 there exists $q_0 \in R$ such that $h(q_0)=h(p_0)$ and $d_\Sigma(q,q_0)=h(q)-h(q_0)$.
Since $|h(p_0)|\leq L-1$, the set $h^{-1}(h(p_0))$ is isometric to $\RR/\ZZ$.
Hence, $d_\Sigma(p_0,q_0)\leq \frac{1}{2}$.
Therefore,
\begin{align*}
 d_\Sigma(p,p_0)  & \leq d_\Sigma(p,q)+d_\Sigma(q,q_0)+d_\Sigma(q_0,p_0)
\leq 1+\Delta+h(q)-h(q_0)+\frac{1}{2}.
\end{align*}
Since $h(p)=h(q)$ and $h(p_0)=h(q_0)$,
 we have $d_\Sigma(p,p_0)<h(p)-h(p_0)+2+\Delta$.
\end{proof}

For any $r>0$ and $p \in \Sigma$,
 let $B(p,r)$ be the closed ball $\{q \in \Sigma : d_\Sigma(p,q) \leq r\}$
 centered at $p$ and of radius $r$.
Put $\Delta_*=2+\Delta$.
Then, $L>5\Delta_*$.
Proposition \ref{prop:d h} and the symmetry by the involution $I_\Sigma$
 imply that 
 for any $p_0 \in h^{-1}([-L+1,L-1])$ and $r>L$,
\[
 \{p \in \Sigma : |h(p)-h(p_0)|\leq r-\Delta_*\}\subset B(p_0,r)
 \subset \{p \in \Sigma : |h(p)-h(p_0)|\leq r\},
\]
in other words,
\begin{equation}
\label{eqn:d h}
h^{-1}\left( B(h(p_0), r-\Delta_* )\right)  \subset B(p_0,r)
 \subset h^{-1}( B(h(p_0), r)).
\end{equation}
\begin{prop}
\label{prop:average 1}
Let $\Phi$ be a continuous function on $\Sigma$ such that
 $\Phi \circ I_\Sigma = -\Phi$,
 $\Phi \geq 0$ on $\Sigma_+$,
 $\Phi=0$ on $h^{-1}([2(k-1)L+1,2kL])$ for any $k\geq 1$, and
 $\int_{P^L_{k,\ell}} \Phi \,d\mathrm{vol}=1$
 for any $(k,\ell) \in \Lambda_+$,
 where $\mathrm{vol}$ is the Riemannian volume given by the metric $g_\Sigma$.
Then,
 for any $p_0 \in h^{-1}([\Delta_*+1,2\Delta_*])$ and any $k \geq 1$,
\[
\frac{1}{\mathrm{vol}(B(p_0,2kL))}\int_{B(p_0,2kL)}\Phi\, d\mathrm{vol}
  \geq \frac{1}{2 \, \mathrm{vol}(P^L)}
\]
and
\[
\int_{B(p_0,2kL-2\Delta_*)}\Phi\, d\mathrm{vol}  = 0.
\]
In particular, the average
 $\frac{1}{\mathrm{vol}(B(p_0,r))}\int_{B(p_0,r)}\Phi\, d\mathrm{vol}$
 of $\Phi$ on $B(p_0,r)$ does not converge as $r$ goes to infinity.
\end{prop}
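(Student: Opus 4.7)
My plan is to exploit the observation that $\Phi$, being anti-symmetric under $I_\Sigma$ and vanishing on every upper half $h^{-1}([2(k'-1)L+1, 2k'L])$ for $k' \geq 1$, is supported (up to a set of measure zero) on the thin horizontal \emph{active slabs} $A_{k'} := h^{-1}([2(k'-1)L, 2(k'-1)L+1])$ on $\Sigma_+$ and their images $I_\Sigma(A_{k'})$ on $\Sigma_-$. Since there are $2^{k'-1}$ pants at level $k'$, the normalization forces $\int_{A_{k'}} \Phi \, d\mathrm{vol} = 2^{k'-1}$, and anti-symmetry yields $\int_{I_\Sigma(A_{k'})} \Phi \, d\mathrm{vol} = -2^{k'-1}$. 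The heart of the proof is then to use \eqref{eqn:d h} to determine which active slabs lie inside $B(p_0, r)$ at the two critical radii.

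Because $p_0 \in h^{-1}([\Delta_*+1, 2\Delta_*])$ and $L > 5\Delta_*$ place $p_0$ inside $U^-_{1,0}$, Proposition~\ref{prop:d h} applies and yields \eqref{eqn:d h}. For $r = 2kL - 2\Delta_*$, the outer $h$-range $[h(p_0)-r,h(p_0)+r]$ is contained in $[\Delta_*+1-2kL+2\Delta_*,\,2kL]$, so $B(p_0, r)$ meets (in positive measure) only the matched pairs $\{A_{k'}, I_\Sigma(A_{k'})\}$ for $k' = 1, \ldots, k$, and anti-symmetry forces $\int_{B(p_0, r)} \Phi \, d\mathrm{vol} = 0$. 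For $r = 2kL$, the extra slab $A_{k+1}$ now lies in the inner region $\{|h-h(p_0)| \leq 2kL-\Delta_*\}$ because its top value $2kL+1$ satisfies $2kL+1 \leq h(p_0)+2kL-\Delta_*$ precisely by $h(p_0) \geq \Delta_*+1$; meanwhile $I_\Sigma(A_{k+1})$, whose top value is $-2kL$, lies strictly below the outer lower bound $h(p_0)-2kL > -2kL$ and is excluded. The single unmatched slab gives $\int_{B(p_0, 2kL)} \Phi \, d\mathrm{vol} = 2^k$.

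The volume estimate uses the cross-sectional length $\ell(h)$ measuring the total length of the level set $h^{-1}(h)$ in $\Sigma$. By construction, on level-$|k'|$ pants $\ell$ equals $2^{|k'|-1}$ on the ``cylinder'' half and $2^{|k'|}$ on the ``two-leg'' half, each of $h$-length $L-1$ (up to bounded bifurcation regions). The outer inclusion in \eqref{eqn:d h} gives
\[
\mathrm{vol}(B(p_0,2kL)) \;\leq\; \int_{h(p_0)-2kL}^{h(p_0)+2kL} \ell(h)\, dh.
\]
The key observation is that both offset intervals $[2kL,\, 2kL+h(p_0)]$ (lower cylinder of level $k+1$) and $[-2kL,\, -2kL+h(p_0)]$ (lower two-leg of level $-k$) carry the common cross-section $\ell \equiv 2^k$ and have identical length $h(p_0)\leq 2\Delta_* < L-1$, so the shift correction cancels exactly, yielding
\[
\int_{h(p_0)-2kL}^{h(p_0)+2kL}\ell\,dh \;=\; \int_{-2kL}^{2kL}\ell\,dh \;=\; 2(2^k-1)\,\mathrm{vol}(P^L) \;<\; 2\cdot 2^k\,\mathrm{vol}(P^L).
\]
Hence the ratio is at least $\tfrac{2^k}{2\cdot 2^k\,\mathrm{vol}(P^L)} = \tfrac{1}{2\,\mathrm{vol}(P^L)}$, and non-convergence of the ball averages follows at once by comparing the subsequences $r_k=2kL-2\Delta_*$ (averages identically $0$) and $r_k=2kL$ (averages bounded below by a positive constant).

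The most delicate step is the cancellation inside the volume integral: one must verify that both boundary intervals $[2kL,\, 2kL+h(p_0)]$ and $[-2kL,\,-2kL+h(p_0)]$ land in strips with $\ell = 2^k$, which relies on $L > 5\Delta_*$ together with $h(p_0)\leq 2\Delta_*$ keeping them well inside the linear cylinder and two-leg regions. The inclusion/exclusion for the single unmatched slab is likewise tight, and the lower bound $h(p_0)\geq \Delta_*+1$ is exactly what is needed to pull $A_{k+1}$ into the inner region of $B(p_0, 2kL)$ while keeping $I_\Sigma(A_{k+1})$ out.
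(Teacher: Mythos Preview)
Your slab-tracking approach is a valid alternative to the paper's, and your volume estimate via the shift-cancellation of the two offset strips $[2kL,2kL+h(p_0)]$ and $[-2kL,-2kL+h(p_0)]$ is in fact sharper than the paper's cruder inclusion $B(p_0,2kL)\subset h^{-1}([-2(k+1)L,2(k+1)L])$. There is, however, a genuine gap in your argument for $\int_{B(p_0,2kL-2\Delta_*)}\Phi\,d\vol=0$. You use only the \emph{outer} inclusion of \eqref{eqn:d h} to exclude $A_{k+1}$ and $I_\Sigma(A_{k+1})$, and then assert that ``anti-symmetry forces'' the integral to vanish. But $B(p_0,r)$ is not $I_\Sigma$-invariant (since $h(p_0)>0$), so $\Phi\circ I_\Sigma=-\Phi$ alone does not kill the integral; for your pairwise cancellation $\int_{A_{k'}}\Phi+\int_{I_\Sigma(A_{k'})}\Phi=0$ to apply you must know that each slab $A_{k'}$ and its mirror $I_\Sigma(A_{k'})$ for $k'\le k$ is \emph{fully} contained in $B(p_0,r)$. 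That does hold, but it requires the \emph{inner} inclusion of \eqref{eqn:d h}, which you invoke for $A_{k+1}$ at radius $2kL$ but omit at radius $2kL-2\Delta_*$.

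The paper sidesteps this by a different route: it sandwiches $B(p_0,2kL-2\Delta_*)$ between $h^{-1}([-(2k-1)L,(2k-1)L])$ and $h^{-1}([-2kL,2kL])$, notes that $\Phi$ vanishes identically on the difference (by the support hypothesis, since the difference sits in the ``dead'' upper halves of the level-$\pm k$ pants), and applies the global identity $\int_{h^{-1}([-r,r])}\Phi\,d\vol=0$ to the symmetric inner region. For the lower bound at radius $2kL$ the paper likewise uses only the sign of $\Phi$ on $\Sigma_\pm$ together with one-sided inclusions $B(p_0,2kL)\cap\Sigma_-\subset h^{-1}([-2kL+1,0])$ and $B(p_0,2kL)\cap\Sigma_+\supset h^{-1}([0,2kL+1])$, obtaining $\geq 2^k$ rather than your exact $=2^k$. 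This trades a little sharpness for robustness: fewer slab-by-slab inclusions need checking. Your coarea-style volume bound, on the other hand, is the more precise of the two and directly yields the stated constant $\frac{1}{2\,\vol(P^L)}$.
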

\begin{proof}
Since $I_\Sigma$ preserves the metric $g$,
 $h \circ I_\Sigma =-h$, and $\Phi \circ I_\Sigma=-\Phi$,
 we have
\begin{align*}
 \int_{h^{-1}([-r,r])} \Phi \,d\vol 
 = \int_{I_\Sigma(h^{-1}[-r,r])}\Phi \circ I_\Sigma \; d\vol
 = -\int_{h^{-1}([-r,r])} \Phi \,d\vol,
\end{align*}
 and hence,
\begin{equation}
\label{eqn:symmetry Phi}
 \int_{h^{-1}([-r,r])} \Phi \,d\vol = 0
\end{equation}
 for any $r >1$.
The assumptions on the support and the integral of $\Phi$ imply that
\begin{align*}
 \int_{h^{-1}([2kL-1,2kL+1])} \Phi \,d\vol
 & =\sum_{\ell=0}^{2^k-1} \int_{P^L_{k+1,\ell}} \Phi \, d\vol = 2^k
\end{align*}
 for any $k \geq 1$.
Equation (\ref{eqn:d h}) implies that
\begin{align*}
B(p_0,2kL) \cap \Sigma_- & \subset h^{-1}([-2kL+1,0]),\\
B(p_0,2kL) \cap \Sigma_+ & \supset h^{-1}([0,2kL+1]).
\end{align*}
Indeed, since $h(p_0)\in [\Delta _*+1, 2\Delta _*]$ and $L>5\Delta_*$, it follows from equations  (\ref{eq:0819c}) and (\ref{eqn:d h}) that
\begin{align*}
B(p_0,2kL) \cap \Sigma_-  &\subset h^{-1}([h(p_0) -2kL ,h(p_0) +2kL]) \cap h^{-1}((-\infty ,0])\\
 &\subset h^{-1}([\Delta _*+1 -2kL ,2\Delta _* +2kL] \cap (-\infty ,0]),
\end{align*}
and
\begin{align*}
B(p_0,2kL) \cap \Sigma_+  &\supset h^{-1}([h(p_0) -2kL+\Delta_* ,h(p_0) +2kL-\Delta_*]) \cap h^{-1}([0,\infty ))\\
 &\supset h^{-1}([3\Delta _* -2kL, 1 +2kL] \cap [0,\infty )) \\
 &\supset h^{-1}([0,1+2kL]).
\end{align*}
Thus, since $\Phi \geq 0$ on $\Sigma_+$ and $\Phi \leq 0$ on $\Sigma_-$,
 we have
\begin{align*}
 \int_{B(p_0,2kL)}\Phi \, d \vol 
 &  \geq \int_{h^{-1}([-2kL+1,2kL+1])} \Phi \,d\vol
=\int_{h^{-1}([2kL-1,2kL+1])} \Phi \, d\vol 
  =2^k,
\end{align*}
 where we use Equation (\ref{eqn:symmetry Phi}) in the first equality.
By Equation (\ref{eqn:d h}) and the fact that $L>5\Delta _*$,
 the disk $B(p_0,2kL)$ is contained in $h^{-1}([-2(k+1)L,2(k+1)L])$.
Hence,
\begin{align*}
 \vol(B(p_0,2kL)) & \leq \vol(h^{-1}([-2(k+1)L,2(k+1)L]))\\
 & =\sum_{(m,\ell) \in \Lambda, |m|\leq k+1}\vol(P^L_{m,\ell})\
  =(2^{k+1}-1)\, \vol(P^L).
\end{align*}
Therefore, 
\begin{align*}
\frac{1}{\vol(B(p_0,2kL))}\int_{B(p_0,2kL)}\Phi\, d\vol
 & \geq \frac{2^k}{2^{k+1}-1} \cdot \frac{1}{\vol(P^L)}
 > \frac{1}{2 \,\vol(P^L)}.
\end{align*}

By Equation (\ref{eqn:d h}), since $h(p_0) \in [\Delta _*+1,2\Delta_*]$, we have
\begin{align*}
B(p_0,2kL-2\Delta_*) & \supset 
 h^{-1}([-2kL+5\Delta_*,2kL-2\Delta_*+1]),\\
B(p_0,2kL-2\Delta_*) 
 & \subset h^{-1}([-2kL+3\Delta_*+1,2kL]).
\end{align*}
Since $\Delta_*=2+\Delta>2$ and $L>5\Delta_*$, this implies that
\begin{align*}
h^{-1}([(-2k+1)L,(2k-1)L]) \subset B(p_0,2kL-2\Delta_*) 
 & \subset h^{-1}([-2kL,2kL]).
\end{align*}
Put 
$A=B(p,2kL-2\Delta_*) \setminus h^{-1}([(-2k+1)L, (2k-1)L])$.
By the assumption on the support of $\Phi$,
 we have $\Phi=0$ on $A$.
By Equation (\ref{eqn:symmetry Phi}), we have
\begin{equation*}
\int_{B(p_0,2kL-2\Delta_*)} \Phi \,d \vol
 =\int_A \Phi \,d \vol =0.
\qedhere 
\end{equation*}
\end{proof}
The above proposition provides a function such that the average on disks does not converge for an open subset.
The following proposition claims the same function admits a point for which the average on disks converges.
\begin{prop}
\label{prop:average 2} 
Let $\Phi$ be a continuous function on $\Sigma$
 such that $\Phi \circ I_\Sigma =-\Phi$.
Then, for any $p_0 \in h^{-1}(0)$ and $r>0$,
\begin{equation*}
 \int_{B(p_0,r)}\Phi \, d\mathrm{vol}=0.
\end{equation*}
\end{prop}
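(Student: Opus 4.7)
The plan is to exploit the antisymmetry $\Phi \circ I_\Sigma = -\Phi$ via a change-of-variables argument based on the fact that $I_\Sigma$ is a volume-preserving isometry of $(\Sigma, g_\Sigma)$. The isometry property is immediate from the construction in Section~\ref{s:5.2}: the metric $g_\Sigma$ is assembled by gluing isometric copies $(\vphi_{k,\ell})_\ast g_P$, and the assignment $\vphi_{k,\ell}(p) \mapsto \vphi_{-k,\ell}(p)$ interchanges two isometric copies while being compatible with the gluing maps $\psi_{\sigma,(k,\ell)}$ (by symmetry $k \leftrightarrow -k$ of the binary tree) and with the central gluing $\psi_0$. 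As a consequence, for every $p_0 \in \Sigma$ and $r>0$ one has $I_\Sigma(B(p_0,r)) = B(I_\Sigma(p_0),r)$ and $I_\Sigma$ preserves the Riemannian volume.

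The crux of the argument is the identification $I_\Sigma(p_0) = p_0$ for $p_0 \in h^{-1}(0)$. Geometrically, $h^{-1}(0) = \Sigma_+ \cap \Sigma_-$ is the common boundary of the two halves of $\Sigma$ that are exchanged by $I_\Sigma$, and on this shared locus the two representations $\vphi_{1,0}(q)$ and $\vphi_{-1,0}(q)$ of a point on $\ldel P^L$ are identified precisely by the central gluing $\psi_0$. Granting this, $B(p_0,r) = B(I_\Sigma(p_0),r)$, so $B(p_0,r)$ is itself $I_\Sigma$-invariant as a subset of $\Sigma$.

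With these two ingredients in hand the proof reduces to one line. Since $I_\Sigma$ is a volume-preserving isometry sending $B(p_0,r)$ to itself, applying the change of variables and then the hypothesis $\Phi \circ I_\Sigma = -\Phi$ gives
\[
\int_{B(p_0,r)} \Phi \, d\mathrm{vol} \;=\; \int_{B(p_0,r)} \Phi \circ I_\Sigma \, d\mathrm{vol} \;=\; -\int_{B(p_0,r)} \Phi \, d\mathrm{vol},
\]
so $\int_{B(p_0,r)} \Phi \, d\mathrm{vol} = 0$, which is the desired conclusion.

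The only substantive obstacle lies in the fixed-point claim used in the second paragraph: tracing through the formula $\psi_0(\pi_{1,0}(x,y)) = \pi_{-1,0}(-x,-y-2L)$ and the definition $I_\Sigma(\vphi_{k,\ell}(p)) = \vphi_{-k,\ell}(p)$ is a subtle compatibility check that requires care, because one must match the two local charts through the gluing relations so that the point $\vphi_{-1,0}(q)$ really represents the same element of $\Sigma$ as $\vphi_{1,0}(q)$ when $q$ lies on $\ldel P^L$. Everything else — isometry, volume preservation, the change of variables — is routine once this identification is secured.
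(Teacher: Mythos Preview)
Your approach is exactly the paper's: use that $I_\Sigma$ is a volume-preserving isometry with $I_\Sigma(B(p_0,r))=B(p_0,r)$, then invoke $\Phi\circ I_\Sigma=-\Phi$. You correctly flag the fixed-point claim $I_\Sigma(p_0)=p_0$ as the crux---but your justification of it is wrong, and the claim in fact fails for most $p_0\in h^{-1}(0)$.

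Trace the formulas. A point of $h^{-1}(0)$ is $p_0=\pi_{1,0}(x,-L)$ for some $x\in[-2,2]$, and by definition $I_\Sigma(p_0)=\pi_{-1,0}(x,-L)$. The central gluing $\psi_0(\pi_{1,0}(x',y'))=\pi_{-1,0}(-x',-y'-2L)$ identifies $\pi_{1,0}(x',-L)$ with $\pi_{-1,0}(-x',-L)$, so in $\Sigma$ one has $\pi_{-1,0}(x,-L)=\pi_{1,0}(-x,-L)$. Hence $I_\Sigma$ restricted to the circle $h^{-1}(0)$ is the reflection $x\mapsto -x$, not the identity; its only fixed points are $x=0$ and $x=\pm 2$. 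For any non-fixed $p_0$ there are continuous $\Phi$ with $\Phi\circ I_\Sigma=-\Phi$ and $\Phi(p_0)\neq 0$ (nothing forces $\Phi$ to vanish there), so $\int_{B(p_0,r)}\Phi\,d\vol\neq 0$ for small $r$. Thus your argument---and the paper's one-line proof, which tacitly uses $I_\Sigma(B(p_0,r))=B(p_0,r)$---is valid only at those two fixed points. That still suffices for the proposition's role in the paper, which is merely to exhibit \emph{some} point where the disk averages converge.
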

\begin{proof}
Since $I_\Sigma$ preserves the metric $g$, we have, for each $r>0$,
\begin{equation*}
 \int_{B(p_0,r)}\Phi \, d\vol
 = \int_{I_\Sigma(B(p_0,r))}\Phi \circ I_\Sigma \, d\vol
 = \int_{B(p_0,r)} (-\Phi) \,d\vol.
\end{equation*}
This means that $\int_{B(p_0,r)}\Phi \, d\vol=0$.
\end{proof}

\subsection{Construction of the foliated compact 3-manifold}
Here we prove Theorem \ref{mainthm:2}.
Let $X$ be a compact surface with genus three. 
Separate $X$ into a pair of $3$-punctured disks $P_+, P_-$ with the canonical projection $\pi : P_+ \cup P_- \to X$.
Then, $P_+$ and $P_-$ have three common boundary components, denoted by $\partial _AP_+$, $\partial _B P_+$, $\partial _C P_+$ for $P_+$ and by $\partial _AP_-$, $\partial _B P_-$, $\partial _C P_-$ for $P_-$ (that is, for each $\rho \in\{ A, B, C\}$ and $x\in \partial _\rho P_+$, there is exactly one $x'\in \partial _\rho P_-$ such that $\pi (x)= \pi (x')$).

Let $I_A$, $I_B$, $I_C$ be pairwise disjoint open intervals of $S^1$.
For $\sigma \in \{+, -\}$ and $\rho \in \{ A, B, C\}$, let $h_{\rho ,\sigma} : P^L\to P_\sigma$ be a diffeomorphism satisfying that
\begin{equation}\label{eq:0820a}
h_{\rho ,\sigma} (\ldel P^L) = \partial _\rho P_\sigma , \quad h_{\rho ,\sigma } (\udel_s P^L) = \partial _{\rho + s+1} P_{\sigma}
\end{equation}
for $s\in \{0,1\}$, where ``$+1$'' on $\{ A, B, C\}$ is interpreted as the left-shift operation, i.e.~
\[
A+1=B, \quad B+1=C, \quad C+1=A
\]
(recall \eqref{eq:0819d} for $\ldel P^L$ and $\udel_s P^L$).

 \begin{figure}[h]
\begin{center}
\includegraphics[scale=0.75]{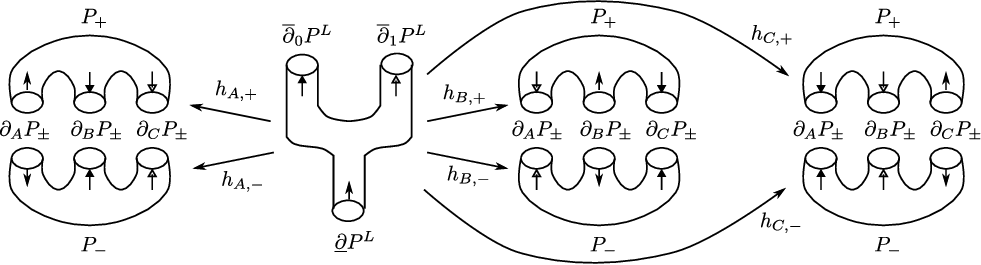}
\end{center}
\caption{The gluing maps $h_{\rho,\sigma}$}
\label{fig:h}
\end{figure}

We further introduce three diffeomorphisms $f_A, f_B, f_C: S^1 \to S^1$ such that 
\begin{itemize}
\item $I_B \cup I_C \subset f_A(I_A)$, $f_A(I_B) \cup f_A(I_C) \subset I_A$,
\item $I_C \cup I_A \subset f_B(I_B)$, $f_B(I_C) \cup f_B(I_A) \subset I_B$,
\item $I_A \cup I_B \subset f_C(I_C)$, $f_C(I_A) \cup f_C(I_B) \subset I_C$,
\end{itemize}
and that there exists an open interval $J\subset I_A$ satisfying that $f_A(J) \subset I_A.$
Notice that  
\[
I_B \cup I_C \subset f_A^{-1}(I_A), \quad f_A^{-1}(I_B) \cup f_A^{-1}(I_C) \subset I_A
\]
 with a similar property for $f_B, f_C$,  and 
 \[
 f_A^{-1}(J) \subset I_A.
 \]

  \begin{figure}[h]
\begin{center}
\includegraphics[scale=1]{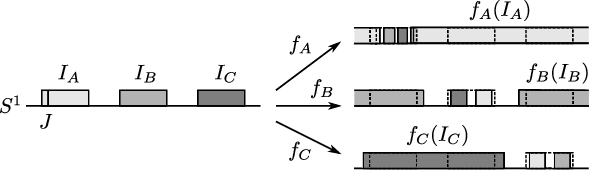}
\end{center}
\caption{The images of $I_\rho$ by $f_\rho$}
\label{fig:f}
\end{figure}

 Let 
 $\{g_x'\}_{x\in P_+ \cup P_-}$ be a smooth family of $\mathcal C^\infty$ metrics on $S^1$
  such that 
  \[
 g_x '= (f_\rho )_*g_{x'}' \quad \text{if $x\in \partial _\rho P_+$, $x'\in \partial _\rho P_-$ and $\pi (x)=\pi (x')$}
 \] 
 for each $\rho \in \{ A, B, C\}$.
Let $g$ be a $\mathcal C^\infty$ metric  on $(P_+ \cup P_-) \times S^1$ satisfying
\[
g_{(x,y)}((v,w),(v',w'))= 
((h_{\rho ,\sigma} )_* g_P)(v,v') + g'_x(w,w') 
\quad \text{if $(x,y) \in P_\sigma \times 
 I_\rho$}
\]
for $v, v'\in T_{x}(P_+ \cup P_-)$ and $w, w'\in T_{y}S^1$ (one can explicitly construct such a metric $g$ from $g_P$ by using bump functions).

Finally, we glue the boundary of $(P_+ \cup P_-)\times S^1$ by the equivalence relation $\sim$ given by
\[
(x,y) \sim (x', f_\rho^{-1}(y)) \quad \text{with $x\in \partial _\rho P_+$, $x'\in  \partial _\rho P_-$, $\pi(x)=\pi (x')$, $\rho \in\{A,B,C\}$},
\]
and denote by $M$ the resulting 3-manifold.
As a usual abuse of notation, we identify $(x,y)\in (P_+ \cup P_-) \times S^1$ with its equivalent class in $M$. 
Observe that this gluing preserves 
 the metric $g$, so we again denote by $g$ the induced metric on $M$.
Let $\mathcal F$ be the foliation of $M$ consisting of leaves of the form $(\bigcup _{g\in G} (P_+ \cup P_-) \times \{ g(y)\})/\sim $ with $y\in S^1$, where $G$ is the group generated by $G_1:=\{f_A^\pm , f_B^\pm , f_C^\pm\}$.

  \begin{figure}[h]
\begin{center}
\includegraphics[scale=1]{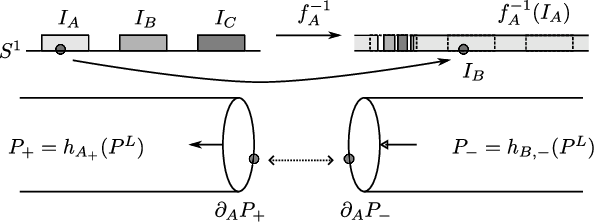}
\end{center}
\caption{Gluing of $\partial _AP_+ \times S^1$ and $\partial _AP_- \times S^1$}
\label{fig:glue2}
\end{figure}

\subsection{Completion of the proof}\label{ss:6.4}

We first see that for each leaf $L\in \mathcal F$ through an open set, the non-compact Riemannian manifold $(L,g\vert _L)$, where $g\vert _L$ is the restriction of $g$ on $L$, is isometrically diffeomorphic to $(\Sigma , g_\Sigma )$ of Section \ref{s:5.2}. 
Fix a point $(x_0, y_0)$ from an open set
\begin{equation}\label{eq:1119b}
h_{A,+} \circ h^{-1}((\Delta _*+1,2\Delta _*))\times J.
\end{equation}
According to \eqref{eq:0820a},
define $\{ f_{k,\ell} \}_{(k,\ell )\in \Lambda }$ as follows (recall \eqref{eq:0820b} for $\Lambda$):
\begin{itemize}
\item  Set $f_{1,0}=\mathrm{id}_{S^1}$ and $f_{-1,0}=f_A^{-1}$;
\item Given $(k,\ell)\in \Lambda$ and $s\in \{0,1\}$, if $f_{k,\ell}(J) \subset I_{\rho }$
 with $\rho \in \{ A, B, C\}$, then set
\[
f_{k+\mathrm{sgn}(k), 2\ell +s } = f_{\rho +s +1}^{\mathrm{sgn}(k)} \circ f_{k,\ell},
\]
where 
\[
\mathrm{sgn} (k)=
\begin{cases} +1 \quad & (k\in \{2k' -1: k'\in \mathbb N\} \cup \{ -2k' : k' \in \mathbb N\})\\
-1  \quad &  (k\in \{2k' : k'\in \mathbb N\} \cup \{ -2k' +1: k' \in \mathbb N\})
\end{cases}
\]
(note that $f_{k+\mathrm{sgn}(k), 2\ell +s }(J) \subset I_{\rho + s +1}$).
\end{itemize}
Moreover, define $\sigma(k)$ as $\sigma(k)=+$ if $\mathrm{sgn} (k) =+1$ and $\sigma(k)=-$ if $\mathrm{sgn} (k) =-1$. 
For example, if $(k,\ell)=(1,0)$, then $\sigma (k)=+$ and $\rho =A$, $\rho +1=B$, $\rho +2=C$, so that $f_{2,0} =f_{B}\circ f_{1,0}$ and  $f_{2,1} =f_{C}\circ f_{1,0}$. 
If $(k,\ell)=(-1,0)$, then $\sigma (k)=-$ and $\rho =A$, $\rho +1=B$, $\rho +2=C$, so that $f_{-2,0} =f_{B}^{-1}\circ f_{-1,0}$ and  $f_{-2,1} =f_{C}^{-1}\circ f_{-1,0}$.
By the above notation, we can write the leaf $B^{\mathcal F}(x_0,y_0)$ through $(x_0,y_0)$ as
\[
\left(\bigcup _{(k,\ell)\in \Lambda } P_{\sigma (k)} \times \{ f_{k,\ell}(y_0)\} \right)/\sim ,
\]
which is, by construction, diffeomorphic to $\Sigma = \bigcup _{(k,\ell )\in \Lambda } P^{L}_{k,\ell} $ by  the diffeomorphisms $\{ \varphi _{k,\ell } \circ h_{\rho (k,\ell ), \sigma (k)}^{-1}\} _{(k,\ell )\in \Lambda }$ with $\rho (k,\ell)$ determined by $f_{k,\ell}(J) \subset I_{\rho (k,\ell)}$, and moreover, the pushforward of $g\vert _L$ by the diffeomorphisms is $g_\Sigma$.

Now we apply Proposition \ref{prop:average 1} to get a continuous function $\Phi :\Sigma \to \mathbb R$ for which the average  $\lim_{r\to\infty}\frac{1}{\mathrm{vol}(B(p_0,r))}\int_{B(p_0,r)}\Phi\, d\mathrm{vol}$ does not exist for $p_0=\varphi _{1,0} \circ h_{A, +}^{-1}(x_0)$.
According to the designed requirement on $\Phi$ in Proposition \ref{prop:average 1}, we can assume that $\Phi \circ \varphi_{k,\ell} =\Phi \circ \varphi _{1,0}$ on $P^L$ for any $(k,\ell)\in \Lambda $.
Note also that $\{ f_{k,\ell}(J)\}_{(k,\ell)\in \Lambda}$ are mutually disjoint open sets included in $I_A \cup I_B \cup I_C$.
Hence, we can define a continuous function $\Psi$ on 
$
\left(\bigcup _{(k,\ell)\in \Lambda } P_{\sigma (k)} \times \{ f_{k,\ell}(J)\} \right)/\sim 
$
 by
\begin{equation}\label{eq:1119c}
\Psi (x,y) = \Phi \circ  \varphi _{k,\ell }\circ h_{\rho (k,\ell),\sigma (k)}^{-1}(x) \quad \text{if $(x,y) \in P_{\sigma (k)} \times \{ f_{k,\ell}(J)\}$}.
\end{equation}
Indeed, this is consistent with the equivalence relation $\sim$. 
If $(x,y) \in \partial _{\rho+s+1}P_{\sigma} \times \{ f_{k,\ell}(J)\}$ for some $s\in \{0,1\}$ with $\rho=\rho(k,\ell)$ and $\sigma =\sigma (k)$,
 then
\[
(x,y) \sim (x',f_{\rho +s+1}^{\mathrm{sgn}(\sigma)}(y))
\]
with $x' \in  \partial _{\rho +s+1}P_{-\sigma}$ satisfying $h_{\rho +s+1,\sigma }^{-1}(x) = h_{\rho +s+1,-\sigma}^{-1}(x')$, where $\mathrm{sgn} (\pm )=\pm 1$.
On the other hand, since $f_{\rho +s +1}^{\mathrm{sgn}(\sigma)} \circ f_{k,\ell} =f_{k+\mathrm{sgn}(k), 2\ell +s}$,
\begin{align*}
\Psi (x',f_{\rho +s+1}^{\mathrm{sgn}(\sigma)}(y))  &  =\Phi \circ \varphi _{k+\mathrm{sgn}(k),2\ell +s}\circ h_{\rho  +s+1,-\sigma}^{-1}(x')\\
& =\Phi \circ \varphi _{k,\ell}\circ h_{\rho +s+1,\sigma}^{-1}(x) 
=\Psi (x,y).
\end{align*}
Here we used the property $\Phi \circ \varphi_{k,\ell} =\Phi \circ \varphi _{1,0}$ for any $(k,\ell)\in \Lambda $.
Finally, we extend $\Psi$ to a continuous function on $M$, which is possible because $\{ f_{k,\ell}(J)\}_{(k,\ell)\in \Lambda}$ are mutually disjoint open sets.

By 
 the above observation that $B^{\mathcal F}(x_0,y_0)=\left(\bigcup _{(k,\ell)\in \Lambda } P_{\sigma (k)} \times \{ f_{k,\ell}(y_0)\} \right)/\sim$ is isometrically diffeomorphic to $\Sigma = \bigcup _{(k,\ell )\in \Lambda } P^{L}_{k,\ell} $ by $\{ \varphi _{k,\ell } \circ h_{\rho (k,\ell ), \sigma (k)}^{-1}\} _{(k,\ell )\in \Lambda }$,  it holds that $\vert B^{\mathcal F}_r(x_0,y_0)\vert  = \mathrm{vol}(B(p_0,r))$ and 
\begin{align*}
 \int _{B^{\mathcal F}_r(x_0,y_0)} \Psi (z) \, dz& = \sum_{(k,\ell)\in \Lambda} \int _{B^{\mathcal F}_r(x_0,y_0) \cap (P_{\sigma (k)} \times \{ f_{k,\ell}(y_0)\})}  \Phi \circ  \varphi _{k,\ell }\circ h_{\rho (k,\ell),\sigma (k)}^{-1}(x) \, dx\\
 &  = \sum_{(k,\ell)\in \Lambda} \int _{B_r(p_0) \cap P_{k,\ell}^L}  \Phi (x) \, d\mathrm{vol}
   =   \int _{B_r(p_0) }  \Phi (x) \, d\mathrm{vol}.
\end{align*}
Therefore, it follows from Proposition \ref{prop:average 1} that the length average of $\Psi$ along the leaf $B^{\mathcal F}_r(x_0,y_0)$ does not exist.
This completes the proof of Theorem \ref{mainthm:2}.

\subsection{Proof of Corollary \ref{cor:y}}
We first show the following general proposition, which would be useful to extend foliations with regular/irregular behavior to higher dimension.
\begin{prop}\label{prop:Tk}
Let $l, l',d,d'$ be integers such that $0<l<d$ and $0\le l' \le d'$.
Let $(T_1,g_1)$, $(T_2,g_2)$ be compact Riemannian manifolds whose dimensions are $d'-l'$ and $l'$, respectively.
Let $\mathcal{F}$ be a codimension $l$ foliation on a $d$-dimensional Riemannian manifold $(M,g)$, and 
consider the codimension $(l+l')$ foliation 
\[
\widetilde{\mathcal{F}} := \left\{ L \times T_1 \times \{ \theta_2\}  \mid L\in \mathcal F, \; \theta_2 \in T_2\right\}
\]
on the $(d+d')$-dimensional Riemannian manifold $\widetilde M:=M \times T_1\times T_2$ equipped with the product Riemannian metric $g\times g_1\times g_2$.
(In the case when $d'=0$, we interpret $\widetilde M$ as $M$.
When $d'> 0$ and in addition $l'=0$ or $d'-l'=0$, we interpret $\widetilde M$ as $M\times T_1$ or $M\times T_2$, respectively.) 
Let $\varphi \colon M \to \R$ be a continuous function, and consider a continuous function $\widetilde{\varphi} \colon \widetilde M  \to \R$ defined by 
\[
\widetilde{\varphi}(x, \theta_1,\theta_2) = \varphi(x) \quad \text{ for $(x, \theta_1,\theta_2) \in M \times T_1\times T_2$}.
\]

Given $x\in M$, suppose that a sequence $\{r_n\}_{n\in\mathbb N}$ with $\lim_{n\to\infty} r_n=\infty$ satisfies 
 \[
B_{r_n}^{\mathcal{F}}(x) \setminus B_{r_n - R_1}^{\mathcal{F}}(x) \subset \varphi^{-1}(0),
 \]
where $R_1$ is the diameter of $T_1$.
  Then, 
 \begin{align*}
 &\limsup_{n\to\infty}\dfrac{1}{\vert B_{r_n}^{\mathcal{F}}(x) \vert} \int_{B_{r_n}^{\mathcal{F}}(x)} \varphi(y) \, dy
 =\limsup_{n\to\infty}\dfrac{1}{\vert B_{r_n}^{\widetilde{\mathcal{F}}}(x,0,0)\vert}\int_{B_{r_n}^{\widetilde{\mathcal{F}}}(x,0,0)} \widetilde{\varphi}(p) \, dp,\\
 & \liminf_{n\to\infty}\dfrac{1}{\vert B_{r_n}^{\mathcal{F}}(x) \vert} \int_{B_{r_n}^{\mathcal{F}}(x)} \varphi(y) \, dy
 =\liminf_{n\to\infty}\dfrac{1}{\vert B_{r_n}^{\widetilde{\mathcal{F}}}(x,0,0)\vert}\int_{B_{r_n}^{\widetilde{\mathcal{F}}}(x,0,0)} \widetilde{\varphi}(p) \, dp.
 \end{align*}
\end{prop}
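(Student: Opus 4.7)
The plan is to reduce the averages on $\widetilde{M}$ to those on $M$ by a Fubini-type computation along the leaf $L\times T_1\times\{0\}$ of $\widetilde{\mathcal F}$ through $(x,0,0)$, where $L=\mathcal F(x)$. Since $\widetilde{M}$ carries the product Riemannian metric, this leaf has intrinsic distance $d_{\widetilde L}((x,0),(y,\theta_1))=\sqrt{d_L(x,y)^{2}+d_{T_1}(0,\theta_1)^{2}}$, so
\[
B_r^{\widetilde{\mathcal F}}(x,0,0)=\{(y,\theta_1,0):d_L(x,y)^{2}+d_{T_1}(0,\theta_1)^{2}\le r^{2}\}.
\]
Setting $s(\theta_1,r):=\sqrt{r^{2}-d_{T_1}(0,\theta_1)^{2}}$ (well defined for $r\ge R_1$), Fubini together with $\widetilde\varphi(y,\theta_1,0)=\varphi(y)$ yield
\[
\int_{B_r^{\widetilde{\mathcal F}}(x,0,0)}\!\!\widetilde\varphi\,dp=\int_{T_1}\!\int_{B_{s(\theta_1,r)}^{\mathcal F}(x)}\!\varphi(y)\,dy\,d\theta_1,\quad \bigl|B_r^{\widetilde{\mathcal F}}(x,0,0)\bigr|=\int_{T_1}\bigl|B_{s(\theta_1,r)}^{\mathcal F}(x)\bigr|\,d\theta_1.
\]

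The first key step is the uniform sandwich $r_n-R_1\le s(\theta_1,r_n)\le r_n$ for every $\theta_1\in T_1$ and $r_n\ge R_1$, a direct consequence of $d_{T_1}(0,\theta_1)\le R_1$. Combined with the hypothesis $B_{r_n}^{\mathcal F}(x)\setminus B_{r_n-R_1}^{\mathcal F}(x)\subset\varphi^{-1}(0)$, this forces the inner integral to be independent of $\theta_1$ and equal to $\int_{B_{r_n}^{\mathcal F}(x)}\varphi\,dy$, because the symmetric difference between the two integration domains is contained in the zero set of $\varphi$. Hence
\[
\int_{B_{r_n}^{\widetilde{\mathcal F}}(x,0,0)}\widetilde\varphi\,dp=|T_1|\int_{B_{r_n}^{\mathcal F}(x)}\varphi(y)\,dy,
\]
so the numerators on both sides of the claimed equalities are proportional. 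For the denominators, the same sandwich immediately gives $|T_1|\,\bigl|B_{r_n-R_1}^{\mathcal F}(x)\bigr|\le\bigl|B_{r_n}^{\widetilde{\mathcal F}}(x,0,0)\bigr|\le|T_1|\,\bigl|B_{r_n}^{\mathcal F}(x)\bigr|$, and I would strengthen this to the asymptotic
\[
\frac{\bigl|B_{r_n}^{\widetilde{\mathcal F}}(x,0,0)\bigr|}{|T_1|\,\bigl|B_{r_n}^{\mathcal F}(x)\bigr|}\longrightarrow 1.
\]
Writing the ratio as $|T_1|^{-1}\int_{T_1}\bigl|B_{s(\theta_1,r_n)}^{\mathcal F}(x)\bigr|/\bigl|B_{r_n}^{\mathcal F}(x)\bigr|\,d\theta_1$ and using $r_n-s(\theta_1,r_n)\le R_1^{2}/r_n\to 0$ uniformly in $\theta_1$, the convergence follows by dominated convergence from the fact that leaves of foliations on compact Riemannian manifolds have bounded geometry, whence $\bigl|B_{r-\epsilon}^{\mathcal F}(x)\bigr|/\bigl|B_r^{\mathcal F}(x)\bigr|\to 1$ as $\epsilon\to 0$ uniformly for $r\ge R_1$.

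The main obstacle is precisely this last volume-ratio step: one must extract from the compactness of $M$ a quantitative control of the form $F(r-\epsilon)/F(r)\ge e^{-C\epsilon}$ for the volume function $F(r)=|B_r^{\mathcal F}(x)|$. This is obtained from a Bishop--Gromov-type comparison on the leaves (whose sectional curvatures are uniformly bounded because $M$ is compact), which yields a uniform upper bound for $|\partial B_r^{\mathcal F}(x)|/|B_r^{\mathcal F}(x)|$ and hence at most exponential volume growth. Once this estimate is secured, the numerator identity together with the denominator asymptotic gives that the averages on the two sides differ by a multiplicative factor $c_n\to 1$. Since $\varphi$ is bounded (so the averages $a_n$ are bounded), one has $|b_n-a_n|\le |c_n-1|\,\|\varphi\|_\infty\to 0$, and the claimed equalities of $\limsup$ and $\liminf$ follow.
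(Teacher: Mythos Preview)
Your numerator computation agrees with the paper's: both establish $\int_{B_{r_n}^{\widetilde{\mathcal F}}}\widetilde\varphi=|T_1|\int_{B_{r_n}^{\mathcal F}}\varphi$ using that $\varphi$ vanishes on the annulus $B_{r_n}^{\mathcal F}\setminus B_{r_n-R_1}^{\mathcal F}$. The difference lies entirely in the treatment of the denominator. The paper argues much more coarsely: from the inclusions
\[
B_{r-R_1}^{\mathcal F}(x)\times T_1\times\{0\}\ \subset\ B_r^{\widetilde{\mathcal F}}(x,0,0)\ \subset\ B_r^{\mathcal F}(x)\times T_1\times\{0\}
\]
(the first because $(r-R_1)^2+R_1^2\le r^2$) it obtains $|T_1|\,|B_{r_n-R_1}^{\mathcal F}|\le|B_{r_n}^{\widetilde{\mathcal F}}|\le|T_1|\,|B_{r_n}^{\mathcal F}|$ and then simply sandwiches the $\widetilde{\mathcal F}$-average between $\int_{B_{r_n}^{\mathcal F}}\varphi\big/|B_{r_n}^{\mathcal F}|$ and $\int_{B_{r_n-R_1}^{\mathcal F}}\varphi\big/|B_{r_n-R_1}^{\mathcal F}|$, declaring the conclusion immediate. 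No Fubini slicing, no volume-ratio asymptotic, no Bishop--Gromov.

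Your route via the exact product distance, the bound $r_n-s(\theta_1,r_n)\le R_1^2/r_n\to 0$, and a comparison argument is genuinely different and does yield the sharper statement $|B_{r_n}^{\widetilde{\mathcal F}}|\big/(|T_1|\,|B_{r_n}^{\mathcal F}|)\to 1$ \emph{provided} the leaf has Ricci curvature bounded below. But here is the gap: the proposition does \emph{not} assume $M$ compact, so you cannot invoke uniform curvature bounds on the leaves. Without a lower Ricci bound the claim ``$|B_{r-\epsilon}^{\mathcal F}(x)|/|B_r^{\mathcal F}(x)|\to 1$ uniformly in $r$ as $\epsilon\to 0$'' can fail; for a volume profile $F(r)=e^{r^2}$ and $\epsilon_n=1/r_n$ one has $F(r_n-\epsilon_n)/F(r_n)\to e^{-2}\ne 1$. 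In the paper's application $M$ is the compact $3$-manifold of Theorem~\ref{mainthm:2}, so your argument is valid there, but as a proof of the proposition in the stated generality the Bishop--Gromov step imports an extra hypothesis. What your approach buys, when that hypothesis is available, is a clean multiplicative error $c_n\to 1$ between the two averages, which makes the passage to $\limsup$ and $\liminf$ transparent; the paper's bare sandwich between the averages at radii $r_n$ and $r_n-R_1$ is shorter but leaves that last step less explicit.
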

\begin{proof}
We first note that for any $r\ge R_1$,
\begin{equation}\label{eq:1119a}
B_{r - R_1}^{\mathcal{F}}(x)\times T_1 \times \{0\} \subset   B_{r}^{\widetilde{\mathcal{F}}}(x,0,0) \subset B_{r}^{\mathcal{F}}(x)\times T_1 \times \{0\}. 
\end{equation}
The second inclusion is obvious, and the first inclusion holds because if $(y,\theta_1, 0)\in B_{r - R_1}^{\mathcal{F}}(x)\times T_1 \times \{0\}$, then it follows from the product structure of $g\times g_1\times g_2$ that
\[
d\left((y,\theta_1, 0) , (x,0,0)\right) ^2\le  d(y,x)^2 + d(\theta _1,0) ^2\le (r-R_1)^2 + R_1^2\le r^2.
\]
Consequently, we get
\[
\vert B_{r - R_1}^{\mathcal{F}}(x)\vert \cdot \vert  T_1 \vert \leq \vert B_{r}^{\widetilde{\mathcal{F}}}(x,0,0)\vert \leq \vert B_{r}^{\mathcal{F}}(x)\vert \cdot \vert  T_1 \vert .
\]

Fix $x\in M$ and consider  a sequence $\{r_n\}_{n\in\mathbb N}$ satisfying $\lim_{n\to\infty} r_n=\infty$ and $ B_{r_n}^{\mathcal{F}}(x) \setminus B_{r_n - R_1}^{\mathcal{F}}(x) \subset \varphi^{-1}(0).$
Then, it follows from \eqref{eq:1119a} that 
\[
\vert  T_1 \vert \, \int_{B_{r_n -R_1}^{\mathcal{F}}(x)} \varphi(y) \, dy = \int_{B_{r_n}^{\widetilde{\mathcal{F}}}(x,0,0)} \widetilde{\varphi}(p) \, dp = \vert  T_1 \vert \, \int_{B_{r_n}^{\mathcal{F}}(x)} \varphi(y) \, dy.
\]
Combining it with the above estimate, we get
\[
\dfrac{\int_{B_{r_n}^{\mathcal{F}}(x)} \varphi(y) \, dy}{\vert B_{r_n}^{\mathcal{F}}(x) \vert} 
 \leq \dfrac{\int_{B_{r_n}^{\widetilde{\mathcal{F}}}(x,0,0)} \widetilde{\varphi}(p) \, dp}{\vert B_{r_n}^{\widetilde{\mathcal{F}}}(x,0,0)\vert}
  \leq \dfrac{ \int_{B_{r_n -R_1}^{\mathcal{F}}(x)} \varphi(y) \, dy}{\vert B_{r_n - R_1}^{\mathcal{F}}(x)\vert}.
\]
Therefore, the claim immediately follows.
\end{proof}

The following is an immediate consequence of Proposition \ref{prop:Tk}. Notice that for arbitrary  $R_1>0$, 
one can find a compact Riemannian manifold $(T_1,g_1)$ (of arbitrary dimension)
 whose diameter is $R_1$.
\begin{corollary}\label{cor:Tk}
Let $M, \mathcal F, \varphi$ be as in Proposition \ref{prop:Tk}. Assume that  there are an open set $U\subset M$, $R_1>0$ and sequences $\{r_n^+ \}_{n\in\mathbb N}$, $\{r_n^-\}_{n\in\mathbb N}$ with $\lim_{n\to\infty} r_n^\pm =\infty$ such that for any $x\in U$, it holds that $B_{r_n^\pm}^{\mathcal{F}}(x) \setminus B_{r_n^\pm - R_1}^{\mathcal{F}}(x) \subset \varphi^{-1}(0)$ and
 \[
\lim_{n\to\infty}\dfrac{1}{\vert B_{r_n^+}^{\mathcal{F}}(x)\vert } \int_{B_{r_n^+}^{\mathcal{F}}(x)} \varphi(y) \, dy > \lim_{n\to\infty}\dfrac{1}{\vert B_{r_n^-}^{\mathcal{F}}(x)\vert } \int_{B_{r_n^-}^{\mathcal{F}}(x)} \varphi(y) \, dy.
\]
Then, one can find extensions $ \widetilde M, \widetilde{\mathcal F}, \widetilde \varphi $ in the sense of Proposition \ref{prop:Tk} such that,
for any $(x,\theta _1, \theta _2)$ in the open set $U\times T_1\times T_2$,
the length average of $\widetilde{\varphi}$ at $(x,\theta_1,\theta_2) $ does not exist.
\end{corollary}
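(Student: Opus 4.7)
The plan is to derive Corollary \ref{cor:Tk} essentially as a direct consequence of Proposition \ref{prop:Tk}, after one small observation about base-point independence in the proof of that proposition. First I would fix compact Riemannian manifolds $(T_1,g_1)$ of dimension $d'-l'$ with diameter exactly $R_1$ (which exists for any prescribed $R_1>0$ via rescaling) and $(T_2,g_2)$ of dimension $l'$, and then form $\widetilde M=M\times T_1\times T_2$, the foliation $\widetilde{\mathcal F}$ and the function $\widetilde\varphi$ as in the statement of Proposition \ref{prop:Tk}.

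The one step that needs a remark is that Proposition \ref{prop:Tk} is stated only at the base point $(x,0,0)$, while the corollary asks for non-existence of length averages at every $(x,\theta_1,\theta_2)\in U\times T_1\times T_2$. I would observe that the only use of the base point in the proof of Proposition \ref{prop:Tk} is through the sandwich
\[
B_{r-R_1}^{\mathcal F}(x)\times T_1\times\{\theta_2\}\;\subset\; B_r^{\widetilde{\mathcal F}}(x,\theta_1,\theta_2)\;\subset\; B_r^{\mathcal F}(x)\times T_1\times\{\theta_2\},
\]
and both inclusions hold verbatim at an arbitrary $(\theta_1,\theta_2)$: the first follows from the product-metric estimate $d((y,\theta_1',\theta_2),(x,\theta_1,\theta_2))^2\le d(y,x)^2+d(\theta_1,\theta_1')^2\le (r-R_1)^2+R_1^2\le r^2$ together with $d(\theta_1,\theta_1')\le\mathrm{diam}(T_1)=R_1$, and the second from the product structure. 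Consequently the whole proof of Proposition \ref{prop:Tk} extends unchanged with $(x,0,0)$ replaced by $(x,\theta_1,\theta_2)$.

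Granted this extended form, I would apply it separately along the two subsequences $\{r_n^+\}$ and $\{r_n^-\}$, whose standing hypothesis $B_{r_n^\pm}^{\mathcal F}(x)\setminus B_{r_n^\pm-R_1}^{\mathcal F}(x)\subset \varphi^{-1}(0)$ is exactly what Proposition \ref{prop:Tk} requires on each subsequence. This identifies
\[
\lim_{n\to\infty}\frac{1}{\vert B_{r_n^\pm}^{\widetilde{\mathcal F}}(x,\theta_1,\theta_2)\vert}\int_{B_{r_n^\pm}^{\widetilde{\mathcal F}}(x,\theta_1,\theta_2)}\widetilde\varphi\,dp\;=\;\lim_{n\to\infty}\frac{1}{\vert B_{r_n^\pm}^{\mathcal F}(x)\vert}\int_{B_{r_n^\pm}^{\mathcal F}(x)}\varphi\,dy,
\]
and the right-hand sides for $+$ and $-$ differ by hypothesis for every $x\in U$. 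Hence the full limit as $r\to\infty$ of the length average of $\widetilde\varphi$ along the leaf of $\widetilde{\mathcal F}$ through $(x,\theta_1,\theta_2)$ cannot exist, for any $(x,\theta_1,\theta_2)\in U\times T_1\times T_2$.

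The only genuinely new input is the base-point-independence observation; once it is noted, everything else is direct bookkeeping from Proposition \ref{prop:Tk} applied to the two given subsequences, and I expect no serious obstacle.
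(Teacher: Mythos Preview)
Your proposal is correct and follows the same route as the paper, which simply declares the corollary ``an immediate consequence of Proposition \ref{prop:Tk}'' after noting that for any prescribed $R_1>0$ one can choose $(T_1,g_1)$ of the required dimension and diameter. The only thing you add is the explicit verification that the sandwich inclusions in the proof of Proposition \ref{prop:Tk} hold at any base point $(x,\theta_1,\theta_2)$ rather than just $(x,0,0)$; the paper leaves this implicit, so your write-up is a faithful (and slightly more careful) elaboration of the same argument.
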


\begin{proof}[Proof of Corollary \ref{cor:y}]
In order to prove Corollary \ref{cor:y}, we apply Corollary \ref{cor:Tk} to the codimension one foliation $\mathcal F$ on the 3-manifold $M$ given in Theorem \ref{mainthm:2} (constructed in the previous subsections of this section), so that, with $\mathbf d:=3+d'$ and $\mathbf l:=1+l'$, the resulting manifold $\widetilde M$ (obtained in the manner of Corollary \ref{cor:Tk}) is $\mathbf d$-dimensional and 
the resulting foliation $\widetilde{\mathcal F}$ is condimension $\mathbf l$.
By $0\le l'\le d'$, we have that $\mathbf d\ge 3$ and $1\le \mathbf l \le 1+d' = \mathbf d-2$, as required in Corollary \ref{cor:y}.
Therefore, it suffices to check that 
\begin{equation}\label{eq:1119d}
B_{r_n^\pm}^{\mathcal{F}}(x) \setminus B_{r_n^\pm - R_1}^{\mathcal{F}}(x) \subset \varphi^{-1}(0) \quad \text{ for any $x\in U$} 
\end{equation}
with
\begin{itemize}
\item $U=h_{A,+} \circ h^{-1}((\Delta _*+1,2\Delta _*))\times J$ (see \eqref{eq:1119b});
\item $r_n^+=2kL$ and $r_n^-=2kL-2\Delta _*$ (see Proposition \ref{prop:average 1});
\item $\varphi = \Psi$ (see \eqref{eq:1119c}).
\end{itemize} 
On the other hand, since $\Phi=0$ on $h^{-1}([2(k-1)L+1,2kL])$ and $5\Delta _* <L$ (see Proposition \ref{prop:average 1}), the required property \eqref{eq:1119d} holds for any $R_1 <\Delta _*$. 
  This completes the proof of Corollary \ref{cor:y}.
\end{proof}

\subsection{Proof of Proposition \ref{prop:2.6}}
We first note that any point $x\in \mathbb T^d$ is not periodic, that is, $f(t)(x)\neq x$ for any $t\in \mathbb R^l\setminus\{0\}$ by the hypothesis of Proposition \ref{prop:2.6} because $f(t)(x)=x+t\alpha \mod \mathbb Z^d$ with $t\alpha:=\sum_{j=1}^l t_j\alpha^{(j)}$.
By construction, this immediately implies the first assertion, so it suffices to show that $\beta_r\delta_x:=\frac{1}{\vert B_r\vert}\int_{B_r}\delta_{f(t)(x)}dt$ weakly converges to the Lebesgue measure for every $x\in\mathbb T^d$.

Next, notice that the Lebesgue measure is an invariant measure of $f$ by the translation invariant property of the Lebesgue measure, and any measure except the Lebesgue measure cannot be an invariant measure of $f$ due to the fact that some $\alpha ^{(j)}$ is irrational and the well-known fact that the Lebesgue measure is a unique invariant probability measure for any irrational flow (cf.~\cite[Propositions 4.2.2 and 4.2.3]{KH1995}). Hence, the Lebesgue measure is the unique invariant probability measure of $f$.

By the compactness of the space of all Borel probability measures on a compact metric space with respect to the weak topology, for any $x\in\mathbb T^d$, 
$\{\beta_r\delta _x\}_{r>0}$ has a convergent subsequence $\{\beta_{r_n}\delta _x\}_{n\in\mathbb N}$.
Denoting the limit of the subsequence by $\mu_x$, we get that for any $t\in\mathbb R^l$,
\[
\mu _x(f(t)^{-1}(A))=\lim_{n\to\infty}(\beta_{r_n}\delta _{f(t)(x)})(A)
=\lim_{n\to\infty}\frac{1}{\vert B_{r_n}\vert}\int_{B_{r_n}}\delta_{f(s)\circ f(t)(x)}(A)ds.
\]
Note that $f(s)\circ f(t)=f(s+t)$, thus with notations $a_s:=\delta_{f(s)(x)}(A)$ and $B_{r_n}(t):=\{s+t : s\in B_{r_n}\}$, 
\[
\int_{B_{r_n}}\delta_{f(s)\circ f(t)(x)}(A)ds
=
\int_{B_{r_n}}a_s\; ds
+
\int_{B_{r_n}(t)\setminus B_{r_n}}a_s\; ds
-
\int_{B_{r_n}\setminus B_{r_n}(t)}a_s\; ds,
\]
while
\[
\left\vert\frac{1}{\vert B_{r_n}\vert}\int_{B_{r_n}(t)\setminus B_{r_n}}a_s\; ds\right\vert \le \frac{\vert B_{r_n}(t)\setminus B_{r_n}\vert }{\vert B_{r_n}\vert}\to 0 \quad \text{as $r_n\to \infty$},
\]
and similarly $\frac{1}{\vert B_{r_n}\vert}\int_{B_{r_n}\setminus B_{r_n}(t)}a_s\; ds \to 0$.
Hence,
\[
\mu _x(f(t)^{-1}(A))=\lim_{n\to\infty}\frac{1}{\vert B_{r_n}\vert}\int_{B_{r_n}}\delta_{f(s)(x)}(A)ds=\mu _x(A).
\]
Namely, $\mu _x$ is an $f$-invariant probability measure.
Since the Lebesgue measure is a unique invariant probability measure, $\mu_x$ is the Lebesgue measure, and this completes the proof of Proposition \ref{prop:2.6}.

\section*{Acknowledgement}
YN and TY are grateful to the members of University of Porto for their warm hospitality.
PV acknowledges financial support from Funda\c c\~ao para a Ci\^encia e Tecnologia (FCT) - Portugal through the grant CEECIND/03721/2017 of the Stimulus of Scientific Employment, Individual Support 2017 Call and by CMUP, under the project UIDB/00144/2020. 
This work was also partially supported by JSPS KAKENHI Grant  Numbers 19K14575, 20K03583, 22K03302, 23K03188, and 24K06733.

\bibliographystyle{abbrv}
\bibliography{ANYV}

\begin{thebibliography}{10}

\bibitem{BBTB2011}
P.~J.~K. Bruce, D.~M.~F. Burton, N.~A. Titchener, and H.~Babinsky.
\newblock Corner effect and separation in transonic channel flows.
\newblock {\em Journal of Fluid Mechanics}, 679:247--262, 2011.

\bibitem{bufetov2002convergence}
A.~I. Bufetov.
\newblock Convergence of spherical averages for actions of free groups.
\newblock {\em Annals of mathematics}, 155(3):929--944, 2002.

\bibitem{candel2000foliations}
A.~Candel and L.~Conlon.
\newblock {\em {Foliations I}}, volume~23.
\newblock American Mathematical Society, Providence, RI, 2000.

\bibitem{candel2000foliationsb}
A.~Candel and L.~Conlon.
\newblock {\em {Foliations II}}, volume~60.
\newblock American Mathematical Society, Providence, RI, 2003.

\bibitem{CCSV2021}
M.~Carvalho, V.~Coelho, L.~Salgado, and P.~Varandas.
\newblock {Sensitivity and historic behavior for continuous maps on Baire
  metric spaces}.
\newblock {\em Ergodic Theory and Dynamical Systems}, pages 1--30, 2021.

\bibitem{Du}
G.~Duminy.
\newblock {L'invariant de Godbillon--Vey d'un feuilletage se localise dans les
  feuilles ressort}.
\newblock {\em unpublished preprint}, 1982.

\bibitem{MR182537}
C.~F. Gauss.
\newblock {\em General investigations of curved surfaces}.
\newblock Raven Press, Hewlett, N.Y., 1965.
\newblock Translated from the Latin and German by Adam Hiltebeitel and James
  Moreh ead.

\bibitem{GLW}
E.~Ghys, R.~Langevin, and P.~Walczak.
\newblock Entropie geometrique des feuilletages.
\newblock {\em Acta Mathematica}, 160:105--142, 1988.

\bibitem{Hu1}
S.~Hurder.
\newblock Classifying foliations.
\newblock In {\em Foliations, Geometry, and Topology}, volume 498, pages 1--65.
  2009.

\bibitem{Hu2}
S.~Hurder.
\newblock {Lectures on Foliation Dynamics}.
\newblock In {\em Foliations: Dynamics, Geometry and Topology}, pages 87--149.
  2014.

\bibitem{Joyce2012}
D.~Joyce.
\newblock On manifolds with corners.
\newblock In {\em Advances in geometric analysis}, volume~21 of {\em Adv. Lect.
  Math. (ALM)}, pages 225--258. Int. Press, Somerville, MA, 2012.

\bibitem{KH1995}
A.~Katok, A.~Katok, and B.~Hasselblatt.
\newblock {\em Introduction to the modern theory of dynamical systems}.
\newblock Number~54. Cambridge university press, 1995.

\bibitem{KN1976}
H.~B. Keynes and D.~Newton.
\newblock A ``minimal'', non-uniquely ergodic interval exchange transformation.
\newblock {\em Mathematische Zeitschrift}, 148:101--105, 1976.

\bibitem{klingenberg1995riemannian}
W.~Klingenberg.
\newblock {\em Riemannian geometry}, volume~1.
\newblock Walter de Gruyter, 1995.

\bibitem{lindenstrauss2001pointwise}
E.~Lindenstrauss.
\newblock Pointwise theorems for amenable groups.
\newblock {\em Inventiones mathematicae}, 146(2):259--295, 2001.

\bibitem{nakano2019existence}
Y.~Nakano and T.~Yokoyama.
\newblock Existence and non-existence of length averages for foliations.
\newblock {\em Communications in Mathematical Physics}, 372(2):367--383, 2019.

\bibitem{nevo1994generalization}
A.~Nevo and E.~M. Stein.
\newblock {A generalization of Birkhoff's pointwise ergodic theorem}.
\newblock {\em Acta Mathematica}, 173(1):135--154, 1994.

\bibitem{sacksteder1965foliations}
R.~Sacksteder.
\newblock Foliations and pseudogroups.
\newblock {\em American journal of Mathematics}, 87(1):79--102, 1965.

\bibitem{Shankar2005}
P.~N. Shankar.
\newblock Moffatt eddies in the cone.
\newblock {\em Journal of Fluid Mechanics}, 539:113--135, 2005.

\bibitem{Stefan74}
P.~Stefan.
\newblock Accessibility and foliations with singularities.
\newblock 1974.

\bibitem{Wal}
P.~Walczak.
\newblock {\em Dynamics of foliations, groups and pseudogroups}, volume~64.
\newblock Springer Basel AG, 2004.

\bibitem{Yoccoz2010}
J.-C. Yoccoz.
\newblock Interval exchange maps and translation surfaces.
\newblock {\em Homogeneous flows, moduli spaces and arithmetic}, 10:1--69,
  2010.

\end{thebibliography}

\end{document}